\newtheorem{lem}{Lemma}[section]
\newtheorem{prop}[lem]{Proposition}
\newtheorem{thm}[lem]{Theorem}
\newtheorem{cor}[lem]{Corollary}
\newtheorem{theorem}{Theorem}
\newtheorem*{theorem*}{Theorem}
\theoremstyle{definition}
\newtheorem{ex}[lem]{Example}
\newtheorem{rem}[lem]{Remark}
\newcommand{\free}[1]{\langle #1 \rangle}
\newcommand{\mb}{\mathbb}
\newcommand{\mc}{\mathcal}
\newcommand{\mf}{\mathfrak}
\newcommand{\GAP}{\textsf{GAP}}
\newcommand{\s}{\subseteq}
\newcommand{\ov}{\overline}
\renewcommand{\le}{\leqslant}
\renewcommand{\ge}{\geqslant}
\DeclareMathOperator{\Alt}{Alt}
\DeclareMathOperator{\Aut}{Aut}
\DeclareMathOperator{\End}{End}
\DeclareMathOperator{\ga}{v}
\DeclareMathOperator{\GL}{GL}
\DeclareMathOperator{\gr}{gr}
\DeclareMathOperator{\Hol}{Hol}
\DeclareMathOperator{\id}{id}
\DeclareMathOperator{\Img}{Im}
\DeclareMathOperator{\Ker}{Ker}
\DeclareMathOperator{\lcm}{lcm}
\DeclareMathOperator{\op}{op}
\DeclareMathOperator{\Soc}{Soc}
\DeclareMathOperator{\Sym}{Sym}
\author[I. Colazzo \and E. Jespers \and {\L}. Kubat \and A. Van Antwerpen]
{Ilaria  Colazzo \and Eric Jespers \and {\L}ukasz Kubat \and Arne Van Antwerpen}
\address[I. Colazzo (ORCID: 0000-0002-2713-0409)]{School of Mathematics, University of Leeds, Leeds LS2 9JT, UK \&
Department of Mathematics and Data Science, Vrije Universiteit Brussel, Pleinlaan 2, 1050 Brussel}
\email{I.Colazzo@leeds.ac.uk}
\address[E. Jespers (ORCID: 0000-0002-2695-7949)]
{Department of Mathematics and Data Science, Vrije Universiteit Brussel, Pleinlaan 2, 1050 Brussel}
\email{Eric.Jespers@vub.be}
\address[{\L}. Kubat (ORCID: 0000-0002-7848-6405)]{University of Warsaw, Institute of Mathematics, Banacha 2, 02-097 Warsaw, Poland}
\email{Lukasz.Kubat@mimuw.edu.pl}
\address[A. Van Antwerpen (ORCID: 0000-0001-7619-6298)]
{Department of Mathematics and Statistics, National University of Ireland - Maynooth, Maynooth, Ireland}
\email{Arne.VanAntwerpen@mu.ie}
\title{Simple solutions of the Yang--Baxter equation}
\subjclass[2020]{primary: 16T25; Secondary: 20N99}
\keywords{Yang--Baxter equation, set-theoretic solution, simple solution, skew left brace, rack, quandle.}
\date{}
\begin{document}

\begin{abstract}
    We study  simple set-theoretic solutions of the Yang--Baxter equation that are finite and non-degenerate. Such retractable
    solutions are fully described and to investigate the irretracble solutions we give a new algebraic method. Our approach includes
    and extends the work of Joyce for quandles and Castelli for involutive solutions, demonstrating that the simplicity of a solution
    can be understood through its associated permutation skew left brace. In particular, we show that this skew left brace must have
    the smallest non-zero ideal, and the quotient by this ideal gives a trivial skew left brace of cyclic type; clearly all simple skew
    left braces satisfy these assumptions. As an application of our approach we construct and characterise  new infinite families of
    simple solutions that are neither involutive nor quandles. Additionally, we show that our method can be applied to simple skew left
    braces to generate further families of simple solutions.
\end{abstract}

\maketitle

\section{Introduction}

The Yang--Baxter equation has its origins in the works of Yang \cite{Ya} and Baxter \cite{Ba}. It has significant implications not
only in mathematical physics but also in various branches of pure mathematics, including quantum groups, Hopf algebras, and knot theory
\cite{BG,KasselBook,Manin,MR0901157,MR0638121}. A comprehensive description of all solutions of the Yang--Baxter equation remains
currently beyond reach. An interesting family of solutions consists of those where the solution acts on a basis. Drinfeld referred
to such solutions as set-theoretic in \cite{Dri1992}. A \emph{set-theoretic solution of the Yang--Baxter equation} is a pair $(X,r)$,
where $X$ is a non-empty set and $r\colon X\times X\to X\times X$ is a map satisfying the \emph{braid equation}
$(r\times{\id})({\id}\times r)(r\times{\id})=({\id}\times r)(r\times {\id})({\id}\times r)$ on $X\times X\times X$. The class of
non-degenerate set-theoretic solutions holds particular importance. As usual we write \[r(x,y)=(\lambda_x(y),\rho_y(x))\] and thus
non-degeneracy means that each $\lambda_x$ and $\rho_y$ is a bijection. Recall that by \cite[Theorem 3.1]{CoJeVAVe21x} non-degeneraracy
implies that $r$ is necessarily a bijective map provided $X$ is finite. Recently, Jedli\v{c}ka and Pilitowska
\cite[Corollary 3.4]{diagonals} have shown that any non-degenerate solution is bijective. Hence, for simplicity, we refer
to non-degenerate bijective solutions simply as non-degenerate solutions. Pioneer works \cite{GVdB,ESS99} brought
algebraic tools to study a subclass of these solutions, the non-degenerate set-theoretic solutions that are \emph{involutive}, i.e., also
$r^2=\id$. Later in \cite{LYZ00,MR1809284} these methods were extended to the class of all non-degenerate solutions. Given a non-degenerate
set-theoretic solution of the Yang--Baxter $(X,r)$, the \emph{(left) derived solution} of $(X,r)$, for which we will use the suggestive
notation $(X,r')$, is given by \[r'(x,y)=(y,\sigma_y(x)),\quad\text{where}\quad\sigma_y(x)=\lambda_y\rho_{\lambda_x^{-1}(y)}(x).\]
It is well-known that $(X,r')$ also is a non-degenerate set-theoretic solution, see for instance
\cite[Proposition 5.7]{MR3558231}. It turns out that the derived solution plays a crucial role in the study of
non-degenerate solutions, for example, being involutive or being injective for a non-degenerate
solution can be read from its derived solution \cite{MR1809284,MR3974961}. It is easy to verify
that $X$ with respect to the binary operation $\triangleleft$ defined by $x\triangleleft y=\sigma_y(x)$ is a rack,
i.e., $\sigma_y$ is bijective for all $y\in X$ and for any $x,y,z\in X$ it holds
$(x\triangleleft y)\triangleleft z=(x\triangleleft z)\triangleleft (y\triangleleft z)$. A rack $(X,\triangleleft)$
is said to be a \emph{quandle} if $x \triangleleft x=x$ for each $x\in X$. Any group $B$ with the operation given
by $g\triangleleft h=h^{-1}gh$ defines a rack operation on $B$, which is clearly a quandle operation. Every subrack
of such a quandle is called \emph{conjugation quandle}. On the other hand, a rack $(X,\triangleleft)$ defines
a set-theoretic solution of the Yang--Baxter equation by $r(x,y)=(y,x\triangleleft y)$. This relation between
solutions of the Yang--Baxter equation and racks has been noticed by Brieskorn in \cite{MR0975077}. The notion
of racks has been considered in the literature, mainly as a way to produce knot invariants
\cite{MR0638121,MR1778150,MR0672410,MR0975077} and in the study of Nichols algebras over a group \cite{MR1994219}.

A characterisation of arbitrary finite non-degenerate solutions is presently beyond reach. Hence one needs to restrict
the investigations to building blocks of such solutions. Of course a first and obvious restriction is to deal with
indecomposable solutions. Recall that a solution $(X,r)$  is said to be \emph{decomposable} if $X=Y\cup Z$, a disjoint
union of non-empty subsets, and $r$ restricted to $Y\times Y$ and $Z\times Z$ respectively defines a solution on $Y$ and $Z$. 

Another obvious restriction is to simple solutions, i.e., a solution $(X,r)$ of which any epimorphic image is either isomorphic
to $(X,r)$ or to a solution $(Y,s)$ with $Y$ a singleton. Clearly any finite non-degenerate solution has an epimorphic image that is simple. 

We note that
a non-degenerate solution $(X,r)$ with $X$ of cardinality $2$ is simple. Moreover, if the cardinality of $X$ is at least $3$  and $(X,r)$ is
simple then the solution $(X,r)$ is also indecomposable. Simple solutions are the smallest non-trivial solutions and thus are the
building blocks of all solutions. Note that the only simple decomposable solution $(X,r)$  is the flip map on two elements,
i.e., $r(x,y)=(y,x)$, therefore throughout the paper we always assume that $|X|\ge 3$ and thus a simple solution is indecomposable.
The definition
of simple non-degenerate derived solutions appeared in \cite{MR0682881}, where a group-theoretic characterisation of simple
quandles has been obtained. The definition of simple solution seems to have appeared first in \cite{MR1994219}, where the
classification of simple racks has been improved given a concrete characterisation of which groups give rise to a simple rack.
Concerning involutive non-degenerate solutions, in \cite{MR3437282}, Vendramin introduced a notion of simple involutive
non-degenerate solutions (using the language of cycle sets). This definition of simplicity, however, coincides with the original
one only within the class of finite indecomposable non-degenerate involutive solutions. Ced\'o  and Okni\'nski \cite{CeOkn,MR4161288}
(see also \cite{MR4300920}) constructed finite simple non-degenerate involutive solutions. Recently in \cite{CO2022} they proved
for a finite simple non-degenerate involutive solution $(X,r)$ that if $|X|$ is not a prime power then $|X|$ cannot be square-free.
Moreover, if $|X|$ is a prime power $p^n$ then $n=1$, and there exist simple solutions of order $p$ and there is only one involutive
up to isomorphism. In \cite{MR0682881} Joyce described simplicity for quandles in a group theoretic setting  and Castelli,
in \cite{Cast22}, characterised the simplicity of non-degenerate involutive solutions in the algebraic setting of skew left braces. 

Recall that Rump \cite{Rump2007} and Guarnieri and Vendramin \cite{GV17} introduced skew left braces to provide an algebraic
framework for studying arbitrary non-degenerate set-theoretic solutions of the Yang--Baxter equation. A \emph{skew
left brace} is a triple $(B,+,\circ)$ such that both $(B,+)$ and $(B,\circ)$ are groups and \[a\circ(b+c)=a\circ b-a+a\circ c\] holds
for all $a,b,c\in B$. The theory of Hopf--Galois extensions suggests to use the following terminology based on properties of the additive
group. One says that a skew left brace $B$ is of \emph{abelian type} if $(B,+)$ is an abelian group (Rump in \cite{Rump2007}
simply names it a \emph{left brace}) and of \emph{cyclic type} if $(B,+)$ is a cyclic group. Since their introduction, skew left
braces have been widely studied \cite{MR3974961,MR4300920,MR4399130,MR3763907}, and recently are popularised in \cite{whatis}.
Every skew left brace $B$ naturally provides a set-theoretic solution $(B,r_B)$ of the Yang--Baxter equation that is non-degenerate
and skew left braces of abelian type provide involutive solutions. Specifically, $r_B\colon B\times B\to B\times  B$ is given by
\[r_B(a,b)=(-a+a\circ b, (-a+a\circ b)^{-1}\circ a\circ b),\] where $c^{-1}$ denotes the inverse of $c\in B$ in the group $(B,\circ)$.
Two fundamental skew left braces are associated with a finite non-degenerate solution $(X,r)$: the \emph{structure skew left brace}
$G(X,r)$ and the \emph{permutation skew left brace} $\mc{G}(X,r)$. In the next section we recall rigorous definitions of these
structures. Let us just mention that if the solution $(X,r)$ is finite so is the permutation skew left brace. Moreover,
if two non-degenerate solutions $(X,r)$ and $(Y,s)$ are isomorphic, then we have $G(X,r)\cong G(Y,s)$ as skew left braces,
as well as the permutation groups $\mc{G}(X,r)\cong \mc{G}(Y,s)$. But the converse is not true. Given a finite skew
left brace $B$, Bachiller, Cedó, and Jespers \cite{B2018,BCJ2016} provided a construction of all finite non-degenerate 
set-theoretic solutions $(X,r)$ such that $\mc{G}(X,r)$ and $ B$ are isomorphic as skew left braces. Providing more evidence that
skew left braces are the right algebraic tool to investigate all non-degenerate solutions of the Yang--Baxter equation.
Within this framework, there is the need to investigate its building blocks, the simple skew left braces, i.e., skew left braces
with no non-trivial homomorphic images. Note that in contrast with finite simple groups, it has been shown \cite{MR4161288}
that there is an abundance of finite simple skew left braces of abelian type \cite{MR4161288,MR4122077}. Moreover, more simple
skew left braces of abelian type have been constructed in \cite{MR3812099}. Only recently, Byott \cite{Byott-IP} constructed a
family of finite simple skew left braces that are not of abelian type. The first instances of this family can be found in the
\GAP\ package \emph{YangBaxter} \cite{YangBaxter}, i.e., \texttt{SmallSkewbrace(12,22)} and \texttt{SmallSkewbrace(12,23)}.

Okni\'nski and Ced\'o \cite{CeOkn} proved that some finite simple skew left braces of abelian type determine an involutive
simple solution $(X,r)$. They showed that finite simple braces which are additively generated by an orbit $X$ under the action
of the permutation skew left brace $\mc{G}(X,r)$ yield simple non-degenerate involutive solutions on $X$. This connection
between involutive simple solutions and simple skew left braces of abelian type recently has been refined by Castelli in
\cite{Cast22} by showing that the simplicity of a finite non-degenerate involutive solution $(X,r)$ is described by the
algebraic structure of its permutation skew left brace.

Problem 10, posed by Vendramin in \cite{MR3974481}, asks whether the simplicity of an involutive solution $(X,r)$ can be read
in terms of the left brace $G(X,r)$ and expresses interest in developing the theory of non-involutive simple solutions,
particularly in understanding how simplicity can be reflected in the skew left brace where the solution is realised.

In this paper, we address this problem by developing a new algebraic tool to describe finite simple non-degenerate solutions
to the Yang-–Baxter equation. Our approach provides a comprehensive solution, offering a framework that resolves the issue of
reflecting simplicity in skew left braces and extends existing results. While similar work has been done for specific subclasses
of solutions, such as quandles and involutive solutions, through the works of Joyce \cite{MR0638121} and Castelli \cite{Cast22},
our method not only recovers these approaches but also advances them. We also apply our method to construct new infinite families
of simple solutions.

The first step is to identify two quite distinct natural families of simple non-degenerate finite  solutions:
\begin{itemize}
    \item \textbf{Lyubashenko type solutions}, which are fully classified using combinatorial methods, see Section~\ref{secL}, and
    \item solutions that arise as \textbf{subsolutions of those associated with skew left braces} that have a clear algebraic structure,
    see part (2) in the following theorem.
\end{itemize}

This can be summarised as follows: we use the natural action of a skew left brace $B$ on itself, i.e., the action with respect
to the group generated by the maps $\lambda_a$ and $\sigma_a$ with $a\in B$. Such an action can be restricted to ideals and subsets
of $B$ that define subsolutions. The formulation also makes use of the ideal $B^{(2)}=B*B$, i.e., the additive subgroup of $B$
generated by all the elements $a*b=-a+a\circ b-b$ for $a,b\in B$, and the ideal $B^{(3)}=B^{(2)}*B$, that is the additive subgroup
of $B$ generated by all elements of the form $x*y$ with $x\in B^{(2)}$ and $y\in B$.

\begin{theorem}\label{CorSummary}
    A finite non-degenerate solution $(X,r)$ of the Yang--Baxter equation is simple if and only if it is one of the following types:
    \begin{enumerate}
     \item it is retractable and a Lyuabashenko solution,
     \item it is irretractable and it  is embedded into the associated solution $(B,r_B)$ of a finite skew left brace $B$ with the following properties:
        \begin{itemize}
            \item $B$ is additively generated by $X$,
            \item $B$ has the smallest non-zero ideal, say $V$ (and it turns out that $V$ is generated, as an ideal, by the elements $x-y$ with $x,y\in X)$,
            \item $B/V$ is a trivial skew left brace of cyclic type, 
            \item the action of the ideal $V$ on $X$ is transitive.
        \end{itemize}
            In particular, either $B^{(2)}=0$ and thus $V=[B,B]$, or $B^{(2)}\ne 0$ and $B^{(3)}=0$ and thus $V=B^{(2)}$, or $B^{(2)}=B^{(3)}\ne 0$.
    \end{enumerate}
\end{theorem}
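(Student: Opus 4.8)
The plan is to argue by cases according to whether $(X,r)$ is retractable; recall that for $|X|\ge 3$ a simple solution is automatically indecomposable, and the two cases call for genuinely different techniques. In the retractable case the assertion is that a retractable simple solution is exactly a Lyubashenko solution, in both directions, and this is combinatorial: it is carried out in Section~\ref{secL}, where one analyses the retract of $(X,r)$ — again simple, or a one-point solution — and shows that simplicity forces the maps $\lambda_x,\rho_y$ into Lyubashenko form, while conversely a direct check gives that every finite non-degenerate Lyubashenko solution is simple. So, granting the classification of Section~\ref{secL}, the retractable half of the theorem is immediate, and we concentrate on the irretractable case.

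For the irretractable ``only if'' direction the idea is to realise $(X,r)$ inside a skew left brace and to transfer simplicity of the solution to the ideal lattice of the brace. First I would embed $(X,r)$ as a subsolution of $(B,r_B)$ for a finite skew left brace $B$ additively generated by (the image of) $X$; the natural candidate is a sub-skew-left-brace of the permutation skew left brace $\mc{G}(X,r)$, irretractability being precisely what makes the canonical map $X\to\mc{G}(X,r)$ injective and a morphism of solutions onto a subsolution. \textbf{I expect the main obstacle here:} arranging that $B$ may be taken additively generated by $X$ — e.g.\ that the additive span of $X$ in $\mc{G}(X,r)$ is already $\circ$-closed, or a workable substitute — is the technical core of the method and is the analogue, for arbitrary solutions, of what Ced\'o and Okni\'nski and Castelli establish in the involutive case. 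The second ingredient is a correspondence between quotients of $(X,r)$ and a sublattice of ideals of $B$: a non-trivial congruence $\theta$ on $(X,r)$ produces a non-zero ideal $I_\theta$ of $B$ (generated by the differences $x-y$ with $x\sim_\theta y$) whose trace back on $X$ recovers $\theta$, the full congruence being matched with $B$ itself.

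Granting these, simplicity of $(X,r)$ dictates the structure of $B$. Since $(X,r)$ has no proper non-trivial quotient, $B$ has a smallest non-zero ideal $V$ — the ideal attached to the congruence collapsing all of $X$ — and since $X$ generates $B$ additively, $V$ is generated, as an ideal, by the elements $x-y$ with $x,y\in X$. Indecomposability makes $X$ a single orbit of the natural $B$-action while modulo $V$ the points of $X$ are all identified, so $V$ already acts transitively on $X$. Finally $B/V$ is additively generated by the common image $\bar x$ of $X$, hence $(B/V,+)$ is cyclic with generator $\bar x$; as $\lambda_{\bar x}\in\Aut(B/V,+)$ fixes $\bar x$ it is the identity, and a short further argument (using that $X$ generates $B$, so $\bar x$ generates $B/V$ multiplicatively as well) gives $\lambda_a=\id$ for all $a$, i.e.\ $a\circ b=a+b$ throughout $B/V$; thus $B/V$ is the trivial skew left brace of cyclic type.

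The trichotomy is then formal: $B^{(2)}=B*B$ and $B^{(3)}=B^{(2)}*B$ are ideals with $B^{(3)}\s B^{(2)}$, and triviality of $B/V$ gives $B^{(2)}\s V$. If $B^{(2)}=0$ then $B$ is a trivial skew left brace, which is not additively abelian (else $(B,r_B)$, hence $(X,r)$, is the flip solution, which is retractable), so $[B,B]$ is a non-zero ideal and, $B/V$ being abelian, $[B,B]\s V\s[B,B]$, i.e.\ $V=[B,B]$. If $B^{(2)}\ne 0$ then $B^{(2)}$ is a non-zero ideal, so $V\s B^{(2)}\s V$ and $V=B^{(2)}$; then $B^{(3)}\s B^{(2)}=V$ is an ideal, hence either $B^{(3)}=0$, or $B^{(3)}\ne 0$ and $V\s B^{(3)}\s B^{(2)}=V$ forces $B^{(3)}=B^{(2)}$ — the three stated alternatives. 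For the converse, a solution of type (2) is simple: a non-trivial congruence on $(X,r)$ would give a non-zero ideal $I$ of $B$, with $V\s I$ by minimality of $V$, hence a quotient of the trivial cyclic brace $B/V$ in which the single image of $X$ would persist as a single point, forcing the congruence to be the full one. Together with Section~\ref{secL} for type (1), this establishes the equivalence.
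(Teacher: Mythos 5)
Your overall architecture is the right one (split by retractability, realise the irretractable case inside the permutation skew left brace, and the purely formal parts at the end — that $B/V$ is trivial of cyclic type and the trichotomy $B^{(2)}=0$, $B^{(3)}=0\ne B^{(2)}$, $B^{(3)}=B^{(2)}\ne 0$ — are handled correctly and essentially as in the paper). But you misplace the difficulty and leave the two substantive steps unproved. The point you flag as the main obstacle, additive generation of the target brace by (the image of) $X$, is automatic: the permutation skew left brace $\mc{G}_{\sigma,\lambda}(X,r)$ is by construction the image of the structure monoid, hence additively and multiplicatively generated by the images of the points of $X$; no $\circ$-closure problem arises. The real work is elsewhere.

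First, the smallest-ideal condition. To show that every non-zero ideal $I$ of $\mc{G}$ contains $V=\mc{D}$ you must show that passing to $\mc{G}/I$ identifies at least two points of $X$; your congruence-to-ideal correspondence goes in the opposite direction, and the clause ``whose trace back on $X$ recovers $\theta$'' is unproved (and not true in general — the ideal generated by the $\theta$-differences may collapse more of $X$ than $\theta$ does). The paper's proof of this step is the heart of Theorem~\ref{simpleNL}: if the induced map on $X$ were injective, any $0\ne a\in I$ would force $\lambda_a=\id$, hence $\sigma_a\ne\id$, and then the additive commutator $[-x,-a]_+\in I$ identifies $x$ with $\sigma_a(x)$, a contradiction. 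Second, transitivity. Your inference ``indecomposable, and the points of $X$ are identified modulo $V$, so $V$ acts transitively on $X$'' is a non sequitur: lying in a single additive coset of $V$ says nothing about the orbits of the maps $\sigma_v,\lambda_v$ with $v\in V$; those orbits are merely blocks for the full action. The paper obtains transitivity by proving (Lemma~\ref{lemideal}) that $\mb{D}=\mc{D}'\rtimes\mc{D}$ is an ideal of $\mc{S}$, so that $X/{\sim_{\mb{D}}}$ is a quotient solution, and then invoking simplicity. Correspondingly, your converse never uses transitivity, yet it is indispensable there: from $V\subseteq I_\theta$ alone you cannot conclude that $\theta$ is the full congruence; the paper's sufficiency argument is precisely that $\mb{D}\cdot x=X$ together with $\mc{D}\subseteq\Ker(\mc{G}(f))$ forces $|Y|=1$. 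A minor further slip: your aside that every finite non-degenerate Lyubashenko solution is simple is false — Section~\ref{secL} shows exactly that only those of prime order $p$ with $\free{\lambda,\rho}$ cyclic of order $p$ are simple, and type (1) of the theorem must be read together with that characterisation.
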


Recall that a non-degenerate solution $(X,r)$ is said to be \emph{Lyubashenko} (see \cite{Dri1992}) if $r(x,y)=(\lambda(y),\rho(x))$
for all $x,y\in X$, where $\lambda$ and $\rho$ are commuting permutations on $X$. Solutions of type (1) are completely characterised in
Section~\ref{secL}, namely we prove that the cardinality of $X$ is a prime number $p$ and the group generated by $\lambda$ and $\rho$ is
cyclic of order $p$.

As for the solutions of type (2) we can naturally identify three subcases:
\begin{enumerate}
    \item $B^{(2)}=0$ and thus $V=[B,B]$,
    \item $B^{(2)}\ne 0$ and $B^{(3)}=0$ and thus $V=B^{(2)}$,
    \item $B^{(2)}=B^{(3)}\ne 0$.
\end{enumerate}

Let us examine these subcases. If $B^{(2)}=0$ this means that the additive and multiplicative structures of the skew braces coincide
and the associated solution is a derived solution. Hence, this reduces to the study of simple quandles, yielding the same classification
obtained by Joyce in \cite{MR0682881} and Andruskiewitsch and Graña in \cite{MR1994219}.

We achieve a complete classification of case (2) in Section~\ref{secTR}. To do so, we need to split the classification into two
cases based on the abelianity of $V$.
\begin{itemize}
    \item If $V$ is abelian we obtain the characterisation in Theorem~\ref{coro1*}. In summary, we prove that $V$ must be an
    elementary abelian $p$-group and that the additive structure of $B$ is given by the semi-direct product of $V$ and a cyclic
    group of order $k$. We also provide a concrete expression for the lambda map of  $B$, which allows us to derive an explicit
    formula for the simple solution obtained from this skew brace.
    \item If $V$ is non-abelian we obtain the characterisation in Theorem~\ref{coro2*}. Here we prove that $V$ must be a finite
    non-abelian characteristically simple group and the additive group of $B$ is the quotient of a semi-direct product of $V$ with
    a cyclic group over the center. In this case as well, we provide an explicit formula for the simple solution obtained from this skew brace. 
\end{itemize}
 
To summarise we have complete control over the case when $B^{(3)}=0$. It is also worth mentioning that all solutions obtained within
this classification are neither involutive nor derived (since we are assuming $B^{(2)}\ne 0$). Hence, they provide an abundance of new
simple solutions complementary to all the work done in the context of involutive simple solutions (see \cite{MR4161288,MR4300920,CeOkn,CO2022}).

Case (3) is open, including simple solutions that have the associated skew brace simple and, more generally, perfect. As one might expect,
we do not have a complete classification. However, our theorem can be applied to existing families of simple skew braces to obtain families
of solutions, as shown in Example~\ref{ex:byott}, where we construct an infinite family of simple solutions based on the family of simple
skew braces constructed by Byott in \cite{Byott-IP}.

\section{Preliminaries}

Let $(X,r)$ be a finite non-degenerate solution and $(X,r')$ its (left) derived solution.
We denote by \[M=M(X,r)=\free{X\mid x\circ y=\lambda_x(y)\circ\rho_y(x)\text{ for all }x,y\in X}\]
and \[A=A(X,r)=\free{X\mid x+y=y+\sigma_y(x)\text{ for all }x,y\in X}\] the \emph{Yang--Baxter monoid}
(also known as structure monoid of $(X,r)$) and the left derived Yang--Baxter monoid of $(X,r)$
(also known as derived structure monoid of $(X,r)$), respectively (note that $A(X,r)=M(X,r')$).
Moreover, \[G=G(X,r)=\gr(X\mid x\circ y=\lambda_x(y)\circ\rho_y(x)\text{ for all }x,y\in X)\]
denotes the \emph{Yang--Baxter group} (also known as structure group) of $(X,r)$ and
\[A_{\gr}=A_{\gr}(X,r)=\gr(X\mid x+y=y+\sigma_y(x)\text{ for all }x,y\in X)\]
denotes the \emph{left derived Yang--Baxter group} (also known as left derived structure group) of $(X,r)$
(here $\free{X\mid S}$ and $\gr(X\mid S)$ refer, respectively, to a monoid and group presentation with generators
from a set $X$ and relations from a set $S$). 

It was shown by Gateva-Ivanova and Majid \cite[Theorem 3.6]{GIMa08} that the maps $\lambda\colon X\to\Sym(X)\colon x\mapsto\lambda_x$
and $\rho\colon X\to\Sym(X)\colon x\mapsto \rho_x$ can be extended uniquely to $M=M(X,r)$ (for simplicity we use the same notation
for the extension) so that one obtains a monoid morphism $\lambda\colon(M,\circ)\to\Sym(M)\colon a\mapsto\lambda_a$, and a monoid
anti-morphism $\rho\colon(M,\circ)\to\Sym(M)\colon a\mapsto\rho_a$. Moreover, the map $r_M\colon M\times M\to M\times M$,
defined as $r_M(a,b)=(\lambda_a(b),\rho_b(a))$, is a non-degenerate solution of the Yang--Baxter equation.
Furthermore, for $a,b,c\in M$, we have:
\begin{align}
    a\circ b &=\lambda_a(b)\circ\rho_b(a),\label{eq:str1}\\
    \lambda_a(b\circ c) &=\lambda_a(b)\circ\lambda_{\rho_b(a)}(c),\label{eq:str2}\\
    \rho_a(b\circ c) &=\rho_{\lambda_c(a)}(b)\circ\rho_a(c).\label{eq:str3}
\end{align}
Also the map $\sigma\colon X\to\Sym(X)\colon x\mapsto \sigma_x$ can be extended to a monoid anti-morphism
$\sigma\colon A\to\Sym(M)$ given by $\sigma_b(a)=\lambda_b\rho_{\lambda^{-1}_a(b)}(a)$. One obtains in the
monoid $(A,+)$ that $a+b=b+\sigma_b(a)$ for all $a,b\in A$. It follows (see, e.g., \cite[Proposition 3.2]{CJV21})
that there exists a bijective $1$-cocycle $\pi\colon M\to A$ (via the $\lambda$-map, i.e., $a+\lambda_a(b)=a\circ b$
for $a,b\in A$) such that $\pi(x)=x$ for all $x\in X$. So, we have a monoid embedding 
\[f\colon M\to A\rtimes{\Img(\lambda)}\colon m\mapsto(\pi(m),\lambda_m).\] 
Abusing notation, we will often identify $m$ with $f(m)$ or $\pi(m)$. Hence, in particular, we may also denote by $x$,
with $x\in X$, the generators of $A$. So, we simply may write \[M=\{a=(a,\lambda_a):a\in A\}=\free{x=(x,\lambda_x):x\in X}.\]
Hence, in the terminology of \cite{CoJeVAVe21x}, $(M,+,\circ,\lambda,\sigma)$ is a  unital YB-semitruss, that is
$(M,+)$ and $(M,\circ)$ are monoids (with the same identity $0=1$), $\lambda\colon (M,\circ) \to \Aut (M,+)$
is a monoid morphism with \[a\circ(b+c)=a\circ b+\lambda_a (c)\quad\text{and}\quad a+\lambda_a(b)=a\circ b,\] and thus
\begin{equation}
    \lambda_a\lambda_b=\lambda_{\lambda_a(b)}\lambda_{\rho_b(a)},\label{eq:YBE1}
\end{equation}
$\sigma\colon (M,+)\to\Aut(M,+)$ is a monoid anti-morphism with 
\begin{equation}
    a+b=b+\sigma_b(a)\quad\text{and}\quad\sigma_{\lambda_a(b)}\lambda_a(c)=\lambda_a\sigma_b(c),\label{eq:lamsig}
\end{equation}
and thus also
\begin{equation}
    \sigma_a\sigma_b=\sigma_{\sigma_a(b)}\sigma_a\label{eq:sigsig}
\end{equation}
for all $a,b,c\in M$.

Because of the latter
assumption, one obtains that the maps $\lambda$, $\rho$, and $\sigma$ naturally induce left and right  actions on the structure group
(see also \cite{MR1178147}) and so that the equations \eqref{eq:str1}, \eqref{eq:str2}, and \eqref{eq:str3} remain valid in $G$;
we use the same notation for these maps. Moreover, the bijective $1$-cocycle $M\to A$ naturally induces a bijective $1$-cocycle
$G\to A_{\gr}$ (via the $\lambda$-map). Hence again we can identify $G$ with $A_{\gr}$ and thus $(G,+,\circ,\lambda,\sigma)$
is a YB-semitruss with $(G,+)$ and $(G,\circ)$ groups, in other words it is a skew left brace. Also,
\[G=\{(a,\lambda_a):a\in A_{\gr}\} \s A_{\gr}\rtimes {\Img(\lambda)}.\]
So, here we thus have \[\lambda_a (b)=-a+a\circ b\quad\text{and}\quad\sigma_b(a)=-b+a+b\] for all $a,b\in G$, and we have the
associated solution $(G,r_G)$ with $r_G(a,b)=(\lambda_a (b),\rho_b(a))$. Note that $X$ is naturally embedded in the structure
monoid $M$. However, the natural map $i\colon X\to G$ is not necessarily injective (see, e.g., \cite{MR3974961}) but it is a morphism
of solutions. A solution $(X,r)$ is said to be \emph{injective} if $i$ is injective. It is well known that involutive solutions
are always injective.

In \cite[Definition 1.5]{CJKVAV2020} the group, called the \emph{permutation group} of the non-degenerate solution $(X,r)$,
\[\mc{G}_{\lambda,\rho}=\mc{G}_{\lambda,\rho}(X,r)=\gr((\lambda_x,\rho_x^{-1}):x\in X)\s\Sym(X)^2,\]
has been introduced and it is shown that $\mc{G}_{\lambda,\rho}$ is equipped with a skew left brace structure so that the
natural map $G\to\mc{G}_{\lambda,\rho}$ is an epimorphism of skew left braces. We will consider an isomorphic copy of this
skew left brace. For this we first need to prove some lemmas. The following lemma has been shown partially in
\cite[Proposition 4.3]{CoJeVAVe21x}. However an identity shown in \cite[Lemma 3.3]{CoJeVAVe21x} gives an easier proof and
a complete symmetric result. Put \[\mf{q}\colon X\to X\colon x\mapsto\lambda_x^{-1}(x),\] known as the \emph{diagonal map}.
Recall that $\mf{q}$ is a bijective map because $(X,r)$ is non-degenerate and thus bijective (see \cite[Theorem 3.1]{CoJeVAVe21x},
\cite{MR4469332} and \cite{diagonals}).

\begin{lem}\label{lem1}
    Let $a,b\in A$ and assume $\{\alpha,\beta,\gamma\}=\{\lambda,\rho,\sigma\}$. If $\alpha_a=\alpha_b$ and $\beta_a=\beta_b$
    then also $\gamma_a=\gamma_b$. Furthermore, if $\alpha_a=\beta_a=\id$ then also $\gamma_a=\id$.
    \begin{proof}
        From \eqref{eq:lamsig} and \eqref{eq:YBE1}, and the definition of the $\sigma$-map, we get that
        $\rho_b(a)=\lambda^{-1}_{\lambda_a(b)}\sigma_{\lambda_a(b)}(a)=\lambda^{-1}_{\lambda_a(b)}\lambda_a\sigma_b\lambda^{-1}_a(a)
        =\lambda_{\rho_b(a)}\lambda^{-1}_b\sigma_b\mf{q}(a)$ for $a,b\in A$. Since, $\rho_b$ is bijective it yields 
        $\lambda_a\lambda^{-1}_b\sigma_b\mf{q}\rho^{-1}_b(a)=a$ for all $a\in A$.
        Hence, $\mf{q}(a)=\lambda^{-1}_b\sigma_b\mf{q}\rho^{-1}_b(a)$ for all $a\in A$.
        So, $\sigma_b=\lambda_b\mf{q}\rho_b\mf{q}^{-1}$. The result now is obvious.
    \end{proof}
\end{lem}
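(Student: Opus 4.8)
The plan is to reduce the whole statement to a single closed-form identity, valid for every $b\in A$:
\[\sigma_b=\lambda_b\,\mf{q}\,\rho_b\,\mf{q}^{-1},\]
where $\mf{q}$ is the diagonal bijection extended to $A$ via $a\mapsto\lambda_a^{-1}(a)$. Since $\mf{q}$ is a fixed bijection independent of $b$, this identity lets one recover any one of $\lambda_b,\rho_b,\sigma_b$ from the other two, and the Lemma then follows with essentially no further work.

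To obtain the identity I would compute $\rho_b(a)$ inside the YB-semitruss on $M$ (transported to $A$ through the bijective $1$-cocycle). The key move is to specialise the defining formula $\sigma_c(d)=\lambda_c\rho_{\lambda_d^{-1}(c)}(d)$ for the $\sigma$-map at $c=\lambda_a(b)$ and $d=a$, which produces the $\rho$-term: $\sigma_{\lambda_a(b)}(a)=\lambda_{\lambda_a(b)}\rho_b(a)$, hence $\rho_b(a)=\lambda_{\lambda_a(b)}^{-1}\sigma_{\lambda_a(b)}(a)$. Then \eqref{eq:lamsig} in the form $\sigma_{\lambda_a(b)}\lambda_a=\lambda_a\sigma_b$, evaluated at $\mf{q}(a)=\lambda_a^{-1}(a)$, and \eqref{eq:YBE1} in the form $\lambda_{\lambda_a(b)}^{-1}\lambda_a=\lambda_{\rho_b(a)}\lambda_b^{-1}$ successively rewrite this as $\rho_b(a)=\lambda_{\rho_b(a)}\lambda_b^{-1}\sigma_b\mf{q}(a)$. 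Replacing $a$ by $\rho_b^{-1}(a)$ (legitimate since $\rho_b$ is bijective) cancels the leading factor, giving $a=\lambda_a\lambda_b^{-1}\sigma_b\mf{q}\rho_b^{-1}(a)$ for all $a$; applying $\lambda_a^{-1}$ and reading the result as an equality of maps yields $\mf{q}=\lambda_b^{-1}\sigma_b\mf{q}\rho_b^{-1}$, which rearranges to the displayed identity.

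With the identity available the conclusion is immediate: solving it three ways gives $\sigma_b=\lambda_b\mf{q}\rho_b\mf{q}^{-1}$, $\rho_b=\mf{q}^{-1}\lambda_b^{-1}\sigma_b\mf{q}$ and $\lambda_b=\sigma_b\mf{q}\rho_b^{-1}\mf{q}^{-1}$, so for any partition $\{\alpha,\beta,\gamma\}=\{\lambda,\rho,\sigma\}$, if $\alpha_a=\alpha_b$ and $\beta_a=\beta_b$ then substituting into the formula expressing $\gamma$ through $\alpha$, $\beta$ and $\mf{q}$ forces $\gamma_a=\gamma_b$; all three choices are handled uniformly. The final assertion is the special case $b=0$: the common identity $0$ of $(M,+)$ and $(M,\circ)$ satisfies $\lambda_0=\rho_0=\sigma_0=\id$, so $\alpha_a=\beta_a=\id$ is exactly $\alpha_a=\alpha_0$, $\beta_a=\beta_0$, whence $\gamma_a=\gamma_0=\id$. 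The only genuinely delicate point is the bookkeeping in the derivation of the master identity — spotting that the substitution $c=\lambda_a(b)$ is what makes the $\rho$-term surface, keeping track of which monoid the permutations act on, and invoking \eqref{eq:lamsig} and \eqref{eq:YBE1} with precisely the right arguments; after that everything is mechanical.
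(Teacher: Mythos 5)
Your proposal is correct and follows essentially the same route as the paper: you derive the same master identity $\sigma_b=\lambda_b\,\mf{q}\,\rho_b\,\mf{q}^{-1}$ by specialising the definition of the $\sigma$-map at $c=\lambda_a(b)$, $d=a$, then applying \eqref{eq:lamsig} and \eqref{eq:YBE1} and substituting $\rho_b^{-1}(a)$, exactly as in the paper's proof. The only cosmetic difference is that you make the ``obvious'' final step explicit (solving the identity three ways and using $b=0$ with $\lambda_0=\rho_0=\sigma_0=\id$ for the last assertion), which is fine.
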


As a consequence of the previous lemma we can now extend \cite[Lemma 4.1]{CoJeVAVe21x} in the following sense.
For completeness' sake we include a proof.

\begin{lem}\label{lem2}
    Let $a,a',b,b'\in A$.
    \begin{enumerate}
        \item If $\sigma_a=\sigma_{a'}$ and $\lambda_a=\lambda_{a'}$ then $\lambda_{a\circ b}=\lambda_{a'\circ b}$, $\rho_{b\circ a}=\rho_{b\circ a'}$, 
        $\sigma_{b\circ a}=\sigma_{b\circ a'}$, and $\lambda_{a+b}=\lambda_{a'+b}$, $\rho_{a+b}=\rho_{a'+b}$, $\sigma_{a+b}=\sigma_{a'+b}$.
        \item If  $\sigma_b=\sigma_{b'}$ and $\lambda_b=\lambda_{b'}$ then $\lambda_{a\circ b}=\lambda_{a\circ b'}$, $\rho_{b\circ a}=\rho_{b'\circ a}$, 
        $\sigma_{b\circ a}=\sigma_{b'\circ a}$, and $\lambda_{a+b}=\lambda_{a+b'}$, $\rho_{a+b}=\rho_{a+b'}$, $\sigma_{a+b}=\sigma_{a+b'}$.
    \end{enumerate}
    \begin{proof}
        (1) Assume $\sigma_a=\sigma_{a'}$ and $\lambda_a=\lambda_{a'}$. Because of Lemma~\ref{lem1} we have that also $\rho_a=\rho_{a'}$.
        So, we have  that $\lambda_{a\circ b}=\lambda_a\lambda_b=\lambda_{a'}\lambda_b=\lambda_{a'\circ b}$ and similarly
        $\rho_{b\circ a}=\rho_{b\circ a'}$. Further,
        $\sigma_{\lambda_b(a)}=\lambda_b\sigma_a\lambda^{-1}_b=\lambda_b \sigma_{a'}\lambda^{-1}_b=\sigma_{\lambda_b(a')}$ yields
        $\sigma_{b\circ a}=\sigma_{b+\lambda_b(a)}=\sigma_{\lambda_b(a)}\sigma_b=\sigma_{\lambda_b(a')}\sigma_b=\sigma_{b\circ a'}$.
        Now, $\lambda_{a+b}=\lambda_{a\circ\lambda^{-1}_a(b)}=\lambda_a\lambda_{\lambda^{-1}_a(b)}
        =\lambda_{a'}\lambda_{\lambda^{-1}_{a'}(b)}=\lambda_{a'+b}$, and similarly $\rho_{a+b}=\rho_{a'+b}$.
        Moreover, $\sigma_{a+b}=\sigma_b\sigma_a=\sigma_b\sigma_{a'}=\sigma_{a'+b}$. 
        
        (2) Assume $\sigma_b=\sigma_{b'}$ and $\lambda_b=\lambda_{b'}$. Because of Lemma~\ref{lem1} we have that also $\rho_b=\rho_{b'}$.
        So, we have  $\lambda_{a\circ b}=\lambda_{a\circ b'}$, $\rho_{b\circ a}=\rho_{b'\circ a}$. Furthermore,
        $\sigma_{b\circ a}=\sigma_{b+\lambda_b(a)}=\sigma_{\lambda_b(a)}\sigma_b=\sigma_{\lambda_{b'}(a)}\sigma_{b'}=\sigma_{b'\circ a}$.
        Moreover, $a+b=b+\sigma_b(a)=b\circ\lambda_b \sigma_b(a)$. Then
        $\lambda_{a+b}=\lambda_{b\circ\lambda_b \sigma_b(a)}=\lambda_b\lambda_{\lambda_b \sigma_b(a)}
        =\lambda_{b'}\lambda_{\lambda_{b'} \sigma_{b'}(a)}=\lambda_{a+b'}$. Similarly $\rho_{a+b}=\rho_{a+b'}$. 
        Finally, $\sigma_{a+b}=\sigma_b\sigma_a=\sigma_{b'}\sigma_a=\sigma_{a+b'}$.
    \end{proof}
\end{lem}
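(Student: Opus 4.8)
The plan is to derive every one of the six equalities in each part directly from the YB-semitruss structure recorded above — the morphism property of $\lambda$ and the anti-morphism properties of $\rho$ and $\sigma$, the cocycle relation $a+\lambda_a(b)=a\circ b$, and the compatibility \eqref{eq:lamsig} — together with Lemma~\ref{lem1}. The very first step, in both parts, is to upgrade the hypothesis: if $\lambda_a=\lambda_{a'}$ and $\sigma_a=\sigma_{a'}$, then Lemma~\ref{lem1} forces $\rho_a=\rho_{a'}$ as well (and symmetrically for $b,b'$ in part (2)). So from the outset one may assume that \emph{all three} of $\lambda$, $\rho$, $\sigma$ fail to distinguish $a$ from $a'$ (resp.\ $b$ from $b'$), after which the argument reduces to bookkeeping.

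For the $\circ$-products: since $\lambda$ is a morphism and $\rho$ an anti-morphism of $(M,\circ)$, one has $\lambda_{a\circ b}=\lambda_a\lambda_b$ and $\rho_{b\circ a}=\rho_a\rho_b$, so both are visibly unchanged under $a\mapsto a'$, and, in part (2), under $b\mapsto b'$. For $\sigma_{b\circ a}$ I would rewrite $b\circ a=b+\lambda_b(a)$, use that $\sigma$ is an anti-morphism of $(M,+)$ to get $\sigma_{b\circ a}=\sigma_{\lambda_b(a)}\sigma_b$, and then invoke \eqref{eq:lamsig} in the form $\sigma_{\lambda_b(a)}=\lambda_b\sigma_a\lambda_b^{-1}$; this quantity depends on $a$ only through $\sigma_a$ and on $b$ only through $\lambda_b$ and $\sigma_b$, so it survives either substitution. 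For the $+$-sums in part (1), one uses the cocycle to write $a+b=a\circ\lambda_a^{-1}(b)$ and falls back on the $\circ$-case: $\lambda_{a+b}=\lambda_a\lambda_{\lambda_a^{-1}(b)}$ and $\rho_{a+b}=\rho_{\lambda_a^{-1}(b)}\rho_a$ depend on $a$ only via $\lambda_a$ and $\rho_a$, while $\sigma_{a+b}=\sigma_b\sigma_a$ is immediate from the anti-morphism property.

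The one point that needs a little care is the $+$-case of part (2), since there the controlled variable $b$ sits on the \emph{right} of the sum and the trick above does not apply verbatim. The fix is to pass through the defining relation $a+b=b+\sigma_b(a)$ and re-express the right-hand side as a $\circ$-product having $b$ as its \emph{left} factor, so that its $\lambda$-, $\rho$-, and $\sigma$-images are again governed by $\lambda_b$, $\rho_b$, $\sigma_b$; then the morphism/anti-morphism properties apply and $b\mapsto b'$ leaves everything fixed. This is exactly the spot where both hypotheses on $b$ are genuinely used at once — $\sigma_b=\sigma_{b'}$ to handle the inner $\sigma_b(a)$, and $\lambda_b=\lambda_{b'}$ (together with $\rho_b=\rho_{b'}$ from Lemma~\ref{lem1}) to handle the resulting product. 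Beyond this, each conclusion is a one-line substitution into the structural identities, so I expect no real obstacle.
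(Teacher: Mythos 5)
Your proposal is correct and follows essentially the same route as the paper's proof: upgrade the hypotheses via Lemma~\ref{lem1}, use that $\lambda$ is a morphism and $\rho,\sigma$ are anti-morphisms together with \eqref{eq:lamsig}, rewrite $a+b=a\circ\lambda_a^{-1}(b)$ for part (1), and rewrite $a+b=b+\sigma_b(a)$ as a $\circ$-product with left factor $b$ for part (2). No gaps to report.
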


We now introduce the set 
\[\mc{G}_{\sigma,\lambda}=\mc{G}_{\sigma,\lambda}(X,r)=\{\ga_a=(\sigma_a^{-1},\lambda_a):a\in A\}.\]
Because of Lemma~\ref{lem2} we have the following well-defined binary operation on this set:
\[\ga_a+\ga_b=\ga_{a+b}\quad\text{and}\quad\ga_a\circ\ga_b=\ga_{a\circ b}.\]
Clearly $\mc{G}_{\sigma, \lambda}$ is additively (and multiplicatively) generated by the set $\{\ga_x:x\in X\}$. As in
\cite[Section 4]{CoJeVAVe21x} one now obtains that $\mc{G}_{\sigma ,\lambda}$ is a skew left brace for these operations
with $\lambda$-map \[\lambda_{\ga_a}(\ga_b)=\ga_{\lambda_a(b)}\] and $\sigma$-map \[\sigma_{\ga_a}(\ga_b)=\ga_{\sigma_a(b)}.\]
(Note that we use a $\lambda$ notation in several contexts, for example for defining set-theoretical solutions and  on the skew
left brace $\mc{G}_{\sigma,\lambda}$. It is clear from the context and the index used, which $\lambda$-map we consider.
The same convention applies to the $\sigma$-map.)

Since the image of the map \[\ga\colon M=A\to\mc{G}_{\sigma,\lambda}\colon a\mapsto{\ga_a}\] 
consists of invertible elements, both additively and multiplicatively, it induces an epimorphism of skew left braces
(for which we use the same notation) \[\ga\colon G\to\mc{G}_{\sigma,\lambda}\colon a\mapsto{\ga_a}.\] 
Note that it also follows from Lemma~\ref{lem1} that the map $\mc{G}_{\sigma,\lambda}\to\mc{G}_{\lambda,\rho}$, defined as 
${\ga_a}\mapsto(\lambda_a,\rho_a^{-1})$, is an isomorphism of skew left braces. From now on we work with the skew left brace
$\mc{G}_{\sigma,\lambda}$ and we simply denote it as \[\mc{G}=\mc{G}_{\sigma,\lambda}.\] Recall that the \emph{socle} of the
skew left brace $G$ is by definition \[\Soc(G)=\Ker(\ga)=\Ker(\lambda)\cap Z(G,+)=\{a\in G:\lambda_a=\sigma_a=\id\}.\]
It is an ideal of the skew left brace $G$. Ideals of skew left braces are precisely the kernels of morphisms 
of skew left braces. It turns out that an additive subgroup $I$ of a skew left brace $(B,+,\circ)$ is an \emph{ideal} if and only
if it is $\lambda$-invariant, i.e., $\lambda_a (I)=I$ for all $a\in B$, $(I,+)$ is a normal subgroup of $(B,+)$ and $(I,\circ)$
is a normal subgroup of $(B,\circ)$. Equivalently, $I$ is $\lambda$-invariant, $(I,+)$ is a normal subgroup of $(B,+)$ and
$i*b\in I$ for all $i\in I$ and $b\in B$. Recall that the operation $*$ is defined as follows: \[a*b=-a+a\circ b-b=\lambda_a(b)-b\]
for all $a,b\in B$. An additive subgroup $I$ that is $\lambda$-invariant is said to be a \emph{left ideal} of the skew left brace.

The solution $(X,r)$ is said to be \emph{irretractable} \cite{MR3974961} (see also, e.g., \cite{ESS99,CJKVAV2020}) if the
restriction of the map $\ga\colon M\to\mc{G}_{\sigma,\lambda}$ to $X$ is injective. In case $\ga(X)$ is a singleton then
the solution $(X,r)$ is a Lyubashenko solution.

Clearly, the restriction of the map $\ga$ to $X$ is a morphism of solutions. Hence, if $(X,r)$ is simple then it follows that
either $\ga(X)$ is a singleton and we have a Lyubashenko solution, or the restriction of $\ga$ to $X$ is an injective map,
i.e., the solution $(X,r)$ is irretractable. Note that in the latter case $(X,r)$ is an injective solution and it is isomorphic
with the solution induced on $\ga(X)$. Furthermore, in the latter case $\mc{G}_{\sigma,\lambda}$ is additively
(and also multiplicatively) generated by the set $\ga(X)$.

Let \[\mc{C}=\{\sigma_a:a\in A\}.\]
It is a group for the multiplication \[\sigma_a\circ\sigma_b=\sigma_a\sigma_b=\sigma_{b+a}\] (note that the multiplication
$\circ$ in $\mc{C}$ coincides, in fact, with the ordinary composition of maps) and we consider this as a trivial skew left brace,
i.e., its additive structure is the same as its multiplicative structure (i.e., $\sigma_a + \sigma_b = \sigma_a\circ\sigma_b$).
The multiplicative group $\mc{G}$ acts on $\mc{C}$ via the action $\cdot$ defined as follows:
\[{\ga_a}\cdot{\sigma_b}=\sigma_{\lambda_a(b)}=\lambda_a\sigma_b\lambda_a^{-1}.\]
Hence we may consider the semi-direct product of skew left braces  \[\mc{S}=\mc{C}\rtimes \mc{G}.\] 
(For better readability we
will denote elements of $\mc{S}$ as pairs $(\sigma_a,\ga_b)$ with $a,b\in A$.) So, the additive operation is the componentwise
addition and the multiplication is as follows:
\[(\sigma_a,\ga_b)\circ(\sigma_c,\ga_d)=(\sigma_a\circ\sigma_{\lambda_b(c)},\ga_{b\circ d})=(\sigma_{\lambda_b(c)+a},\ga_{b\circ d}).\]

\begin{lem}\label{actionS}
    The group $(\mc{S},\circ)$ acts on $X$ as follows: \[(\sigma_a,\ga_b)\cdot x=\sigma_a \lambda_b(x).\]
    \begin{proof}
        By \eqref{eq:lamsig}, the mapping $(\mc{S}(X,r),\circ)\to(\Sym(X),\circ)$, defined as
        $(\sigma_a,\ga_b)\mapsto \sigma_a \lambda_b$, is a group homomorphism. Hence the statement follows.
    \end{proof}
\end{lem}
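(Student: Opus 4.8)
The plan is to realise the claimed formula as the composition of a group homomorphism $\mc{S}\to\Sym(X)$ with the tautological action of $\Sym(X)$ on $X$. Concretely, I would introduce the map
\[\Phi\colon(\mc{S},\circ)\to(\Sym(X),\circ)\colon(\sigma_a,\ga_b)\mapsto\sigma_a\lambda_b,\]
which makes sense because $(X,r)$ is non-degenerate, so each $\sigma_a$ and each $\lambda_b$ lies in $\Sym(X)$; it is well defined on the nose, since $\sigma_a\in\mc{C}$ is literally a permutation of $X$ and the second coordinate $\ga_b=(\sigma_b^{-1},\lambda_b)\in\mc{G}$ determines $\lambda_b$ uniquely (no appeal to Lemma~\ref{lem2} being needed at this point). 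Granting that $\Phi$ is a group homomorphism, and noting that $\Phi$ sends the identity $(\sigma_0,\ga_0)=(\id,\id)$ of $\mc{S}$ to $\id$, the composite of $\Phi$ with the evaluation action of $\Sym(X)$ on $X$ is an action of $(\mc{S},\circ)$ on $X$, and it is given by $(\sigma_a,\ga_b)\cdot x=\Phi(\sigma_a,\ga_b)(x)=\sigma_a(\lambda_b(x))$, which is exactly the formula in the statement.

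So the only real content is checking that $\Phi$ respects $\circ$. Here I would start from the multiplication rule in $\mc{S}=\mc{C}\rtimes\mc{G}$ recalled just before the lemma, namely $(\sigma_a,\ga_b)\circ(\sigma_c,\ga_d)=(\sigma_a\circ\sigma_{\lambda_b(c)},\ga_{b\circ d})$, so that $\Phi$ applied to the product equals $(\sigma_a\circ\sigma_{\lambda_b(c)})\lambda_{b\circ d}$. I would then rewrite this using three ingredients already available: first, $\lambda_{b\circ d}=\lambda_b\lambda_d$ because $\lambda$ is a monoid morphism on $(M,\circ)$; second, the composition law in the trivial skew left brace $\mc{C}$, which with the fact that $\circ$ there is ordinary composition of maps gives $\sigma_a\circ\sigma_{\lambda_b(c)}=\sigma_a\,\sigma_{\lambda_b(c)}$; and third, the second identity in \eqref{eq:lamsig}, which — since $\lambda_b$ is bijective — is equivalent to $\sigma_{\lambda_b(c)}=\lambda_b\,\sigma_c\,\lambda_b^{-1}$ (this is precisely the action of $\mc{G}$ on $\mc{C}$ that defines $\mc{S}$). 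Combining these,
\[\Phi\big((\sigma_a,\ga_b)\circ(\sigma_c,\ga_d)\big)=\sigma_a\,(\lambda_b\sigma_c\lambda_b^{-1})\,\lambda_b\lambda_d=\sigma_a\lambda_b\sigma_c\lambda_d=(\sigma_a\lambda_b)(\sigma_c\lambda_d)=\Phi(\sigma_a,\ga_b)\,\Phi(\sigma_c,\ga_d),\]
which is what is needed.

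The one point requiring care — and essentially the only obstacle — is the bookkeeping of orders: $\sigma$ is an anti-morphism from $(M,+)$ while $\lambda$ is a morphism from $(M,\circ)$, and the semidirect twist in $\mc{S}$ places the $\lambda_b(c)$ contribution on exactly the side that lets these opposite conventions cancel after conjugating $\sigma_c$ by $\lambda_b$. Once the identity $\sigma_{\lambda_b(c)}=\lambda_b\,\sigma_c\,\lambda_b^{-1}$ is pinned down from \eqref{eq:lamsig}, the computation collapses as above; in particular nothing beyond what already went into constructing $\mc{G}$, $\mc{C}$ and $\mc{S}$ is used, and neither \eqref{eq:sigsig} nor Lemma~\ref{lem1} is invoked.
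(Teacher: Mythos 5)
Your proposal is correct and follows the paper's own argument: the paper likewise defines the map $(\sigma_a,\ga_b)\mapsto\sigma_a\lambda_b$ and notes, via \eqref{eq:lamsig}, that it is a group homomorphism $(\mc{S},\circ)\to(\Sym(X),\circ)$, which immediately gives the action. You merely spell out the verification (using the semidirect product law, the multiplicativity of $\lambda$, and $\sigma_{\lambda_b(c)}=\lambda_b\sigma_c\lambda_b^{-1}$) that the paper leaves implicit.
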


Note that the action of $\mc{S}$ on $X$ and the action of the group $\free{\lambda_x,\rho_y\colon x,y\in X}$ on $X$
have the same orbits (see for example the proof of \cite[Theorem 2.2]{MR4388351}).

\section{Simple solutions versus skew left braces}\label{sec:3}

In the skew left brace $\mc{S}=\mc{C}\rtimes \mc{G}$, as is common, we identify $\mc{C}$ and $\mc{G}$ as sub skew left braces.
In particular, we simply write $(\sigma_a,\ga_0)$ as $\sigma_a$ and $(\sigma_0,\ga_a)$ as $\ga_a$.

\begin{lem}\label{lemideal}
  Let $(X,r)$ be a finite non-degenerate solution of the Yang--Baxter equation. 
    Consider the additive subgroup  $\mc{D}=\free{\ga_x-\ga_y:x,y\in X\text{ and }}_+$ of $\mc{G}$ 
    and the additive subgroup 
    $\mc{D}'=\free{\sigma_x-\sigma_y:x,y\in X}_+$ of $\mc{C}$.
    Then,  $\mc{D}=\free{-\ga_x+\ga_y:x,y\in X}_+$ and
        $\mc{D}'=\free{\sigma_x^{-1}-\sigma_y^{-1}: x,y\in X}_+=\varphi(\mc{D})$. Moreover,
        $\mc{D}'$ and $\mb{D}=\mc{D}'\rtimes\mc{D}$ are ideal of $\mc{S}$.
    \begin{proof}
        Obviously, $\mc{D}$ is a left ideal of $\mc{G}$. To prove that $\mc{D}$ is an ideal of $\mc{G}$, we need to show that $(\mc{D},+)$
        and $(\mc{D},\circ)$ are normal subgroups of the additive, respectively, multiplicative group $\mc{G}$. The latter is equivalent with
        proving that $({\ga_x}-{\ga_y})*g\in \mc{D}$ for any $x,y\in X$ and $g\in \mc{G} $. It is sufficient to deal with $g=\ga_t$ with
        $t\in X$. Indeed, once we have shown that $(\mc{D},+)$ is a normal subgroup, it is sufficient to note that in any skew left brace $B$
        we have $a*(b+c)=a*b+b+a*c-b$ for all $a,b,c\in B$ (see, e.g., \cite{MR4256133}).
        
        We first prove that the subgroup $(\mc{D},+)$ is normal in $(\mc{G},+)$. If $x,y,t\in X$ then
        \[-\ga_t+(\ga_x-\ga_y)+\ga_t  =-{\ga_t}+\ga_t+\sigma_{\ga_t}(\ga_x-\ga_y)
        =\sigma_{\ga_t}(\ga_x)-\sigma_{\ga_t}(\ga_y)=\ga_{\sigma_t(x)}-\ga_{\sigma_t(y)}\in\mc{D}.\]
        Since this holds for all $t\in X$, it follows that indeed the additive subgroup is a normal subgroup.
        
        Also note that $-\ga_y+\ga_x=-{\ga_x}+(\ga_x-\ga_y)+\ga_x\in\mc{D}$ as $(\mc{D},+)$ is a normal subgroup of $(\mc{G},+)$.
        Hence, $\free{-\ga_x+\ga_y:x,y\in X}_+\s \mc{D}$. Similarly as above, one proves that
        $\free{-\ga_x+\ga_y:x,y\in X}_+$ is a normal subgroup, and that it contains $\mc{D}$.
        It follows that $\mc{D}=\free{-\ga_x+\ga_y:x,y\in X}_+$ and thus $\varphi(\mc{D})= \mc{D}'$.
        
        Next we prove that $(\mc{D},\circ)$ is a normal subgroup of $\mc{G}$. Because the diagonal map $\mf{q}$ is bijective,
        we write $y=\lambda_z^{-1}(z)=\lambda_{z^{-1}}(z)$ for some $z\in X$. Hence
        $\ga_y=\ga_{\lambda_{{z^{-1}}}(z)}=\lambda_{\ga_{z^{-1}}}(\ga_z)$. The latter means that
        $\ga_y=-\ga_{z^{-1}}+\ga_{z^{-1}}\circ \ga_z=-\ga_{z^{-1}}$, as $\ga_{z^{-1}}$ is the inverse of $\ga_z$ in $\mc{G}$
        for the multiplicative operation. Hence, $-\ga_y+\ga_x=\ga_{z^{-1}}+\ga_x=\ga_{z^{-1}} \circ \ga_{\lambda_z(x)}$,
        and thus \[(-\ga_y+\ga_x)*\ga_t  =(\ga_{z^{-1}}\circ \ga_{\lambda_z(x)})*\ga_t
        =\lambda_{\ga_{z^{-1}}\circ\ga_{\lambda_z(x)}}(\ga_t)-\ga_t=\ga_{\lambda_z^{-1}\lambda_{\lambda_z(x)}(t)}-\ga_t\in\mc{D}.\]
        
        Finally, we show that $\mb{D}$ and $\mc{D}'$ are ideals of $\mc{S}$. It  easily is verified that they are left ideals of $\mc{S}$.
        Since \[(\sigma_c,\ga_d)+(\sigma_a,\ga_b)-(\sigma_c,\ga_d)=(\sigma_{-c+a+c},\ga_{d+b-d})=(\sigma_{\sigma_c(a)},\ga_{d+b-d}),\]
        we get that for $a=y-x$,  the first coordinate is in $\mc{D}'$ and if $b\in \mc{D}$, then  the second coordinate belongs to $\mc{D}$
        (because $(\mc{D},+)$ is a normal subgroup of $(\mc{G},+)$). Hence, $(\mb{D},+)$ and $(\mc{D}',+)$ are  normal subgroups of $(\mc{S},+)$.
        Moreover,
        \begin{align*}
            (\sigma_c,\ga_d)^{-1} \circ \sigma_a\circ(\sigma_c,\ga_d) & =(\sigma_{\lambda^{-1}_d(-c)},\ga_{d^{-1}})\circ (\sigma_{c+a},\ga_d)\\
            & =\sigma_{\lambda_d^{-1}(c+a)+\lambda_d^{-1}(-c)}=\sigma_{\lambda_d^{-1}(-c)}\sigma_{\lambda_d^{-1}(a)}\sigma_{\lambda_d^{-1}(c)}.
        \end{align*}
        So, again, considering $a=y-x$, we obtain that $\mc{D}'$  also is a multiplicative normal subgroup.
        In general, using \eqref{eq:sigsig}, we have
        \begin{align*}
            (\sigma_c,\ga_d)^{-1}\circ(\sigma_a,\ga_b) \circ(\sigma_c,\ga_d)
            &=(\sigma_{\lambda^{-1}_d(-c)},\ga_{d^{-1}})\circ (\sigma_{\lambda_b(c)+a},\ga_{b\circ d})
            =(\sigma_{\lambda_d^{-1}(\lambda_b(c)+a)+\lambda_d^{-1}(-c)},\ga_{d^{-1}\circ b\circ d})\\
            &=(\sigma_{\lambda_d^{-1}(-c)}\sigma_{\lambda_d^{-1}(a)}\sigma_{\lambda_d^{-1}(\lambda_b(c))},\ga_{d^{-1}\circ b \circ d})
            =(\sigma_u\sigma_{\lambda_d^{-1}(a)}\sigma_v,\ga_{d^{-1}\circ b\circ d})\\
            &=(\sigma_{\sigma_u(\lambda_d^{-1}(a))}\sigma_u\sigma_v,\ga_{d^{-1}\circ b\circ d})
            =(\sigma_{\sigma_u(\lambda_d^{-1}(a))}\sigma_u\sigma_v,\ga_d^{-1}\circ\ga_b\circ\ga_d),
        \end{align*}
        where $u=\lambda_d^{-1}(-c)$ and $v=\lambda_d^{-1}(\lambda_b(c))$. So, if $a=x-y$, then, as before,
        $\sigma_{u}\sigma_{v}\sigma_{\sigma_{v}(\lambda_d^{-1}(a))}\in \mc{D}'$ and if $b$ is also a difference then we know
        that the second term  also is  in $\mc{D}$. This shows that $\mb{D}$ is a multiplicative normal subgroup of $\mc{S}$.
        Hence, $\mb{D}$ is indeed an ideal of the skew left brace $\mc{S}$.       
    \end{proof}
\end{lem}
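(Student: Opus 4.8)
The plan is to establish, as a natural stepping stone (which the later parts need even though the statement only mentions $\mc{S}$), that $\mc{D}$ is an ideal of the skew left brace $\mc{G}=\mc{G}_{\sigma,\lambda}$, then transfer this along the additive homomorphism $\varphi\colon\mc{G}\to\mc{C}$ sending $\ga_a$ to $\sigma_a^{-1}$ in order to deal with $\mc{D}'$, and finally reassemble the two halves inside $\mc{S}=\mc{C}\rtimes\mc{G}$. At each stage I would reduce every verification to the additive (and multiplicative) generators $\ga_x$, $x\in X$, of $\mc{G}$, using the computational rules $\lambda_{\ga_a}(\ga_b)=\ga_{\lambda_a(b)}$ and $\sigma_{\ga_a}(\ga_b)=\ga_{\sigma_a(b)}$ of $\mc{G}_{\sigma,\lambda}$, the fact that each $\lambda_a$ and each $\sigma_a$ is an additive automorphism, and the identity $a*(b+c)=a*b+b+a*c-b$; once $(\mc{D},+)$ is known to be normal, the last two let a $*$-computation with arbitrary arguments be passed to generators.

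\emph{Step 1: $(\mc{D},+)$ is normal in $(\mc{G},+)$, and $\mc{D}=\free{-\ga_x+\ga_y:x,y\in X}_+$.} Since $\mc{G}$ is additively generated by the $\ga_t$, it suffices to conjugate a generator $\ga_x-\ga_y$ of $\mc{D}$ by a single $\ga_t$. Using $-\ga_t+(\,\cdot\,)+\ga_t=\sigma_{\ga_t}(\,\cdot\,)$ and the additivity of $\sigma_{\ga_t}$ one gets $-\ga_t+(\ga_x-\ga_y)+\ga_t=\ga_{\sigma_t(x)}-\ga_{\sigma_t(y)}\in\mc{D}$. Once normality is in hand, $-\ga_y+\ga_x=-\ga_x+(\ga_x-\ga_y)+\ga_x\in\mc{D}$, so $\free{-\ga_x+\ga_y:x,y\in X}_+\s\mc{D}$; the symmetric argument shows the right-hand group is normal and contains each $\ga_x-\ga_y$, giving the reverse inclusion and hence equality.

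\emph{Step 2: $\mc{D}$ is multiplicatively normal in $\mc{G}$, and $\mc{D}'=\free{\sigma_x^{-1}-\sigma_y^{-1}:x,y\in X}_+=\varphi(\mc{D})$.} It remains to check $(\ga_x-\ga_y)*\ga_t\in\mc{D}$ for $x,y,t\in X$; with Step 1 and $\lambda$-invariance this makes $\mc{D}$ an ideal of $\mc{G}$. The delicate point — and what I expect to be the main obstacle — is that $*$ is not additive in its first variable, so $\ga_x-\ga_y$ cannot be split termwise. The device is to use the bijectivity of the diagonal map $\mf{q}$: pick $z\in X$ with $y=\mf{q}(z)=\lambda_z^{-1}(z)$, so that $\ga_y=\lambda_{\ga_{z^{-1}}}(\ga_z)=-\ga_{z^{-1}}+\ga_{z^{-1}}\circ\ga_z=-\ga_{z^{-1}}$; then $-\ga_y+\ga_x=\ga_{z^{-1}}+\ga_x=\ga_{z^{-1}}\circ\ga_{\lambda_z(x)}$ is a $\circ$-product, and
\[(\ga_{z^{-1}}\circ\ga_{\lambda_z(x)})*\ga_t=\lambda_{\ga_{z^{-1}}\circ\ga_{\lambda_z(x)}}(\ga_t)-\ga_t=\ga_{\lambda_z^{-1}\lambda_{\lambda_z(x)}(t)}-\ga_t\in\mc{D}.\]
Applying the additive homomorphism $\varphi$ to the two presentations $\mc{D}=\free{\ga_x-\ga_y}_+=\free{-\ga_x+\ga_y}_+$, and noting that in the trivial skew brace $\mc{C}$ one has $-\sigma_x^{-1}+\sigma_y^{-1}=\sigma_x-\sigma_y$, yields at once $\mc{D}'=\free{\sigma_x^{-1}-\sigma_y^{-1}:x,y\in X}_+=\varphi(\mc{D})$.

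\emph{Step 3: $\mc{D}'$ and $\mb{D}=\mc{D}'\rtimes\mc{D}$ are ideals of $\mc{S}=\mc{C}\rtimes\mc{G}$.} That they are left ideals is immediate from $\lambda$-invariance. For additive normality, conjugation in $(\mc{S},+)$ is componentwise, $(\sigma_c,\ga_d)+(\sigma_a,\ga_b)-(\sigma_c,\ga_d)=(\sigma_{\sigma_c(a)},\ga_{d+b-d})$, so choosing $a=y-x$ keeps the first coordinate in $\mc{D}'$, while Step 1 keeps the second in $\mc{D}$ whenever $b\in\mc{D}$. For multiplicative normality I would expand $(\sigma_c,\ga_d)^{-1}\circ(\sigma_a,\ga_b)\circ(\sigma_c,\ga_d)$ by means of the semidirect multiplication: the second coordinate is $\ga_{d^{-1}\circ b\circ d}$, which lies in $\mc{D}$ when $\ga_b\in\mc{D}$ by Step 2, and the first coordinate $\sigma_{\lambda_d^{-1}(-c)}\,\sigma_{\lambda_d^{-1}(a)}\,\sigma_{\lambda_d^{-1}(\lambda_b(c))}$ can be rearranged using $\sigma_a\sigma_b=\sigma_{\sigma_a(b)}\sigma_a$ \eqref{eq:sigsig} so that, for $a=x-y$, the ``difference factor'' ends up squeezed between two mutually inverse factors and hence lands in $\mc{D}'$. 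Taking $b=0$ specialises this to multiplicative normality of $\mc{D}'$ itself, which completes the argument.
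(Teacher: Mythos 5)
Your proposal is correct and follows essentially the same route as the paper's proof: additive normality of $\mc{D}$ via conjugation by $\sigma_{\ga_t}$ on generators, the two generating sets $\free{\ga_x-\ga_y}_+=\free{-\ga_x+\ga_y}_+$, the diagonal-map trick writing $\ga_y=-\ga_{z^{-1}}$ so that $-\ga_y+\ga_x=\ga_{z^{-1}}\circ\ga_{\lambda_z(x)}$ makes the $*$-computation possible, the transfer along $\varphi$ to get $\mc{D}'$, and the same semidirect-product conjugation formulas (with \eqref{eq:sigsig}) in $\mc{S}$, with the $\mc{D}'$ case recovered as the $b=0$ specialisation rather than done first. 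Only your closing phrase about the difference factor being ``squeezed between two mutually inverse factors'' is literally accurate just in the $b=0$ case (for general $b$ one is left with the extra factor $\sigma_u\sigma_v=\sigma_{\lambda_d^{-1}(b*c)}$, which must also be seen to lie in $\mc{D}'$), but this is glossed at the same level of brevity in the paper's own proof.
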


\begin{thm}\label{simpleNL}
    Let $(X,r)$ be a finite non-degenerate solution of the Yang--Baxter equation with $|X|>1$.
    If $(X,r)$ is not a Lyubashenko solution, then $(X,r)$ is simple if and only if $(X,r)$ is irretractable,
    the ideal $\mc{D}$  is the smallest non-zero ideal of $\mc{G}$ and the group $\mb{D}$ acts transitively on $X$.
    \begin{proof}
        Because $(X,r)$ is not a Lyubashenko solution, it is irretractable, i.e., the map $X\to \mc{G}\colon x\mapsto\ga_x$
        is injective. Therefore, the solution $(X,r)$ is the restriction of the solution $r_{\mc{G}}$ associated to $\mc{G}=\mc{G}(X,r)$.
        Let $I$ be non-zero ideal of $\mc{G}$ and let $Y$ be the natural image of $X$ in the skew left brace $\mc{G}/I$.
        Clearly $Y$ inherits a solution of $X$, actually the solution on $Y$ is a restriction of the solution associated to
        $\mc{G}/I$. We thus get an epimorphism of solutions $f\colon X\to Y\colon x\mapsto{\ga_x}+I$. Since $(X,r)$ is a simple
        solution we have that $|Y|=1$ or $f$ is an isomorphism. Suppose the latter holds. If $0\ne\ga_a\in I$ then
        $\ga_a*\ga_x=\lambda_{\ga_a}(\ga_x)-\ga_x=\ga_{\lambda_a(x)}-\ga_x\in I$ and thus ${\ga_x}+I={\ga_{\lambda_a(x)}}+I$
        for any $x\in X$. Since $f$ is an isomorphism it follows that $\lambda_a=\id$. Since $\ga_a\ne 0$, we get $\sigma_a\ne\id$.
        Therefore, there exists $x\in X$ such that $\sigma_a(x)\ne x$. Then
        \[[-\ga_x,-\ga_a]_+=-\ga_x-\ga_a+\ga_x+\ga_a=-{\ga_x}+\sigma_{\ga_a}(\ga_x)=-\ga_x+\ga_{\sigma_a(x)}.\]
        Since $[-\ga_x,-\ga_a]_+\in I$, we obtain ${\ga_x}+I={\ga_{\sigma_a(x)}}+I$, which contradicts the bijectivity of $f$.
        Hence, $|Y|=1$ and so ${\ga_x}+I ={\ga_y}+I$ for all $x,y\in X$. Thus $I$ contains all elements $\ga_x-\ga_y$ and so
        $\mc{D}\s I$. Hence, we have shown that $\mc{D}$ is the smallest non-zero ideal in $\mc{G}$. To prove that the action of
        the group $\mb{D}$ on $X$ is transitive consider $\mb{D}$ as an ideal of $\mc{S}$ (see Lemma~\ref{lemideal})
        and consider the natural epimorphism of solutions $X\to X/{\sim_\mb{D}}$. Because $(X,r)$ is a simple solution, we get
        that $|X/{\sim_\mb{D}}|=1$. Thus indeed $\mb{D}$ acts transitively on $X$. This proves the necessity of the conditions.
        
        To prove the necessity of the conditions, assume $f\colon(X,r)\to(Y,s)$ is a non-bijective epimorphism of solutions.
        By the assumptions $X$ is a subset of $\mc{G}=\mc{G}(X,r)$ and $r$ is the restriction of $r_{\mc{G}}$ to $X$.
        Let $\mc{G}(f)\colon\mc{G}\to \mc{G}(Y,s)$ be the induced epimorphism of skew left left braces. Since $f$ is not
        injective also $\mc{G}(f)$ is not injective and thus $\Ker(\mc{G}(f))\ne 0$. Hence, by the assumptions,
        $\mc{D}=\langle \ga_x-\ga_y:x,y\in X\rangle_{+}\s\Ker(\mc{G}(f))$. Again, by the assumptions, $\mb{D}=\mb{D}(X,r)$
        acts transitively on $X$. So, it follows that $\mb{D}\cdot x=X$ for $x\in X$. Since $\mc{D}\s\Ker(\mc{G}(f))$
        we have that $Y=f(X)=\{ f(x)\}$, i.e., $|Y|=1$, as desired. 
    \end{proof}
\end{thm}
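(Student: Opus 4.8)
The plan is to argue both directions inside the permutation skew left brace $\mc{G}=\mc{G}(X,r)$. Since $(X,r)$ is not Lyubashenko, the discussion preceding the statement shows that whenever $(X,r)$ is simple it is irretractable, so in the interesting case I may regard $X$ as a subset of $\mc{G}$ that generates $\mc{G}$ both additively and multiplicatively, and $(X,r)$ as the restriction of $(\mc{G},r_{\mc{G}})$; the relevant ideals are $\mc{D}=\free{\ga_x-\ga_y:x,y\in X}_+$ of $\mc{G}$ and $\mb{D}=\mc{D}'\rtimes\mc{D}$ of $\mc{S}$ from Lemma~\ref{lemideal}. I use throughout that an ideal $I$ of a skew left brace satisfies $i*b\in I$ for all $i\in I$ and that $(I,+)$ is a normal subgroup.

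For necessity, assume $(X,r)$ is simple. Irretractability is immediate, and then $\mc{D}\neq 0$ since $\ga$ is injective on $X$ and $|X|>1$. To see that $\mc{D}$ is the smallest non-zero ideal, take any non-zero ideal $I$ of $\mc{G}$: the map $x\mapsto\ga_x+I$ is an epimorphism of solutions from $(X,r)$ onto the image $\ov X$ of $X$ in $\mc{G}/I$ (which inherits the restriction of the solution of $\mc{G}/I$), so by simplicity either $|\ov X|=1$, giving $\ga_x-\ga_y\in I$ for all $x,y$ and hence $\mc{D}\s I$, or $X\to\ov X$ is an isomorphism. I rule out the latter: choose $0\neq\ga_a\in I$; since $\ga_a*\ga_x=\ga_{\lambda_a(x)}-\ga_x\in I$, injectivity forces $\lambda_a|_X=\id$, whence $\rho_a|_X\neq\id$ (as $\ga_a\neq 0$) and hence $\sigma_a|_X\neq\id$ by Lemma~\ref{lem1}; picking $x_0$ with $\sigma_a(x_0)\neq x_0$, the additive commutator $-\ga_{x_0}-\ga_a+\ga_{x_0}+\ga_a=-\ga_{x_0}+\ga_{\sigma_a(x_0)}$ lies in $I$ by normality of $(I,+)$, contradicting injectivity. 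Thus $\mc{D}\s I$ always.

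For the transitivity of $\mb{D}$ I would first verify that the partition of $X$ into $\mb{D}$-orbits (for the action of Lemma~\ref{actionS}) is a congruence of $(X,r)$, using the ideal property of $\mb{D}$ in $\mc{S}$; then $X\to X/{\sim_{\mb{D}}}$ is an epimorphism of solutions, so by simplicity $|X/{\sim_{\mb{D}}}|=1$ unless $\mb{D}$ acts trivially on $X$. In the latter case every $\ga_x-\ga_y$ and every $\sigma_x-\sigma_y$ acts trivially; using that the diagonal map $\mf{q}$ is bijective one deduces that all the permutations $\lambda_x|_X$ coincide and all the $\sigma_x|_X$ coincide, whence all $\ga_x$ are equal by Lemma~\ref{lem1}, i.e.\ $(X,r)$ is Lyubashenko, a contradiction. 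So $\mb{D}$ acts transitively on $X$.

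For sufficiency, assume the three conditions and let $f\colon(X,r)\to(Y,s)$ be a non-injective epimorphism. Choosing $x_1\neq x_2$ with $f(x_1)=f(x_2)$, irretractability gives $\ga_{x_1}\neq\ga_{x_2}$ while $\mc{G}(f)(\ga_{x_1})=\mc{G}(f)(\ga_{x_2})$, so $\Ker\mc{G}(f)\neq 0$ and, by minimality, $\mc{D}\s\Ker\mc{G}(f)$. The relation $\ga_{f(x)}=\ga_{f(y)}$ underlying this also forces $\sigma_{f(x)}=\sigma_{f(y)}$ (Lemma~\ref{lem1}), so $\mc{D}'$ is killed as well and $\mb{D}\s\Ker\mc{S}(f)$; hence $f$ is constant on each $\mb{D}$-orbit, and transitivity of $\mb{D}$ makes $f$ constant, so $|Y|=1$. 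Therefore every epimorphic image of $(X,r)$ is a singleton or isomorphic to $(X,r)$, i.e.\ $(X,r)$ is simple. The step I expect to be hardest is the transitivity half of necessity: checking carefully that an ideal of $\mc{S}$ induces a congruence of $(X,r)$ and that $\mb{D}$ acting trivially would already make the solution Lyubashenko; the remaining steps are routine applications of Lemmas~\ref{lem1} and~\ref{lemideal} together with the commutator identities in $\mc{G}$ and $\mc{S}$.
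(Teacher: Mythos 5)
Your argument is correct and follows essentially the same route as the paper's proof: the same quotient-by-an-arbitrary-nonzero-ideal argument with the additive commutator $-\ga_x-\ga_a+\ga_x+\ga_a$ to rule out the isomorphism case (giving $\mc{D}\s I$), the same orbit quotient $X\to X/{\sim_{\mb{D}}}$ for transitivity, and the same induced epimorphism $\mc{G}(f)$ together with transitivity of $\mb{D}$ for sufficiency. The only real difference is that you explicitly handle the possibility that $\mb{D}$ acts trivially on $X$ (your sketch via bijectivity of $\mf{q}$ does work, and can even be shortened: trivial action would force every $\ga_a\in\mc{D}$ to satisfy $\lambda_a=\sigma_a=\id$, i.e.\ $\mc{D}=0$, contradicting irretractability and $|X|>1$), a degenerate case the paper's proof passes over silently.
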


We now formulate the main result in what appears at first sight a more general context. However the following reasoning shows this is
actually not the case. Let $B$ be a finite skew left brace and let $(B,r_B)$ be its associated solution. Hence we have the associated
skew left brace $\mc{S}(B)=\mc{S}(B,r_B)$ that acts on the set $B$ via the group $(B,+)\rtimes (B,\circ)$. In other words, the element
$(a,b)$, with $a,b\in B$, acts on $x\in B$ as follows $(a,b)\cdot x=\sigma_a\lambda_b(x)$. If $X$ is a subset of $B$ and $I$ is
an ideal of $B$ then we say that $I$ acts on $X$ if $X$ is invariant under the action of the subgroup $I\rtimes I$ of $(B,+)\rtimes (B,\circ)$.

Assume $X$ is a subset of $B$ that is an additive set of generators of $(B,+)$. Assume  furthermore that $r_B$ restricts to a solution
$r=r_B|_{X\times X}$ on $X$ (i.e., $X$ is invariant under the action of $\mc{S}(B)$). Hence, we obtain natural morphisms of skew
left braces \[G(X,r)\to G(B,r_B)\to B\to \mc{G}(B,r_B)\to \mc{G}(X,r)\] (the first morphism is induced by the natural embedding
$(X,r)\to(B,r_B)$, the second follows because the morphism from the free group on $B$ to $B$ respects the defining relations of
$G(B,r_B)$, the third one is $a\mapsto(\sigma^{-1}_a,\lambda_a)$, and the last one is the restriction). Since the composition of all
these maps is the natural epimorphism $\ga\colon G(X,r)\to\mc{G}(X,r)$, the composed map $B\to\mc{G}(B,r_B) \to \mc{G}(X,r)$ is an epimorphism.
If also $(X,r)$ is not a Lyubashenko solution then there exist distinct $x,y\in X$ with different images in $\mc{G}(X,r)$. Hence if the ideal $V$
generated by all $x-y$, with $x,y\in X\s B$, is the smallest non-zero ideal of $B$ then it follows that $B\cong \mc{G}(B,r_B)\cong\mc{G}(X,r)$.

So, we obtain the following result.

\begin{thm}\label{simpleGEN}
    Let $(X,r)$ be finite non-degenerate solution of the Yang--Baxter equation that is not of Lyubashenko type.
    Then $(X,r)$ is simple if and only if $(X,r)$ is embedded into the associated solution $(B,r_B)$ of a finite skew left
    brace $B$ such that $B$ is additively generated by $X$, the ideal $V$ generated by the set $\{x-y:x,y\in X\}$
    is the smallest non-zero ideal of $B$, and the action of $V$ as an ideal of $B$ on $X$ is transitive.
\end{thm}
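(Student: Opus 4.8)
The plan is to derive Theorem~\ref{simpleGEN} from Theorem~\ref{simpleNL} by means of the reduction already sketched in the paragraph immediately preceding the statement, so the proof is essentially an exercise in unwinding that paragraph and checking both implications carefully.

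\emph{Necessity.} Assume $(X,r)$ is simple and not of Lyubashenko type. By Theorem~\ref{simpleNL} (applicable since $|X|>1$ follows from simplicity plus $|X|\ge 3$ as stipulated in the introduction), $(X,r)$ is irretractable, the ideal $\mc D$ is the smallest non-zero ideal of $\mc G=\mc G(X,r)$, and $\mb D$ acts transitively on $X$. Since $(X,r)$ is irretractable, the map $x\mapsto\ga_x$ embeds $X$ into $\mc G$, identifying $(X,r)$ with a subsolution of $(\mc G,r_{\mc G})$; moreover $\mc G$ is additively generated by $\ga(X)$. So I take $B=\mc G$ and $X\s B$ via this embedding. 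Then the ideal generated by $\{x-y:x,y\in X\}$ inside $B$ is exactly $\mc D$ (it contains all $\ga_x-\ga_y$ and is contained in any ideal containing them), which is the smallest non-zero ideal. Finally, one must check that the action of $\mc D$ \emph{as an ideal of $B$} on $X$ — i.e.\ the action of the subgroup $\mc D\rtimes\mc D$ of $(B,+)\rtimes(B,\circ)$ — coincides with (or at least has the same orbits as) the action of $\mb D=\mc D'\rtimes\mc D$ used in Theorem~\ref{simpleNL}. Here one uses Lemma~\ref{lemideal}: $\mc D'=\varphi(\mc D)$ is the image of $\mc D$ under the canonical map $\ga_a\mapsto\sigma_a^{-1}$, so the additive action of $\mc D'$ and the action via the $+$-part of $\mc D\s B$ agree on $X$ (both act through $\sigma$'s indexed by elements of $\mc D$), while the $\circ$-parts agree through $\lambda$'s; hence transitivity of $\mb D$ on $X$ yields transitivity of $V=\mc D$ on $X$. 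This gives the required skew left brace $B$.

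\emph{Sufficiency.} Conversely, suppose $(X,r)$ embeds into $(B,r_B)$ for a finite skew left brace $B$ with $X$ additively generating $B$, with $V:=\langle x-y:x,y\in X\rangle$ the smallest non-zero ideal of $B$, and with $V$ acting transitively on $X$. The key observation, already recorded in the paragraph before the statement, is the chain of skew left brace morphisms
\[
G(X,r)\to G(B,r_B)\to B\to\mc G(B,r_B)\to\mc G(X,r),
\]
whose composite is the canonical epimorphism $\ga\colon G(X,r)\to\mc G(X,r)$. Since $(X,r)$ is not Lyubashenko there are $x,y\in X$ with $\ga_x\ne\ga_y$ in $\mc G(X,r)$; hence $x-y$ has non-zero image under the composite $B\to\mc G(X,r)$, so the kernel of $B\to\mc G(X,r)$ is a proper ideal of $B$, and therefore does not contain $V$. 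But $V$ is the \emph{smallest} non-zero ideal, so the kernel must be $0$: thus $B\to\mc G(B,r_B)\to\mc G(X,r)$ is an isomorphism, and under it $X\s B$ is carried to $\ga(X)\s\mc G(X,r)$, with $V$ corresponding to the ideal $\mc D$ and the action of $V$ on $X$ corresponding to the action of (the $+\rtimes\circ$ group of) $\mc D$ on $X$. In particular $(X,r)$ is irretractable, $\mc D$ is the smallest non-zero ideal of $\mc G(X,r)$, and $\mc D$ acts transitively on $X$; by Lemma~\ref{lemideal} the group $\mb D=\mc D'\rtimes\mc D$ has the same orbits on $X$ as $\mc D\rtimes\mc D$, so $\mb D$ acts transitively as well. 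Theorem~\ref{simpleNL} then gives that $(X,r)$ is simple.

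\emph{Main obstacle.} The routine parts are the two directions of the implication once everything is identified; the delicate point I expect to spend the most care on is the bookkeeping in matching the two notions of ``$V$ acting on $X$'': the action of $V$ as an ideal of $B$ goes through the subgroup $V\rtimes V\s(B,+)\rtimes(B,\circ)$ acting by $(a,b)\cdot x=\sigma_a\lambda_b(x)$, whereas Theorem~\ref{simpleNL} phrases transitivity in terms of $\mb D=\mc D'\rtimes\mc D\s\mc S$. Reconciling these requires invoking Lemma~\ref{lemideal} (that $\mc D'=\varphi(\mc D)$ and that $\mb D$ is an ideal of $\mc S$) together with the remark after Lemma~\ref{actionS} that the action of $\mc S$ on $X$ has the same orbits as the action of $\langle\lambda_x,\rho_y\rangle$; this ensures that ``$V$ acts transitively on $X$'' is an intrinsic, representation-independent condition, which is what lets the two directions of the proof glue together cleanly.
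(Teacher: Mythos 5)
Your proposal is correct and takes essentially the same route as the paper, which obtains Theorem~\ref{simpleGEN} from Theorem~\ref{simpleNL} via the identification $B\cong\mc{G}(B,r_B)\cong\mc{G}(X,r)$ sketched in the paragraph preceding the statement (the paper leaves the matching of the action of $V\rtimes V$ with that of $\mb{D}=\mc{D}'\rtimes\mc{D}$ implicit, which you spell out via Lemma~\ref{lemideal}). One small wording slip: in the sufficiency direction the kernel of $B\to\mc{G}(X,r)$ fails to contain $V$ not because it is a proper ideal but because it misses $x-y$ (and $V$ is generated by such elements); your surrounding argument makes the intended deduction clear.
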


If in the previous theorem $B$ is a finite simple skew left brace then the conditions are equivalent with $B$ is additively generated
by $X$ and the action of the group $\free{\sigma_a\lambda_b:a,b\in B}$ on $X$ is transitive. It is worth mentioning that in
\cite[Remark 5.3]{CeOkn} examples of finite simple skew left braces of abelian type $B$ are given (so the additive structure
is commutative) such that for every orbit $\Omega$ of any element under the action of the $\lambda$-map, $B\ne\free{\Omega}_+$.

Assume $(B,+,\circ)$ and $X$  are as in Theorem~\ref{simpleGEN} and $V\ne B$. Then $B/V$ is a trivial skew left brace of cyclic type.
In particular, $B^{(2)}=B*B\s V$. As $B^{(2)}$ is an ideal of $B$, and, by assumption $V$ is the smallest non-zero ideal of $B$, it follows
that either $B^{(2)}=0$ or $B^{(2)}=V$. The former corresponds to the case where $B$ is a trivial skew left brace, and thus $(X,r)$ is a
left derived solution. In the latter case, $B/B^{(2)}$ is a trivial skew left brace of cyclic type and $B^{(2)}$ acts transitively on $X$.

If $B^{(2)}\ne 0$ then, since $B^{(3)}=B^{(2)}*B$ is an ideal of $B$, we get that either $B^{(3)}=0$ or $B^{(3)}=B^{(2)}$.

Assume, on the other hand, that  $B^{(2)}=0$. Note that then  $B$ cannot be of abelian type if $|X|>2$ (otherwise, the solution is the
twist map and it is simple if and only if $|X|\le 2$). Since by Theorem~\ref{simpleNL} $(X,r)$ is irretractable, it also follows
that $Z(B)$ is trivial. Indeed, if $a\in Z(B)$ and $y\in X$, then $\lambda_a=\id$ and $\sigma_a(y)\in X$ and $\sigma_a(y)=y$. So,
$\sigma_a=\id$ and $\lambda_a=\id$, and since the map $B\to \mathcal{G}(X,r)$, given as $a\mapsto(\sigma_a^{-1},\lambda_a)$, is an
isomorphism (see the comment just before Theorem \ref{simpleGEN}), we obtain that $a=0$. Moreover, the smallest non-zero ideal $V$
needs to coincide with the derived subgroup $[B,B]$. This follows from the fact that $X$ is an orbit and so all generating elements
in $V$ are commutators. Indeed, let $x-y$ be a generator of $V$, since $X$ is an orbit there exists $a\in B$ such that
$y=\sigma_a(x)=-a+x+a$, and thus $x-y=x-a-x+a=[x,-a]$. Finally, note that if in this case also $[B,B]\ne B$ then, since $X$
is a conjugacy class generating $B$, the group  $B/[B,B]$ needs to be cyclic. Note that this recovers the classification
of simple quandles given by Joyce in \cite{MR0682881} and which is stated in  part two of the following corollary.
Hence we have shown Theorem~\ref{CorSummary} stated in the introduction.
 
We finish this section with a remark on the derived solution of a simple solution.

\begin{rem}
    Clearly, since there are finite simple non-degenerate involutive solutions $(X,r)$ of cardinality larger than $2$,
    the left derived solution $(X,r')$ does not have to be simple if $(X,r)$ is simple. The converse however holds,
    as already noticed in \cite{MR1994219}.

    Indeed, assume $f\colon(X,r)\to (Y,s)$ is an epimorphism of finite non-degenerate solutions.
    Then one easily verifies that it induces an epimorphism $(X,r')\to (Y,s')$ between the  left derived
    solution $(X,r')$ of $(X,r)$ and the left derived solution $(Y,s')$ of $(Y,s)$. If $(X,r')$ is a simple
    solution, then we get that $|Y|=|X|$ (and thus $f$ is bijective) or $|Y|=1$. So, a finite non-degenerate
    solution $(X,r)$ is simple if its derived solution $(X,r')$ is simple.
\end{rem}

\section{Lyubashenko solutions}\label{secL}

In this section we describe the finite simple non-degenerate solutions that are of Lyubashenko type,
that is solutions of type (1) in Theorem~\ref{CorSummary}. In \cite{MR1848966} Etingof, Guralnick, and
Soloviev have described the indecomposable non-degenerate solutions of prime order. We now show that
Lyubashenko solutions that are simple have to be of prime order and indecomposable.

Recall that a solution $(X,r)$ is said to be \emph{primitive} if the group $\mc{S}(X,r)$ acts primitively on $X$ (cf. \cite{MR4388351}).
One may check that for a Lyubashenko solution this is the same as saying that $\free{\lambda,\rho}$ is a primitive permutation group.

\begin{lem}\label{primitive1}
    Let $(X,r)$ be a finite non-degenerate Lyubashenko solution. If $(X,r)$ is simple then $(X,r)$ is primitive.
\end{lem}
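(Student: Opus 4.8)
The plan is to establish the contrapositive: if a finite non-degenerate Lyubashenko solution $(X,r)$ is \emph{not} primitive, then it is not simple. For a Lyubashenko solution $r(x,y)=(\lambda(y),\rho(x))$ with $\lambda,\rho$ commuting permutations of $X$, the orbits of the action of $\mc{S}(X,r)$ on $X$ coincide with the orbits of $\free{\lambda,\rho}$ (by Lemma~\ref{actionS} and the remark following it). Since primitivity of $(X,r)$ means $\mc{S}(X,r)$ acts primitively, and since $(X,r)$ being simple forces indecomposability and hence transitivity of $\free{\lambda,\rho}$ on $X$ (for $|X|\ge 3$), the failure of primitivity means there is a nontrivial $\free{\lambda,\rho}$-invariant equivalence relation (a block system) $\sim$ on $X$.

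First I would verify that this block system $\sim$ is a congruence of the solution $(X,r)$. Because $\lambda$ and $\rho$ each permute the blocks, for $x\sim x'$ and $y\sim y'$ we get $\lambda(y)\sim\lambda(y')$ and $\rho(x)\sim\rho(x')$; hence $r$ descends to a well-defined map $\bar r$ on $(X/{\sim})\times(X/{\sim})$, which is again of Lyubashenko type with the induced commuting permutations $\bar\lambda,\bar\rho$, and in particular is a non-degenerate solution. So the quotient map $X\to X/{\sim}$ is an epimorphism of solutions. Since $1<|X/{\sim}|<|X|$ (a nontrivial block system has blocks that are neither singletons nor all of $X$, unless $\free{\lambda,\rho}$ is intransitive — but then one takes the orbit partition directly, which again gives a proper nontrivial quotient), the solution $(X,r)$ is not simple.

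The one point requiring a little care — and the step I expect to be the main (minor) obstacle — is the boundary bookkeeping: one must ensure the quotient is genuinely proper and nontrivial, i.e. neither an isomorphism nor a singleton. If $\free{\lambda,\rho}$ is transitive this is exactly the definition of a nontrivial block system; if $\free{\lambda,\rho}$ is intransitive, the orbit partition already does the job provided there is more than one orbit and at least one orbit has more than one element, which must hold since $|X|\ge 3$ and (as noted in the excerpt) a simple solution with $|X|\ge 3$ is indecomposable, forcing transitivity in the first place — so the intransitive case cannot actually arise under the simplicity hypothesis, and only the block-system case remains. Putting these together yields: simple $\Rightarrow$ $\free{\lambda,\rho}$ primitive $\Rightarrow$ $(X,r)$ primitive.
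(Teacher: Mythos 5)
Your proposal is correct and follows essentially the same route as the paper: simplicity gives indecomposability and hence transitivity of $\free{\lambda,\rho}$, and then a block system yields (thanks to the Lyubashenko form of $r$) a quotient solution on the set of blocks, which simplicity forces to be either trivial or isomorphic to $(X,r)$. Your contrapositive phrasing and the explicit check that $r$ descends to the blocks are just a more detailed write-up of the paper's argument.
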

\begin{proof}
    Since $(X,r)$ is simple, it is indecomposable. So, the group $\mc{S}(X,r)$ acts transitively on $X$. Assume that $X=X_1\cup\dotsb\cup X_n$,
    where $X_1,\dotsc,X_n$ are imprimitive blocks. We need to prove that either $n=1$ or $|X_1|=\dotsb=|X_n|=1$. To do so, consider $\ov{X}$
    the set of all imprimitive blocks. Because $(X,r)$ is a Lyubashenko solution, we get an induced solution $\ov{r}$ on $\ov{X}$ and a natural
    surjective morphism of solutions $(X,r)\to(\ov{X},\ov{r})$. Hence, since $(X,r)$ is simple, we conclude that either $\ov{X}$ trivial, i.e.,
    $n=1$ or $(\ov{X},\ov{r})$ is isomorphic to $(X,r)$, i.e., $|X_1|=\dotsb=|X_n|=1$.
\end{proof}

\begin{cor}\label{SimpleLYU}
    Assume $(X,r)$ is a finite non-degenerate Lyubashenko solution with $|X|\ge 3$, i.e., $r(x,y)=(\lambda(y),\rho(x))$
    for some commuting permutations $\lambda,\rho\in\Sym(X)$. Then $(X,r)$ is simple if and only if $|X|=p$
    for some prime $p$ and the group $\free{\lambda,\rho}$ is cyclic of order $p$.
    
    In other words, identifying $X$ with $\mb{Z}_{p}$, such solutions are isomorphic with solutions of the form
    $\mb{Z}_p^2\to \mb{Z}_p^2\colon(x,y)\mapsto(y+a,x+b) $ with $a,b\in \mb{Z}_{p}$ and $(a,b)\ne (0,0)$.
    \begin{proof}
        By Lemma~\ref{primitive1}, $H=\free{\lambda,\rho}$ is a primitive abelian permutation group on $X$. Hence, it is
        known (see for instance \cite[Section 7.2]{MR1357169}) that $H$ is  of prime order $p$, and the statement follows.
    \end{proof}
\end{cor}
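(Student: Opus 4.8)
The plan is to reduce the statement to the structure of finite primitive abelian permutation groups. First I would invoke Lemma~\ref{primitive1}: since $(X,r)$ is simple (and $|X|\ge 3$), it is primitive, which for a Lyubashenko solution means that the permutation group $H=\free{\lambda,\rho}\le\Sym(X)$ acts primitively on $X$. Now $H$ is abelian, because $\lambda$ and $\rho$ commute by the definition of a Lyubashenko solution. It is a classical fact (e.g.\ \cite[Section 7.2]{MR1357169}) that a primitive permutation group that is abelian must act regularly and be of prime order $p$; hence $|X|=p$ and $H$ is cyclic of order $p$. This gives the forward implication.

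For the converse, suppose $|X|=p$ and $H=\free{\lambda,\rho}$ is cyclic of order $p$. Identifying $X$ with $\mb{Z}_p$ and $H$ with the translations, we may write $\lambda(y)=y+a$ and $\rho(x)=x+b$ for some $a,b\in\mb{Z}_p$, not both zero (otherwise $H$ is trivial). Then $r(x,y)=(y+a,x+b)$, which is exactly the claimed normal form. To see such a solution is simple, note that any epimorphic image $(Y,s)$ of $(X,r)$ corresponds to an $H$-invariant partition of $X=\mb{Z}_p$ (the fibres of the morphism must be permuted by $\lambda$ and $\rho$, since these descend to $Y$); but $H$ acts transitively of prime degree $p$, so the only invariant partitions are the trivial one and the partition into singletons. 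Hence $|Y|=1$ or $(Y,s)\cong(X,r)$, i.e.\ $(X,r)$ is simple.

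The only real content is the group-theoretic input that a finite primitive abelian permutation group is cyclic of prime order; everything else is bookkeeping. I expect the main (minor) obstacle to be making the identification in the converse fully rigorous — namely checking that a cyclic order-$p$ subgroup of $\Sym(\mb{Z}_p)$ generated by two commuting elements of the form $\lambda,\rho$ can always be conjugated to the translation group, and that fibres of a morphism of Lyubashenko solutions are genuinely blocks for $\free{\lambda,\rho}$. Both are routine: the former is just that an element of order $p$ in $\Sym_p$ is a $p$-cycle, hence conjugate to the standard translation by $1$, and under this conjugation $\lambda,\rho$ become translations; the latter follows since a morphism $f$ of solutions satisfies $f\lambda=\lambda' f$ and $f\rho=\rho' f$, so $\lambda$ and $\rho$ preserve the fibres of $f$.
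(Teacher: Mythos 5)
Your proposal is correct and follows essentially the same route as the paper: invoke Lemma~\ref{primitive1}, use that a primitive abelian permutation group is regular of prime order, and conclude. The converse, which the paper leaves implicit with ``the statement follows'', you spell out via the observation that $\lambda$ and $\rho$ preserve the fibres of any epimorphism, which is the same block/covering-map argument the paper uses later (Example~\ref{exsemipri}) to note that indecomposable solutions of prime order are simple; so there is no substantive difference.
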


Note that there are examples of non-involutive simple non-degenerate solutions of prime order that are not of Lyubashenko type.
This is in contrast with the involutive case where there is only one involutive simple solution of prime order (see \cite{ESS99})
and it is of Lyubashenko type, i.e., of the form $r(x,y)=(\lambda(y),\lambda^{-1}(x))$ with $\lambda$ a full cycle on $X$.

\begin{ex}
    Let  $X=\{1,2,3\}$ and define $\rho_1=(2,3)$, $\rho_2=(1,3)$ and $\rho_3=(1,2)$ (this is the conjugation action in $\Sym(3)$
    if one identifies $X$ with the set of elements of order $2$ in $\Sym(3)$). Then the solution $r(x,y)=(y,\rho_y(x))$ is
    simple by Example~\ref{exsemipri}. Note that since all $\lambda_x=\id$ we have that $\sigma_x=\rho_x$ for each $x\in X$.
    Hence, for this example, $\mc{G}=\{(\rho_a^{-1},\id):a\in A\}\cong \Sym(3)$ and $\mc{C}=\{\rho_a:a\in A\}\cong \Sym(3)$.
    Clearly this example can be generalized by making the size of $X$ equal to any prime $p>2$.
\end{ex}

\section{Solutions with \texorpdfstring{$B^{(3)}=0$}{B(3)=0}}\label{secTR}

In this section we shall focus on finite simple non-degenerate solutions $(X,r)$ which are of type (2) in Theorem~\ref{CorSummary}.
So, in particular, these are solutions that are embedded in a solution determined by a finite skew left brace $B$ that has a unique
minimal non-zero ideal $V$ (recall that it necessarily contains all elements $x-y$ with $x,y\in X$) such that $B/V$ is a trivial
skew left brace of cyclic type and $B^{(3)}=0$.

Let us start with some elementary observations.

\begin{rem}\label{elform}
    Since $B$ is additively generated by $X$ and $x-y\in V$ for any $x,y\in X$, we may write $B=V+\free{x}_+$ for some (any) $x\in X$.
    In particular, if $k=|B/V|$ then each element of $B$ can be uniquely written as $v+ix$ for some $v\in V$ and $0\le i<k$. Note
    that $B\ne B^{(2)}$ implies $V\ne B$, and thus $k>1$.
\end{rem}

The condition $B^{(3)}=0$ implies the the solution $(X,r)$ is of special type.

\begin{rem}\label{lambdaprop}
    Because $V*B=0$, we get $\lambda_v=\id$ for each $v\in V$. In particular, since $kx\in V$, we have $\lambda_x^k=\lambda_{kx}=\id$.
    Moreover, as $x-y\in V$ for any $x,y\in X$, we obtain $\lambda_x=\lambda_{(x-y)+y}=\lambda_{(x-y)\circ y}=\lambda_{x-y}\lambda_y=\lambda_y$.
    Therefore, $\lambda_x=\lambda_y$ for any $x,y\in X$. In what follows we shall simply denote this map as $\lambda$. In particular, $(X,r)$
    is the so-called twisted rack solution (as defined in \cite{CJKVA2023}), i.e., $r(x,y)=(\lambda(y),\rho_y(x))$ for all $x,y\in X$.
    Also recall that if $B^{(2)}=0$ then $\lambda=\id$ and $(X,r)$ is a rack solution.
\end{rem}

Note that the converse of Remark~\ref{lambdaprop} is also true. Namely, if $(X,r)$ is a twisted rack solution, which
is a subsolution of a solution associated to a skew left brace $B$ with the listed properties, then $B^{(3)}=0$. Indeed,
in this case $\lambda_a=\lambda^{|a|}$ for $a\in B$, as $B$ is generated by the set $X$ (here $|a|$ means the additive order
of the image of $a$ in $B/V$). In particular, $\lambda_{y-x}=\id$ for any $x,y\in X$, so $(y-x)*B=0$. Then, since
$(a+b)*c=a*(\lambda_a^{-1}(b)*c)+\lambda_a^{-1}(b)*c+a*c$ for all $a,b,c\in B$, it follows that $V*B=0$ and thus $B^{(3)}=B^{(2)}*B=0$.

The equality $B^{(3)}=0$ implies also that $V$ is a trivial skew left brace, i.e., the additive and multiplicative operations on $V$
coincide and thus $V$ is simply a group. It turns out that this group is very special.

\begin{lem}\label{charsimp}
    The group $V$ is characteristically simple, i.e., $V=S_1+\dotsb+S_n$, a direct sum of pairwise isomorphic simple groups.
    In particular, if $V$ is abelian then $V\cong C_p^n$ is an elementary abelian $p$-group for some prime $p$, whereas if
    $V$ is non-abelian then $Z(V)=0$.
    \begin{proof}
        Let $N_1,\dotsc,N_m$ denote representatives of the isomorphism classes of minimal normal subgroups of $V$. Let $I_i$,
        for $1\le i\le m$, denote the sum of all minimal normal subgroups of $V$ isomorphic to $N_i$. Since $I_i$ is a characteristic
        subgroup of $V$ and $V$ is an ideal of $B$, we conclude that $I_i$ is an ideal of $B$. Since $V$ is the smallest non-zero ideal
        of $B$, it follows that there exists just one such `isotypic component', that is $m=1$ and $V=I_1$. Now, write $V$ as a sum
        $V=S_1+\dotsb+S_n$ of minimal normal subgroups which are all isomorphic (to $N_1$). If none of the $S_i$ is redundant, i.e.,
        $S_i$ is not contained in $\sum_{j\ne i}S_j$, then the sum $S_1+\dotsb+S_n$ is direct. The remaining part of lemma is obvious.
    \end{proof}
\end{lem}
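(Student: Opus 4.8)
The plan is to prove that \emph{every} characteristic subgroup of the group $V$ is an ideal of the skew left brace $B$; the lemma then follows quickly from the minimality of $V$ together with the classical structure theory of finite characteristically simple groups.

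The first step is to unpack what $B^{(3)}=0$ gives about $V$. Since $V*B=0$ we have $\lambda_v=\id$ for every $v\in V$ (this is exactly Remark~\ref{lambdaprop}), and since $V$ is an ideal of $B$ it is $\lambda$-invariant and additively normal; hence for every $a\in B$ the map $\lambda_a$ restricts to an automorphism of the group $(V,+)$, and for every $b\in B$ the additive conjugation $v\mapsto -b+v+b$ restricts to an automorphism of $(V,+)$ as well. Now let $W$ be a characteristic subgroup of $(V,+)$. Being characteristic, $W$ is invariant under all of these restricted automorphisms, so $W$ is $\lambda$-invariant and normal in $(B,+)$; and for $w\in W\subseteq V$ and $b\in B$ we have $w*b=\lambda_w(b)-b=b-b=0\in W$. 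By the characterisation of ideals recalled in Section~\ref{sec:3}, $W$ is therefore an ideal of $B$.

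Since $V$ is the smallest non-zero ideal of $B$, this forces $V$ to have no characteristic subgroup other than $0$ and $V$, i.e., $V$ is a characteristically simple finite group; by the classical structure theorem this means $V=S_1+\dotsb+S_n$ with $S_1,\dotsc,S_n$ pairwise isomorphic simple groups, which is the first assertion. (Equivalently, and closer to the route one might take inside the proof: the homogeneous components of $\Soc(V)$ — the sums of all minimal normal subgroups of a fixed isomorphism type — are characteristic in $V$, hence ideals of $B$, hence by minimality there is exactly one of them and it equals $V$; discarding redundant minimal normal summands then yields the direct decomposition.) For the dichotomy: if $V$ is abelian then its primary components are characteristic, so $V$ is a $p$-group, and then $pV$ is characteristic, hence $pV=0$, so $V$ is elementary abelian, i.e., $V\cong C_p^n$. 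If $V$ is non-abelian then $Z(V)$ is a characteristic subgroup which is proper (otherwise $V$ would be abelian), hence $Z(V)=0$.

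The only step that is not pure bookkeeping is showing that characteristic subgroups of $V$ are ideals of $B$, and inside that the single point that genuinely uses the hypothesis $B^{(3)}=0$ is the identity $\lambda_v=\id$ for $v\in V$: it is precisely this that makes $w*b$ vanish and so lets the $*$-closure condition hold for free. Everything else — that $V$ being an ideal makes the $\lambda$-maps and the additive conjugations restrict to automorphisms of $(V,+)$, together with the structure theory of finite characteristically simple groups — is entirely standard.
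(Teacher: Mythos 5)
Your proposal is correct and takes essentially the same route as the paper: the paper's proof also hinges on the observation that characteristic subgroups of $V$ (there, the isotypic components of its socle) are ideals of $B$ and hence ruled out by the minimality of $V$; you simply make explicit the verification the paper leaves implicit, namely that $\lambda_v=\id$ on $V$ (coming from $B^{(3)}=0$, i.e.\ $V*B=0$) forces $w*b=0$, and you invoke the structure theorem for finite characteristically simple groups rather than assembling the direct decomposition from minimal normal subgroups by hand. One cosmetic point: the ideal criterion you quote is recalled in the Preliminaries (Section 2), not in Section~\ref{sec:3}.
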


Further, two particular automorphisms of $V$ will play a crucial role. Namely, as $V$ is invariant with respect to
conjugation by $x$ and $\lambda$, it follows that there exist automorphisms $A,A'\in\Aut(V)$ such that
\[x+v-x=Av\quad\text{and}\quad\lambda(v)=A'v\] for each $v\in V$ (i.e., $A$ and $A'$ are restrictions of $\sigma_{-x}$
and $\lambda$, respectively, to $V$). Moreover, let \[\mc{A}=\free{A,A'}\] denote the subgroup of $\Aut(V)$ generated
by $A$ and $A'$.

\begin{lem}\label{Asim}
    $V$ has no non-trivial $\mc{A}$-invariant normal subgroups.
    \begin{proof}
        Assume that $I\ne 0$ is an $\mc{A}$-invariant normal subgroup of $V$. First, note that if $w\in I$ then
        $(v+ix)+w-(v+ix)=v+A^iw-v\in I$ for each $v\in V$ and $0\le i<k$. Hence, $I$ is a normal subgroup of $(B,+)$.
        Clearly, $I$ is also $\lambda$-invariant. Finally, $I*B\s V*B=0$ assures that $I$ is non-zero ideal contained in $V$, and thus $I=V$.
    \end{proof}
\end{lem}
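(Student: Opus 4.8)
The plan is to show that any non-zero $\mc{A}$-invariant normal subgroup $I$ of $V$ is in fact a non-zero ideal of the skew left brace $B$; since $I\s V$ and $V$ is, by hypothesis, the smallest non-zero ideal of $B$, this forces $I=V$. Recall that an additive subgroup $I$ of $B$ is an ideal precisely when $(I,+)$ is a normal subgroup of $(B,+)$, $I$ is $\lambda$-invariant, and $i*b\in I$ for all $i\in I$ and $b\in B$. So the whole argument reduces to checking these three conditions for $I$.

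For additive normality, I would use the normal form from Remark~\ref{elform}: with $k=|B/V|$, every $b\in B$ is uniquely $b=v+ix$ with $v\in V$ and $0\le i<k$. By the definition of $A$, conjugation by $x$ acts on $V$ as $A$, so conjugation by $ix$ acts on $V$ as $A^i$ (a one-line induction on $i$). Hence, for $w\in I$, the element $ix+w-ix=A^iw$ lies in $I$ because $I$ is $\mc{A}$-invariant, and then $v+A^iw-v\in I$ because $I$ is normal in $V$; therefore $b+w-b\in I$. For $\lambda$-invariance, note that $\lambda_v=\id$ for all $v\in V$ (Remark~\ref{lambdaprop}), so the homomorphism $(B,\circ)\to\Aut(V)$, $a\mapsto\lambda_a|_V$, factors through the cyclic group $B/V$, which is generated by the class of $x$; thus $\lambda_a|_V$ is a power of $\lambda_x|_V=A'$, and $\mc{A}$-invariance of $I$ gives $\lambda_a(I)=I$. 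Finally, since $I\s V$ and $V*B=0$ (we are in the case $B^{(3)}=0$ throughout this section), we get $I*B\s V*B=0\s I$. Hence $I$ is a non-zero ideal of $B$ contained in $V$, and $I=V$.

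There is no genuine obstacle here; the only point needing care is keeping the additive and multiplicative structures of $B$ apart — specifically, recognising that conjugation on $V$ is controlled, through the additive normal form, by powers of the single automorphism $A$, while the $\lambda$-action on $V$ is controlled, through the cyclic quotient $B/V$, by powers of $A'$, so that invariance under $\mc{A}=\free{A,A'}$ is exactly the hypothesis needed to promote $I$ to an ideal of $B$.
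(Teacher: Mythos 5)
Your argument is correct and is essentially the paper's own proof: you verify that $I$ is a normal subgroup of $(B,+)$ via the conjugation formula $(v+ix)+w-(v+ix)=v+A^iw-v$, that it is $\lambda$-invariant (since $\lambda_a|_V$ is a power of $A'$), and that $I*B\s V*B=0$, so $I$ is a non-zero ideal contained in $V$ and minimality of $V$ gives $I=V$. The only difference is that you spell out the $\lambda$-invariance step that the paper dismisses as ``clearly''.
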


Also the following observation will be useful.

\begin{lem}\label{Bprop}
    Let $a=v+ix\in B$ for some $v\in V$ and $0\le i<k$.
    \begin{enumerate}
        \item $a\in Z(B,+)$ if and only if $Av=v$ and $A^i=\sigma_v$ on $V$.
        \item $V\cap Z(B,+)=0$.
        \item $\Soc(B)=0$.
    \end{enumerate}
    \begin{proof}
        If $a\in Z(B,+)$ then $v+ix=a=x+a-x=x+v-x+ix=Av+ix$ implies $Av=v$. Further, if $w\in V$ then the equality $w+v+ix=w+a=a+w=v+ix+w=v+A^iw+ix$
        assures that $A^iw=-v+w+v=\sigma_v(w)$. Conversely, assume that $Av=v$ and $A^i=\sigma_v$ on $V$. Then $x+a-x=x+v-x+ix=Av+ix=v+ix=a$ and
        \[w+a-w  =w+v+ix-w=w+v-A^iw+ix=w+v-(-v+w+v)+ix=v+ix=a\] for each $w\in V$. Hence $a\in Z(B,+)$, and (1) is proved.

        Next, if $V$ is abelian then $V\cap Z(B,+)$ is an ideal of $B$ contained in $V$. So, it must be equal to zero because
        otherwise we would get $V\cap Z(B,+)=V$ and thus $V\s Z(B,+)$. Hence, the group $(B,+)$ would be abelian, as $B/V$
        is a cyclic group. Therefore, $X$ would be a singleton because the action of $V$ on $X$ is transitive,
        a contradiction. Whereas if $V$ is non-abelian then $Z(V)=0$ by Lemma~\ref{charsimp} and thus $V\cap Z(B,+)\s Z(V)=0$,
        which finishes the proof of (2).

        Finally, since $\Soc(B)$ is an ideal of $B$ and $V\cap\Soc(B)\s V\cap Z(B,+)=0$ by (2), it follows that $\Soc(B)=0$
        because otherwise $\Soc(B)$ would be a non-zero ideal not containing $V$, a contradiction. Thus (3) follows,
        and the proof is complete.
    \end{proof}
\end{lem}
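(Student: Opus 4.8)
The plan is to establish the three assertions in order: (1) by a direct computation in $(B,+)$ using the normal form of Remark~\ref{elform}, and then (2) and (3) from the minimality of $V$ --- note that (1) is not actually needed for (2) or (3), and (3) will be immediate from (2).

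For (1), since $B=V+\free{x}_+$ and the centraliser of a subset of a group is a subgroup, it suffices to test centrality of $a=v+ix$ against $x$ and against every $w\in V$. Conjugation by $x$ equals $\sigma_{-x}$, which restricts to $A$ on $V$ and to the identity on the abelian subgroup $\free{x}_+$, so $x+a-x=\sigma_{-x}(v)+\sigma_{-x}(ix)=Av+ix$; by uniqueness of the normal form this equals $a$ exactly when $Av=v$. For $w\in V$, writing $ix+w=\sigma_{-ix}(w)+ix=A^iw+ix$ (using $\sigma_{-x}|_V=A$ and $\sigma_{c+d}=\sigma_d\sigma_c$) gives $a+w=v+(ix+w)=v+A^iw+ix$, whereas $w+a=(w+v)+ix$; comparing normal forms, these agree for all $w\in V$ precisely when $w+v=v+A^iw$, i.e. $A^iw=-v+w+v=\sigma_v(w)$. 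Conversely, if $Av=v$ and $A^i=\sigma_v$ on $V$ then the same two computations show $a$ centralises $x$ and all of $V$, hence all of $B$. The only real care needed is the $\sigma$-bookkeeping --- that $\sigma_{-x}|_V=A$ while $\sigma_x|_V=A^{-1}$, so that $\sigma_{-ix}|_V=A^i$ --- which is the only mild obstacle in the whole lemma.

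For (2) I would split on whether $V$ is abelian. If $V$ is non-abelian, then $Z(V)=0$ by Lemma~\ref{charsimp}; any $w\in V\cap Z(B,+)$ in particular commutes with every element of $V$, so $V\cap Z(B,+)\s Z(V)=0$. If $V$ is abelian, put $W=V\cap Z(B,+)$; I claim $W$ is an ideal of $B$. Indeed, $W$ is a central --- hence normal --- subgroup of $(B,+)$; each $\lambda_a$ is an automorphism of $(B,+)$, hence preserves the characteristic subgroup $Z(B,+)$ and the ideal $V$, so $\lambda_a(W)=W$; and for $w\in W\s V$ we have $w*b=\lambda_w(b)-b=b-b=0\in W$, since $\lambda_w=\id$ by Remark~\ref{lambdaprop}. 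As $V$ is the smallest non-zero ideal of $B$, either $W=0$, as wanted, or $W=V$; in the latter case $V\s Z(B,+)$, so $(B,+)$ --- generated by the central subgroup $V$ together with the abelian subgroup $\free{x}_+$ --- is abelian, whence $\sigma_v=\id$ on $B$ for every $v\in V$, and together with $\lambda_v=\id$ this forces the ideal $V$ to act trivially on $X$, contradicting the transitivity of that action when $|X|>1$. Hence $W=0$.

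Finally, (3) follows at once: $\Soc(B)\s Z(B,+)$, so $V\cap\Soc(B)\s V\cap Z(B,+)=0$ by (2); since $\Soc(B)$ is an ideal of $B$ and $V$ is the smallest non-zero ideal, a non-zero $\Soc(B)$ would contain $V$, contradicting $V\cap\Soc(B)=0$. Therefore $\Soc(B)=0$, which completes the proof.
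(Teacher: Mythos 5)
Your proposal is correct and follows essentially the same route as the paper: a direct normal-form computation against $x$ and against elements of $V$ for (1), the abelian/non-abelian case split combined with the minimality of $V$ (and the transitivity of its action on $X$, which would fail if $(B,+)$ were abelian) for (2), and the observation that $\Soc(B)\subseteq Z(B,+)$ is an ideal for (3). Your extra bookkeeping — verifying that $V\cap Z(B,+)$ is indeed an ideal via $\lambda$-invariance of the centre and $\lambda_w=\id$ for $w\in V$ — only makes explicit what the paper leaves implicit.
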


Next, notice that the fourth condition in part (2) of Theorem~\ref{CorSummary} says that the action of $V$ on $X$ is transitive.
Since $\lambda_v=\id$ for all $v\in V$, this means that the action of $\mc{C}=\free{\sigma_v:v\in V}$ on $X$ is transitive. That is,
the (additive) conjugation action of $V$ on $X$ is transitive. So, \[X=\mc{C}_{(B,+)}(x)=\mc{C}_{(V,+)}(x)=\{v+x-v:v\in V\}=\{v-Av+x:v\in V\}.\]

Further, since $X$ is $\lambda$-invariant, we get \[\lambda(x)=u_0+x-u_0\] for some $u_0\in V$ (note that the element $u_0$ is in general
not uniquely determined, but since $u_0-Au_0=\lambda(x)-x=\lambda_x(x)-x=x*x$, the element $u_0$ is unique as long as
$A$ has no non-zero fixed points).

Define \[v_n=u_0-A^nu_0\in V\] for $n\ge 0$. Note that $v_0=0$ and $v_n=v_1+Av_1+\dotsb+A^{n-1}v_1$ for $n\ge 1$. Moreover, $\lambda(x)=v_1+x$.
Now, if $v\in V$ and $0\le i<k$ then using the above notation we can write
\begin{align}\label{lam}
    \begin{aligned}
        \lambda(v+ix) & =\lambda(v)+i\lambda(x)=A'v+i(u_0+x-u_0)
          =A'v+u_0+ix-u_0\\
          &=A'v+u_0-A^iu_0+ix=A'v+v_i+ix.
    \end{aligned}
\end{align}
Furthermore, 
\begin{align*}
    v_1+AA'v+x & =v_1+x+A'v=\lambda(x)+\lambda(v)=\lambda(x+v)\\
     & =\lambda(Av+x)=\lambda(Av)+\lambda(x)=A'Av+v_1+x.
\end{align*}
Therefore, $v_1+AA'v=A'Av+v_1$. So, replacing $v$ by $A^{-1}A'^{-1}v$ we may rewrite the last equality as $AA'A^{-1}A'^{-1}v=-v_1+v+v_1$.
Finally, since $v\in V$ is arbitrary the latter equality is equivalent to
\begin{equation}
    [A,A']=\sigma_{v_1}\text{ on }V.\label{C2}
\end{equation}

Our further investigation naturally splits into two cases (depending whether $V$ is abelian or non-abelian),
which nevertheless shares some similarities. We first treat the abelian case.

\subsection{The abelian case}

So, assume that the group $V$ is abelian. In this case Lemma~\ref{charsimp} implies that $V\cong C_p^n$. Moreover, since $kx\in V$,
we get $kx\in V\cap Z(B,+)=0$ by Lemma~\ref{Bprop}(2). Hence, $\free{x}_+\cong C_k$ and $(B,+)\cong V\rtimes\free{x}_+\cong C_p^n\rtimes C_k$.

Moreover, we have $A-1\in\Aut(V)$ (here and later we denote the identity map $\id_V$ of $V$ as $1$, for simplicity).
Indeed, if $Av=v$ for some $0\ne v\in V$ then $x+v-x=Av=v$, and thus $v\in V\cap Z(B,+)=0$ by Lemma~\ref{Bprop}(3), a contradiction. 

Further, $kx=0$ implies that $A^k=1\quad\text{and}\quad A'^k=1$, and thus the (multiplicative) orders
$o(A)$ and $o(A')$ of $A$ and $A'$, respectively, divide $k$. Furthermore, we claim that 
\begin{equation}\label{OrdCond}
    A^i\ne 1\text{ or }A'^i\ne 1\text{ or }1+A'+\dotsb+A'^{i-1}\ne 0\text{ for each }0<i<k.
\end{equation}
Indeed, suppose $A^i=A'^i=1$ and $1+A'+\dotsb+A'^{i-1}=0$ for some $0<i<k$. Then $ix\in Z(B,+)$ by Lemma~\ref{Bprop}(1).
Because of \eqref{lam} we have $\lambda(ix)=u_0-A^iu_0+ix=ix$. In particular, the subgroup $I=\free{ix}_+\s Z(B,+)$
is $\lambda$-invariant. Next, by an easy induction argument and because $1+A'+\dotsb+A'^{i-1}=0$, we obtain
  \[
    \lambda^i(x)  =\lambda^{i-1}(v_1+x)=A'^{i-1}v_1+\lambda^{i-1}(x)=A'^{i-1}v_1+\dotsb+A'v_1+v_1+x=x.\]
Hence, $\lambda^i(v+jx)=\lambda^i(v)+j\lambda^i(x)=A'^iv+jx=v+jx$ for all $v\in V$ and $0\le j<k$.
Therefore, $\lambda^i=\id$ and, in consequence, $(ix)*b=\lambda_{ix}(b)-b=\lambda^i(b)-b=0$ for each $b\in B$,
i.e., $I*B=0$. Summarising, $I$ is a non-zero ideal of $B$, which clearly does not contain $V$, a contradiction.

Finally, since $\sigma_{v_1}=\id$ on $V$, as $V$ is abelian, the condition \eqref{C2} yields $AA'=A'A$. In particular,
the group $\mc{A}=\free{A,A'}$ is abelian. Furthermore, if $\mc{R}$ denotes the subring of the endomorphism ring $\End(V)$
generated by $A$ and $A'$ then Lemma~\ref{Asim} assures that $V$ is a simple $\mc{R}$-module. A useful remark is that,
since $V*B=0$, an ideal of $B$ contained in $V$ is a synonym for $\mc{R}$-submodule of $V$.

\begin{rem}\label{primitive}
    \phantom{}
    \begin{enumerate}
        \item Because the simple $\mc{R}$-module $V$ is faithful, the commutative ring $\mc{R}$ is primitive. Hence, $\mc{R}$ is
        a field extension of $\mb{F}_p$, the field with $p$ elements. Moreover, $V\cong\mc{R}$ as $\mc{R}$-vector spaces,
        and thus $|\mc{R}|=|V|=p^n$, which yields $\mc{R}\cong\mb{F}_{p^n}$, the field with $p^n$ elements. In particular,
        the multiplicative orders $o(A)$ and $o(A')$ are divisors of $p^n-1$ (and thus both orders are not divisible by $p$),
        and $\lcm(o(A),o(A'))$ divides $\gcd(k,p^n-1)$. Furthermore, as $A\ne 1$, we must have $\gcd(k,p^n-1)\ne 1$.
        \item Write $k=p^sm$, where $s\ge 0$ and $p\nmid m$. If $C\in\mc{A}=\free{A,A'}$ then $C^k=1$ as $A^k=A'^k=1$
        and $AA'=A'A$. Hence, $0=C^k-1=(C^m-1)^{p^s}$ and, in consequence, $C^m=1$ as $\mc{R}$ is a field. In particular $o(C)$ is a
        divisor of $m$ (hence $o(C)$ is not divisible by $p$) and $|\mc{A}|\le m^2$. Moreover, $A\ne 1$ implies
        $1+A+\dotsb+A^{m-1}=0$ and thus, by (1), we get \[n=\dim_{\mb{F}_p}\mc{R}\le m(m-1)\le k(k-1).\] Note that
        if $\mc{A}=\free{A}$ (for example, if $A=A'$ and $\lambda$ is an additive conjugation by $x$) then we get a
        stronger bound $n<m\le k$. Further, $k$ cannot be a $p$-power as $m>1$.
        \item If $A'=1$ and $v_1=0$ (i.e., $B^{(2)}=0$) then $o(A)=k$. Indeed, if $o(A)=i<k$ then we have $0\ne ix\in Z(B,+)$,
        a contradiction (see the comments before Theorem~\ref{CorSummary} or Lemma~\ref{Bprop}(3), as $B^{(2)}=0$ yields $Z(B,+)=\Soc(B)=0$).
        \item Assume $A'=1$ and $v_1\ne 0$, in particular $B^{(2)}\ne 0$. Then the condition \eqref{OrdCond} is equivalent with:
        \begin{equation}
            \text{if }A^i=1\text{ for some }0<i<k\text{ then $p$ does not divide }i.\label{a'=1}
        \end{equation}
        Next, since $\lambda^k=\id$, we get $x=\lambda^k(x)=kv_1+x$. So, $kv_1=0$ and thus $p$ divides $k$ (i.e., $s>0$)
        because, by assumption, $v_1\ne 0$. Further, $A^m=1$ by (2), which yields $A^{p^{s-1}m}=1$, and thus $s=1$ by \eqref{a'=1}
        as $0<p^{s-1}m<k$. Hence, $k=pm$. Finally, we claim that $o(A)=m$. Indeed, if $o(A)<m$ then $0<o(A)p<pm=k$ and so $A^{o(A)p}=1$
        leads to a contradiction with \eqref{a'=1}.
        \item Assume $A'\ne 1$. Then, since $\mc{R}$ is a field, the condition \eqref{OrdCond} is equivalent with $A^i\ne 1$
        or $A'^i\ne 1$ for $0<i<k$, because $A'^i-1=(A'-1)(1+A'+\dotsb+A'^{i-1})$ yields that $A'^i\ne 1$ if and only if
        $1+A'+\dotsb+A'^{i-1}\ne 0$. Hence, we cannot have $i=\lcm(o(A),o(A'))<k$ because $A^i=1$ and $A'^i=1$. Therefore,
        in this case, the condition \eqref{OrdCond} is equivalent with $k=\lcm(o(A),o(A'))$ and thus $p$ does not divide $k$.
    \end{enumerate}
\end{rem}

Therefore, we have proved the last part of the following result which is a refinement of Theorem~\ref{CorSummary}(2) in case
$B^{(3)}=0$ and $V$ is abelian.

\begin{thm}\label{coro1*}
    Assume that:
    \begin{itemize}
        \item $V$ is a finite elementary abelian $p$-group of order $p^n$ (for some prime $p$ and some $n\ge 1$),
        \item $C$ is a finite cyclic group of order $k$ (for some $k>1$) with a generator denoted by $x$,
        \item  $A,A'\in\Aut(V)$ are commuting automorphisms of orders dividing
        $\gcd(k,p^n-1)$ such that $A\ne 1$, $u_0\in V$ (put $v_1=u_0-Au_0\in V$) and:
        \begin{itemize}
            \item if $A'=1$ and $v_1=0$ then $k=o(A)$,
            \item if $A'=1$ and $v_1\ne 0$ then $k=o(A)p$, 
            \item if $A'\ne 1$ then $k=\lcm(o(A),o(A'))$,
        \end{itemize}
        \item $V$ is a simple $\mc{R}$-module, where $\mc{R}$ is the subring of $\End(V)$ generated by $A$ and $A'$
        (and thus $\mc{R}$ is a field with $p^n$ elements).
    \end{itemize}
    Let $(B,+)=V\rtimes C$ (we shall denote the elements $a\in B$ as sums $a=v+ix$ with $v\in V$ and $0\le i<k$, and write $|a|=i$),
    where the action of $C$ on $V$ is determined by the automorphism $A$ (i.e., $x+v-x=Av$ for $v\in V$). Further, let
    $a\circ b=a+\lambda^{|a|}(b)$ for $a,b\in B$, where $\lambda\colon B\to B$ is defined as $\lambda(v+ix)=A'v+u_0-A^iu_0+ix$.
    Then $(B,+,\circ)$ is a skew left brace and the solution $(B,r_B)$ determined by $B$ restricts to a (finite non-degenerate irretractable)
    simple solution $(X,r)$ on the set $X=\{v+x-v:v\in V\}$, the additive conjugacy class of $x$. Further, $X=V+x$ and
    \[r(v+x,w+x)=(A'w+v_1+x,-A^{-1}w+(AA')^{-1}(v-v_1)+w+x).\]

    Conversely, finite skew left braces $B$ that define (finite non-degenerate irretractable) simple solutions $(X,r)$
    of type $(2)$ in Theorem~\ref{CorSummary}, with $B^{(3)}=0$ and $V$ abelian, satisfy all the conditions stated above.
    \begin{proof}
        We only need to prove that the listed conditions are sufficient for $(X,r)$ to be a finite non-degenerate simple solution.
        To do this it is enough to check that the conditions stated in Theorem~\ref{CorSummary}(2) hold.
        
        First, we check that $\lambda$ is an automorphism of $(B,+)$. Let $a=v+ix\in B$ and $b=w+jx\in B$,
        where $v,w\in V$ and $0\le i,j<k$. If $0\le l<k$ satisfies $i+j\equiv l\pmod k$ then
        \begin{align*}
            \lambda(a+b) & =\lambda(v+A^iw+(i+j)x)
             =\lambda(v+A^iw+lx)
             =A'v+A'A^iw+u_0-A^lu_0+lx
        \end{align*}
        and
        \begin{align*}
            \lambda(a)+\lambda(b) & =A'v+u_0-A^iu_0+ix+A'w+u_0-A^ju_0+jx\\
            & =A'v+u_0-A^iu_0+A^iA'w+A^iu_0-A^{i+j}u_0+(i+j)x\\
            & =A'v+A^iA'w+u_0-A^lu_0+lx
        \end{align*}
        because $A^k=1$ yields $A^{i+j}=A^l$. Therefore, as $AA'=A'A$, we conclude that
        $\lambda(a+b)=\lambda(a)+\lambda(b)$, and thus indeed $\lambda\in\Aut(B,+)$ (bijectivity of $\lambda$ is obvious).

        Moreover, $(B,+,\circ)$ is a skew left brace, because if $a,b,c\in B$ (with $a$ and $b$ written
        in the form as above) then we get
        \begin{align}
            \begin{aligned}\label{brace}
                a\circ(b+c) & =a+\lambda^{|a|}(b+c)=a+\lambda^{|a|}(b)+\lambda^{|a|}(c)\\
                 & =a+\lambda^{|a|}(b)-a+a+\lambda^{|a|}(c)=a\circ b-a+a\circ c
            \end{aligned}
        \end{align}
        and 
        \begin{equation}\label{brace2}
            \lambda_{a+\lambda_a(b)}=\lambda_a\lambda_b
        \end{equation}
        because $|a+\lambda_a(b)|\equiv|a|+|b|\pmod k$ (as $a+\lambda_a(b)\in V+lx$) and $\lambda^k=\id$. Note that the
        equality $\lambda^k=\id$ is a consequence of our assumptions $A'^k=1$ and $\lambda^k(x)=x$. Indeed, the latter follows
        because if $w_n=A'^{n-1}v_1+\dotsb+A'v_1+v_1\in V$ for $n\ge 0$ (here and later we understand that expressions such as $w_n$
        are equal to zero in case $n=0$; this convention will allow us to omit splitting such formulas into subcases when $n=0$ or $n>0$)
        then
        \begin{align}\label{lambda-n}
            \begin{aligned}
                \lambda^n(x) & =\lambda^{n-1}(v_1+x)=\lambda^{n-1}(v_1)+\lambda^{n-1}(x)
                 =A'^{n-1}v_1+\dotsb+A'v_1+v_1+x=w_n+x.
            \end{aligned}
        \end{align}
        In particular, if $A'=1$ and $v_1\ne 0$ then, by assumptions, $k=o(A)p$. Thus $w_k=kv_1=0$ and \eqref{lambda-n} yields $\lambda^k(x)=x$.
        Whereas if $A'\ne 1$ then $A'^k=1$ implies $1+A'+\dotsb+A'^{k-1}=0$. So, again $w_k=0$ and $\lambda^k(x)=x$ by \eqref{lambda-n}.
    
        Next, if $a,b\in B$ are written as before then \eqref{lambda-n} yields
        \begin{align}\label{a*b}
            \begin{aligned}
                a*b & =\lambda_a(b)-b
                 =\lambda^i(w+jx)-(w+jx)
                 =\lambda^i(w)+j\lambda^i(x)-jx-w\\
                & =A'^iw+j(w_i+x)-jx-w
                 =A'^iw+w_i+Aw_i+\dotsb+A^{j-1}w_i-w\in V.
            \end{aligned}
        \end{align} 
        Hence, $B^{(2)}\s V$ and thus, because $B^{(2)}$ is an $\mc{R}$-submodule of $V$, we get that  $B^{(2)}=V$ if $B^{(2)}\ne 0$.
        Note that  $B^{(2)}=0$ implies that $A'=1$ (just take $i=1$ and $j=0$ in \eqref{a*b}) and consequently $v_1=w_1=0$ (take $i=1$
        and $j=1$ in \eqref{a*b}). So, $B^{(2)}=0$ is equivalent with $A'=1$ and $v_1=0$. In this case it is easily seen that $V=[B,B]$. 
        
        Next, we show that $V$ is the smallest non-zero ideal of the skew left brace $B$. To do so it is sufficient to prove that for any
        non-zero ideal $I$ of $B$ we have $I\cap V \ne 0$. Indeed, since $V$ is a simple $\mc{R}$-module and  $I\cap V$ is an $\mc{R}$-submodule
        of $V$, then $I\cap V\ne 0$  yields $I\cap V=V$ and thus $V\s I$. So, assume that $I$ is a non-zero ideal of $B$. Suppose that $I\cap V=0$.
        Choose $0\ne a\in I$ and write $a=v+ix$ for some $v\in V$ and $0\le i<k$. If $v\ne 0$ then $b=x+a-x=Av+ix\in I$ satisfies $b\ne a$
        because, by the assumptions $A-1$ is invertible, and thus $Av\ne v$. Hence, we have $0\ne b-a=Av-v\in I\cap V$, a contradiction.
        Thus, $v=0$ and $a=ix$ with $i>0$ as $a\ne 0$. Since $I$ is a normal subgroup of $(B,+)$, we also get $w-A^iw+a=w+a-w\in I$ and,
        in consequence, $w-A^iw\in I$ for all $w\in V$. Hence, $(A^i-1)V\s I\cap V=0$. So, $A^i=1$ and thus $o(A)$ divides $i$;
        let us write $i=o(A)j$ for some $j$. So, if $B^{(2)}=0$ then $A'=1$. Thus, by the assumption, $o(A) = k$ and therefore $a=v$,
        a contradiction. From now on, we assume that $B^{(2)}\ne 0$ and thus $0\ne B^{(2)}=V$ and $1<o(A)<k$. We deal separately with
        two mutually exclusive cases: $A'\ne 1$ and $A'=1$.
        
        Case $A'\ne 1$. As $0<i<k$, it follows from the assumptions that $o(A')$ does not divide $i$. Therefore, $A'^i\ne 1$. Hence,
        $\lambda^i\ne\id$ and thus $\lambda^i(b)\ne b$ for some $b\in B$. Consequently,
        $0\ne\lambda^i(b)-b=\lambda_{ix}(b)-b=\lambda_a(b)-b=a*b\in I\cap V$, a contradiction. Thus, we must have $I\cap V\ne 0$. 
        
        Case $A'=1$. In this situation we know that $v_1\ne 0$ and, from the assumptions, $k=o(A)p$. Since $i=o(A)j$ satisfies $0<i<k$,
        we get $0<j<p$. In particular, $j$ is not divisible by $p$. Because also $o(A)$ is not divisible by $p$, it follows
        that $i$ is not divisible by $p$. Hence, $\lambda^i(x)=iv_1+x\ne x$ and thus $0\ne iv_1=\lambda^i(x)-x=\lambda_a(x)-x=a*x\in I\cap V$,
        a contradiction. Hence, we indeed have shown that $V$ is the smallest non-zero ideal of $B$.

        Note also that $\lambda_v=\lambda^0=\id$ for any $v\in V$, and thus, if $B^{(2)}\ne 0$, then $B^{(3)}=B^{(2)}=V*B=0$ because
        $v*b=\lambda_v(b)-b=0$ for each $b\in B$. Moreover, it also follows that the action of the ideal $V$ on the set $X=\{v+x-v:v\in V\}$
        is transitive. Further, since $1-A$ is invertible, we get that \[X=\{v-Av+x:v\in V\}=(1-A)V+x=V+x\] and, in consequence,  $V=\{y-x:y\in X\}$.
        Hence, in particular, $B$ is additively generated by $X$. Finally, it is clear that that $B/V$ is a trivial skew left brace of cyclic type.
        This finishes the proof of the sufficiency of the listed condition.

        Therefore, it remains to show that the solution $(X,r)$ is of the form as stated in the result. So, let $a=v+x\in X$ and $b=w+x\in X$
        for some $v,w\in V$. Then $\lambda_a(b)=\lambda(w+x)=A'w+v_1+x$. Moreover, since $\lambda_a=\lambda_b=\lambda$ and $\lambda_{-b}=\lambda^{-1}$,
        we get $(-b)\circ\lambda_a(b)=-b+\lambda_{-b}(\lambda_a(b))=-b+\lambda^{-1}(\lambda(b))=0$, which yields
        $\lambda_a(b)^{-1}=-b$. Hence, in consequence,
        \begin{align}\label{rho}
            \begin{aligned}
            \rho_b(a) & =\lambda_a(b)^{-1}\circ a\circ b=(-b)\circ(a+\lambda(b))=-b+\lambda^{-1}(a+\lambda(b))\\
            & =-b+\lambda^{-1}(a)+b=-x-w+A'^{-1}(v-v_1)+x+w+x\\
            & =A^{-1}(-w+A'^{-1}(v-v_1))+w+x
            \end{aligned}
        \end{align}
        (the last equality follows because $-x+u=A^{-1}u-x$ for $u\in V$). Hence
        \[r(v+x,w+x)=(A'w+v_1+x,-A^{-1}w+(AA')^{-1}(v-v_1)+w+x),\] which completes the proof.
    \end{proof}
\end{thm}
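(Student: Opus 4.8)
The plan is to prove the two implications separately, but to spend almost all the effort on sufficiency: the necessity direction is essentially already in place, having been extracted in Remark~\ref{primitive} and Lemmas~\ref{charsimp}--\ref{Bprop} together with the structural discussion preceding the theorem (the orders dividing $\gcd(k,p^n-1)$, the field $\mc{R}$, the three order relations between $k$, $o(A)$, $o(A')$, and $p$). So I would first record that necessity is done, and then reduce the sufficiency claim, via Theorem~\ref{simpleGEN} (equivalently Theorem~\ref{CorSummary}(2)), to checking four things for the candidate data: that $(B,+,\circ)$ really is a skew left brace; that $X$ additively generates $B$; that $V=\langle y-x:y\in X\rangle_+$ is the smallest non-zero ideal of $B$; and that $V$ acts transitively on $X$. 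Once these hold the solution $(X,r)$ is automatically finite, non-degenerate, irretractable, and simple, and it remains only to compute $r$ explicitly.

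First I would verify that $\lambda\colon B\to B$, $\lambda(v+ix)=A'v+u_0-A^iu_0+ix$, lies in $\Aut(B,+)$: this is the direct computation $\lambda(a+b)=\lambda(a)+\lambda(b)$ for $a=v+ix$, $b=w+jx$, obtained by reducing $i+j$ modulo $k$ and using $A^k=1$ and $AA'=A'A$; bijectivity is clear. Then $a\circ b=a+\lambda^{|a|}(b)$ is a skew brace operation: left distributivity $a\circ(b+c)=a\circ b-a+a\circ c$ is immediate from additivity of $\lambda^{|a|}$, and the one remaining axiom reduces to $\lambda_{a+\lambda_a(b)}=\lambda_a\lambda_b$, which in turn (since $|a+\lambda_a(b)|\equiv|a|+|b|\pmod k$) reduces to $\lambda^k=\id$. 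Establishing $\lambda^k=\id$ is the first delicate point: writing $\lambda(v)=A'v$ on $V$, it suffices, given $A'^k=1$, to show $\lambda^k(x)=x$, and I would prove $\lambda^n(x)=w_n+x$ with $w_n=A'^{n-1}v_1+\dotsb+A'v_1+v_1$ by induction, then split: if $A'=1$ then $k=o(A)p$ and $w_k=kv_1=0$ because $pv_1=0$ in the elementary abelian group $V$; if $A'\ne1$ then $A'^k=1$ forces $1+A'+\dotsb+A'^{k-1}=0$ in the field $\mc{R}$, so again $w_k=0$.

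Next I would compute $a*b=\lambda_a(b)-b$ explicitly; it lies in $V$, so $B^{(2)}\subseteq V$, and since $B^{(2)}$ is an $\mc{R}$-submodule of the simple $\mc{R}$-module $V$, either $B^{(2)}=0$ or $B^{(2)}=V$, with $B^{(2)}=0$ equivalent to $A'=1$ and $v_1=0$. The heart of the argument, and the step I expect to be the main obstacle, is showing $V$ is the smallest non-zero ideal: given a non-zero ideal $I$, I must show $I\cap V\ne0$, for then $I\cap V=V$ by simplicity and $V\subseteq I$. Assuming $I\cap V=0$ and picking $0\ne a=v+ix\in I$, conjugation by $x$ puts $(A-1)v$ into $I\cap V$, so $v=0$ since $A-1$ is invertible; normality of $I$ in $(B,+)$ then forces $(A^i-1)V\subseteq I\cap V=0$, i.e.\ $o(A)\mid i$ with $0<i<k$. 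A three-way case split, turning precisely on the hypothesised order relations, finishes it: if $B^{(2)}=0$ then $A'=1$ and $k=o(A)$, contradicting $o(A)\mid i$; if $B^{(2)}\ne0$ and $A'\ne1$ then $k=\lcm(o(A),o(A'))$ forces $o(A')\nmid i$, hence $A'^i\ne1$, $\lambda^i\ne\id$, and $a*b=\lambda^i(b)-b\in I\cap V$ is nonzero for suitable $b$; if $B^{(2)}\ne0$ and $A'=1$ then $v_1\ne0$ and $k=o(A)p$, so $i=o(A)j$ with $0<j<p$ gives $p\nmid i$ and $a*x=iv_1\ne0$ lies in $I\cap V$. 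Keeping these three cases cleanly separated and invoking the right order hypothesis in each is where the care is needed.

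Finally I would clear the remaining structural conditions and extract the formula. We have $B^{(3)}=0$ because $\lambda_v=\lambda^0=\id$ for $v\in V$, whence $v*b=0$; the $V$-action on $X$ is transitive because $\mc{C}=\langle\sigma_v:v\in V\rangle$ already acts transitively on the conjugacy class $X=\{v+x-v:v\in V\}$; since $1-A$ is invertible, $X=\{v-Av+x:v\in V\}=(1-A)V+x=V+x$, so $V=\{y-x:y\in X\}$ and $B=\langle X\rangle_+$; and $B/V$ is visibly trivial of cyclic type. For the formula, take $a=v+x$, $b=w+x$ in $X$: then $\lambda_a(b)=\lambda(w+x)=A'w+v_1+x$, and since $\lambda_a=\lambda_b=\lambda$ one gets $(-b)\circ\lambda_a(b)=0$, i.e.\ $\lambda_a(b)^{-1}=-b$; hence $\rho_b(a)=\lambda_a(b)^{-1}\circ a\circ b=(-b)\circ(a+\lambda(b))=-b+\lambda^{-1}(a)+b$, and unwinding this with the identity $-x+u=A^{-1}u-x$ for $u\in V$ yields $\rho_b(a)=-A^{-1}w+(AA')^{-1}(v-v_1)+w+x$. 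Assembling the two coordinates gives exactly the stated expression for $r(v+x,w+x)$, which completes the proof.
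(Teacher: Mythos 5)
Your proposal is correct and follows essentially the same route as the paper's own proof: necessity delegated to the preceding structural analysis, sufficiency reduced to the conditions of Theorem~\ref{CorSummary}(2), with the same verification that $\lambda\in\Aut(B,+)$, the same reduction of the brace axioms to $\lambda^k=\id$ via $\lambda^n(x)=w_n+x$, the same dichotomy $B^{(2)}\in\{0,V\}$, the identical three-case argument (keyed to the order hypotheses on $k$, $o(A)$, $o(A')$, $p$) showing every non-zero ideal meets $V$, and the same computation of $\rho_b(a)=-b+\lambda^{-1}(a)+b$ yielding the stated formula for $r$. No gaps to report.
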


Although Theorem~\ref{coro1*} gives many examples, we provide a concrete construction that,
under certain extra conditions, yields solutions of this type.

\begin{ex}\label{ex-ab}
    Let $V=\mb{F}_p^n$ for some prime $p$ and some $n\ge 1$. Moreover, let $C=\free{x}$ be a cyclic group of order $k=2n$. Define
    $A=A'\in\Aut(V)$ as \[A(s_1,\dotsc,s_n)=(s_n,-s_1,s_2,\dotsc,s_{n-1})\] (note that $A=A'=-1$ for $n=1$). Clearly, $o(A)=o(A')=k$.
    Moreover, it is easy to verify that $Z(B,+)=0$. Further, assume that $u_0=0$ (so, $\lambda$ is the conjugation by $x$)
    and $k\mid p^n-1$. In order for this data to yield and example we also need that $V$ is a simple $\mc{R}$-module.
    \begin{enumerate}
        \item If $n=1$ then $(B,+)\cong C_p\rtimes C_2\cong D_{2p}$. Moreover, $X$ is the set of elements of order $2$ in $D_{2p}$
        and clearly $V=B^{(2)}$ is the smallest non-zero ideal of $B$.
        \item If $n=2$ then $V=B^{(2)}$ is the smallest non-zero ideal of $B$ if and only if $p\equiv 3\pmod 4$.
        Indeed, if $I$ is a non-zero ideal of $B$ then $[I,B]_+\ne 0$ as $Z(B,+)=0$. Since $[I,B]_+\s I\cap V$, there exists
        $0\ne v=(s_1,s_2)\in I\cap V$. If $s_i=0$ for some $1\le i\le 2$ then it is easy to check that $V\s I$. If $s_1=s_2\ne 0$
        then $(s_1,-s_1)=Av\in I$ and $(2s_1,0)=v+Av\in I$. Since $2s_1\ne 0$ we are done as in the previous case. Whereas, if $s_1\ne s_2$
        and both are non-zero then  $v\in I$ and $Av=(s_2,-s_1)\in I$ are linearly independent over $\mb{F}_p$ (and thus span $V$,
        which yields $V\s I)$ if and only $s_1^2+s_2^2\ne 0$, or equivalently $p\equiv 3\pmod 4$ (because the equation $x^2=-1$
        has a solution in $\mb{F}_p$ if and only if $p\equiv 1\pmod 4$).
        \item On the other hand, if $n=3$ and $v=(1,1,2)\in V$ then $Av=(2,-1,1)$ and $A^2v=(1,-2,-1)$. Hence, $a-Av+A^2v=0$
        and thus $I=\free{v,Av}_+$ is a non-zero ideal of $B$ properly contained in $V=B^{(2)}$. So, for arbitrary $n$
        this construction does not yield examples of the desired type.
    \end{enumerate}
\end{ex}

\begin{ex}
    Let $V$ be the additive group of the finite field $\mb{F}_{p^n}$, for some prime $p$ and some $n\ge 1$ (obviously $V\cong\mb{F}_p^n$),
    and let $C=\free{x}$ be a cyclic group of order $k=p^n-1$. Define $A=A'\in\Aut(V)$ as the multiplication by a generator $\xi$ of the 
    multiplicative group of $\mb{F}_{p^n}$ (i.e., $Av=\xi v$ for $v\in V$) and $u_0=0$. Clearly $o(A)=o(A')=k$. Furthermore, $\mc{R}v=V$
    for any $0\ne v\in V$ and thus $V$ is a simple $\mc{R}$-module. Concluding, all assumptions of Theorem~\ref{coro1*} are satisfied.
\end{ex}

\subsection{The non-abelian case}
Now, let us assume that $V$ is non-abelian. As we already know (see Lemma~\ref{charsimp}), in this case $V=S_1+\dotsb+S_n$,
a direct sum of isomorphic non-abelian simple groups, say isomorphic with $S$.

Further, Remark~\ref{elform} guarantees that $B=V+C$, where $C=\free{x}_+$, a cyclic group of order $m$, for some $m$.
Since $B/V\cong\free{x+V}_+$, we clearly get that $k=|B/V|$ divides $m$. Since $V\ne B$, we get $k>1$, as before.
However, in contrast to the case in which $V$ is abelian, it may happen that $kx\ne 0$ and thus $V\cap C\ne 0$.

Let $D=V\rtimes C$ be the semi-direct product of the additive groups $V$ and $C$, where the action of $C$ on $V$ is determined by
the automorphism $A$ (introduced just before Lemma~\ref{Asim}), i.e., 
$(v,ix)+(w,jx)=(v+A^iw,(i+j)x)$ for $(v,ix)\in D$ and $(w,jx)\in D$.
We claim that $(B,+)\cong D/Z(D)$ Indeed, taking into account
that \[Z(D)=\{(v,ix)\in D:Av=v\text{ and }A^iw=-v+w+v\text{ for each }w\in V\},\] it is easy to verify that the map $\phi\colon D\to B$,
defined as $\phi(v,ix)=v+ix$, is a surjective morphism of groups with $\Ker(\phi)=Z(D)$. Hence, $\phi$ induces the desired isomorphism.

Next, observe that \[A'(kx)=\lambda(kx)=k\lambda(x)=k(u_0+x-u_0)=u_0+kx-u_0=u_0-A^ku_0+kx=v_k+x.\]

Finally, we claim that $V=\free{v-Av:v\in V}_+$, that is $V$ is additively generated by elements of the form $v-Av$ with $v\in V$.
Moreover, as we have $v-Av=y-x$ for $y=v+x-v\in X$, our claim may be equivalently expressed as that $V$ is additively generated
by elements of the form $y-x$ with $y\in X$ or, by elements of the form $y-z=(y-x)-(z-x)$ with $y,z\in X$. To prove the claim put
$I=\free{y-z:y,z\in X}_+$. Since the set $X$ is a conjugacy class that is both $A$-invariant and $A'$-invariant
(and thus $\mc{A}$-invariant), it follows that $I$ is an $\mc{A}$-invariant normal subgroup of $(V,+)$. Since $|X|>1$, we get
$I\ne 0$ and thus $I=V$ by Lemma~\ref{Asim}, as claimed.

Therefore, we have proved the necessity part of the following refinement of Theorem~\ref{CorSummary}(2) in case $B^{(3)}=0$ and $V$ is non-abelian.

\begin{thm}\label{coro2*}
    Assume that:
    \begin{itemize}
        \item $V$ is a finite non-abelian characteristically simple group, i.e., $V=S_1+\dotsb+S_n$ is a direct sum of isomorphic
        finite non-abelian simple groups (say, isomorphic with $S$),
        \item $C$ is a finite cyclic group of order $m$ (for some $m>1$) with a generator denoted by $c$,
        \item $D=V\rtimes C$, where the action of $C$ on $V$ is determined by an automorphism $1\ne A\in\Aut(V)$ (i.e.,
        $(v,ic)+(w,jc)=(v+A^iw,(i+j)c)$ for $(v,ic)\in D$ and $(w,jc)\in D$).
    \end{itemize}    
    Let $(B,+)=D/Z(D)$.  (Since $Z(V)=0$, it follows that $V$ embeds in $B$ as a normal subgroup, so we may write $B=V+\free{x}_+$,
    where $x$ is the image of $c$ in $B$. Further, if $k=|B/V|$ then each element $a\in B$ may be uniquely written as $a=v+ix$ for
    some $v\in V$ and $0\le i<k$; we shall write $|a|=i$.) Assume, furthermore, that $k>1$ and:    
    \begin{itemize}    
        \item $u_0\in V$ (put $v_n=u_0-A^nu_0\in V$ for $n\ge 0$; note that $v_0=0$) and $A'\in\Aut(V)$ is an automorphism of order $o(A')$
        dividing $k$ such that $[A,A']v=-v_1+v+v_1$ for $v\in V$, $A'(kx)=v_k+kx$ and $A'^{k-1}v_1+\dotsb+A'v_1+v_1=0$,
        \item $V$ is additively generated by elements of the form $v-Av$ for $v\in V$, and it has no non-trivial normal $\mc{A}$-invariant
        subgroups, where $\mc{A}$ is the subgroup of $\Aut(V)$ generated by $A$ and $A'$.    
    \end{itemize}
    Let
    $a\circ b=a+\lambda^{|a|}(b)$ for $a,b\in B$, where $\lambda\colon B\to B$ is defined as $\lambda(v+ix)=A'v+v_i+ix=A'v+u_0-A^iu_0+ix$.
    Then $(B,+,\circ)$ is a skew left brace satisfying the conditions stated in Theorem~\ref{CorSummary}(2) and the solution $(B,r_B)$
    determined by $B$ restricts to a (finite non-degenerate irretractable) simple solution $(X,r)$ on the set $X=\{v+x-v:v\in V\}$,
    the additive conjugacy class of $x$. Further, $r$ is given as \[r(v+x-v,w+x-w)=(w'+x-w',v'+x-v'),\] where
    \[w'=A'w+u_0\quad\text{and}\quad v'=w-A^{-1}w+(A'A)^{-1}(v-u_0).\]
    (Note that the elements $v',w'\in V$ are not uniquely determined by the elements $v+x-v\in X$ and $w+x-w\in X$.
    But if $v+x-v=\tilde{v}+x-\tilde{v}$ and $w+x-w=\tilde{w}+x-\tilde{w}$ for some $\tilde{v},\tilde{w}\in V$ then $v'+x-v'=\tilde{v}'+x-\tilde{v}'$
    and $w'+x-w'=\tilde{w}'+x-\tilde{w}'$, so $r$ is indeed well-defined.)

    Conversely, finite skew left braces $B$ that define (finite non-degenerate irretractable) simple solutions $(X,r)$
    of type $(2)$ in Theorem~\ref{CorSummary}, with $B^{(3)}=0$ and $V$ non-abelian, satisfy all the conditions stated above.
    \begin{proof}
        We only need to prove that the listed conditions are sufficient for a finite non-degenerate simple solution $(X,r)$.
        To do this it is enough to check that the conditions stated in Theorem~\ref{CorSummary}(2) hold.
        
        First, we check that $\lambda$ is an automorphism of $(B,+)$. Let $a=v+ix\in B$ and $b=w+jx\in B$, where $v,w\in V$ and $0\le i,j<k$.
        Write $i+j=qk+l$ for some $q\ge 0$ and $0\le l<k$ (note that $i,j<k$ implies that actually $q=0$ or $q=1$).
        To prove that $\lambda(a+b)=\lambda(a)+\lambda(b)$ we shall use the following formula
        \begin{equation}\label{mor1}
            v_n+A^nA'w=A'A^nw+v_n,
        \end{equation}
        which we claim to hold for any $w\in V$ and any $n\ge 0$. Clearly \eqref{mor1} is true for $n=0$. Moreover, \eqref{mor1} for $n=1$
        is just our assumption that $[A,A']w=-v_1+w+v_1$. Now, suppose that \eqref{mor1} holds for some $n\ge 0$. Then, using
        \begin{equation}\label{vprop}
            v_{s+t}=v_s+A^sv_t,
        \end{equation}
        which holds for any $s,t\ge 0$, we get, due to the inductive assumption (applied to $Aw$ in place of $w$), that
        \begin{align*}
            v_{n+1}+A^{n+1}A'w & =v_n+A^nv_1+A^{n+1}A'w
            =v_n+A^n(v_1+AA'w)
             =v_n+A^n(A'Aw+v_1)\\
            & =v_n+A^nA'(Aw)+A^nv_1
             =A'A^n(Aw)+v_n+A^nv_1=A'A^{n+1}w+v_{n+1}.
        \end{align*}
        Now, taking into account \eqref{mor1}, \eqref{vprop} and
        \begin{align*}
            A'(qkx) & =qA'(kx)=q(v_k+kx)=q(u_0-A^ku_0+kx)\\
            & =q(u_0+kx-u_0)=u_0+qkx-u_0=u_0-A^{qk}u_0+qkx=v_{qk}+qkx,
        \end{align*}
        we obtain
        \begin{align*}
            \lambda(a+b) & =\lambda(v+A^iw+qkx+lx)=A'v+A'A^iw+A'(qkx)+v_l+lx\\
             & =A'v+A'A^iw+v_{qk}+qkx+v_l+lx=A'v+A'A^iw+v_{qk}+A^{qk}v_l+(qk+l)x\\
             & =A'v+A'A^iw+v_{i+j}+(i+j)x=A'v+A'A^i w+v_i+A^iv_j+(i+j)x\\
             & =A'v+v_i+A^iA'w+A^iv_j+ix+jx=A'v+v_i+ix+A'w+v_j+jx=\lambda(a)+\lambda(b).
        \end{align*}
        Therefore, $\lambda\in\Aut(B,+)$, since $\lambda$ is obviously bijective.

        Further, that $(B,+,\circ)$ is a skew left brace can be confirmed by exactly the same calculation as in \eqref{brace}
        and \eqref{brace2} in the proof of Theorem~\ref{coro1*} (here, to obtain that $\lambda^k=\id$, we use the assumptions
        that $A'^k=1$ and $A'^{k-1}v_1+\dotsb+A'v_1+v_1=0$).

        Next, as in the proof of Theorem~\ref{coro1*} (see the paragraph containing \eqref{a*b}), we show that $B^{(2)}\s V$.
        Since $B^{(2)}$ is a non-zero normal $\mc{A}$-invariant subgroup of $V$, we get $B^{(2)}=V$ by our assumptions.

        Now, we claim that $V$ is the smallest non-zero ideal of $B$. So, assume $I$ is a non-zero ideal of $B$.
        We need to show that $V\s I$. To prove the claim we  consider two mutually exclusive  cases.
        
        Case I. Assume first that $I\cap Z(B,+)=0$. Choose $0\ne a=v+ix\in I$. Since $a\notin Z(B,+)$, by Lemma~\ref{Bprop}(1) we have
        that either $Av\ne v$ or $A^iw\ne-v+w+v$ for some $w\in V$. In the former situation $0\ne Av-v=(Av+ix)-(v+ix)=(x+a-x)-a\in I\cap V$.
        Whereas, in the latter situation, $0\ne w+v-A^iw-v=(w+v-A^iw+ix)-(v+ix)=(w+a-w)-a\in I\cap V$. Hence, in both situations,
        $I\cap V\ne 0$ and thus $I\cap V=V$. This clearly yields $V\s I$, as desired.
        
        Case II. Now, assume that $I\cap Z(B,+)\ne 0$. Choose $0\ne a=v+ix\in I\cap Z(B,+)$. Again, because of Lemma~\ref{Bprop}(3), we get that
        $\Soc(B)=0$. Therefore, $a\notin\Soc(B)$ and consequently $\lambda_a\ne\id$. Hence, $\lambda_a(b)\ne b$ for some $b\in B$ and thus
        $0\ne\lambda^i(b)-b=\lambda_a(b)-b=a*b\in I\cap V$. Hence, again $I$ contains $V$, as desired.
        
        Hence, we indeed have shown that $V$ is the smallest non-zero ideal of $B$.
        
        Note also that $\lambda_v=\lambda^0=\id$ for any $v\in V$, and thus $B^{(3)}=B^{(2)}*B=V*B=0$ because $v*b=\lambda_v(b)-b=0$ for
        each $b\in B$. Moreover, it also follows that the action of the ideal $V$ on the set $X=\{v+x-v:v\in V\}$ is transitive. Further, as
        $V$ is, by our assumption, additively generated by elements of the form $v-Av=(v+x-v)-x$ with $v\in V$, we conclude that $X$ generates $B$
        additively. Finally, it is clear that that $B/V$ is a trivial skew left brace of cyclic type. This finishes the proof of the sufficiency
        of the listed conditions.
        
        Therefore, it remains to show that the solution $(X,r)$ is of the form as stated in the result. 
        So, let $a=v+x-v\in X$ and $b=w+x-w\in X$ with $v,w\in V$. Then, as $\lambda_a=\lambda$, we get
        \begin{align*}
            \lambda_a(b) & =\lambda(w+x-w)=\lambda(w)+\lambda(x)-\lambda(w)=A'w+u_0+x-u_0-A'w\\
            &=(A'w+u_0)+x-(A'w+u_0)=w'+x-w'.
        \end{align*}
        Next, analogously as in \eqref{rho}, we show that $\rho_b(a)=-b+\lambda^{-1}(a)+b$, and thus, using the equality
        $\lambda^{-1}(x)=-A'^{-1}u_0+x+A'^{-1}u_0$, we obtain
        \begin{align*}
            \rho_b(a) & =-b+\lambda^{-1}(a)+b\\
            & =w-x-w+A'^{-1}v-A'^{-1}u_0+x+A'^{-1}u_0-A'^{-1}v+w+x-w\\
            & =w-A^{-1}w-x+A'^{-1}v-A'^{-1}u_0+x+A'^{-1}u_0-A'^{-1}v+x+A^{-1}w-w\\
            & =w-A^{-1}w+(A'A)^{-1}v-x-A'^{-1}u_0+x+A'^{-1}u_0+x-(A'A)^{-1}v+A^{-1}w-w\\
            & =w-A^{-1}w+(A'A)^{-1}(v-u_0)+x+(A'A)^{-1}(u_0-v)+A^{-1}w-w\\
            & =(w-A^{-1}w+(A'A)^{-1}(v-u_0))+x-(w-A^{-1}w+(A'A)^{-1}(v-u_0))=v'+x-v',
        \end{align*}
        which completes the proof.
    \end{proof}
\end{thm}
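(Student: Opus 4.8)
Since the necessity of the listed conditions was already obtained in the discussion preceding the statement, the plan is to prove sufficiency: that the prescribed data yields a skew left brace satisfying Theorem~\ref{CorSummary}(2) whose associated solution restricts to the stated simple solution $(X,r)$ on $X=\{v+x-v:v\in V\}$. The first and most technical step is to check that $\lambda\colon v+ix\mapsto A'v+u_0-A^iu_0+ix$ is a well-defined additive automorphism of $(B,+)=D/Z(D)$. Bijectivity is immediate; additivity is delicate only because of the ``carry'' occurring when the $C$-coordinates $i,j$ satisfy $i+j\ge k$. The key lemma to isolate is the commutation identity $v_n+A^nA'w=A'A^nw+v_n$ for all $n\ge 0$ and $w\in V$, proved by induction on $n$: the case $n=1$ is exactly the hypothesis $[A,A']v=-v_1+v+v_1$, and the inductive step combines $v_{s+t}=v_s+A^sv_t$ with $A'(qkx)=v_{qk}+qkx$ (itself deduced from $A'(kx)=v_k+kx$). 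Feeding these identities into the expansions of $\lambda(a+b)$ and of $\lambda(a)+\lambda(b)$ collapses both to the same expression.

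Once $\lambda\in\Aut(B,+)$ is in hand, the verification that $(B,+,\circ)$ with $a\circ b=a+\lambda^{|a|}(b)$ is a skew left brace is formally identical to the computation in the proof of Theorem~\ref{coro1*}: the brace law $a\circ(b+c)=a\circ b-a+a\circ c$ is immediate, and $\lambda_{a+\lambda_a(b)}=\lambda_a\lambda_b$ follows since $|a+\lambda_a(b)|\equiv|a|+|b|\pmod k$ together with $\lambda^k=\id$; and $\lambda^k=\id$ holds because $A'^k=1$ (as $o(A')\mid k$) and the telescoping hypothesis $A'^{k-1}v_1+\dotsb+A'v_1+v_1=0$ force $\lambda^k(x)=x$.

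Next I would pin down the ideal structure. Exactly as in Theorem~\ref{coro1*}, one computes $a*b=\lambda_a(b)-b\in V$ for all $a,b\in B$, whence $B^{(2)}\s V$; since $B^{(2)}$ is a non-zero normal $\mc{A}$-invariant subgroup of $V$ and, by hypothesis, $V$ has no proper such subgroup, $B^{(2)}=V$. To show $V$ is the smallest non-zero ideal, take any non-zero ideal $I$ and distinguish whether $I\cap Z(B,+)=0$. If so, choose $0\ne a=v+ix\in I$; as $a\notin Z(B,+)$, Lemma~\ref{Bprop}(1) forces $x+a-x\ne a$ or $w+a-w\ne a$ for some $w\in V$, and the corresponding difference lies in $I\cap V$. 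If instead $0\ne a\in I\cap Z(B,+)$, then $a\notin\Soc(B)=0$ by Lemma~\ref{Bprop}(3), so $\lambda_a\ne\id$ and $a*b=\lambda_a(b)-b\in I\cap V$ is non-zero for suitable $b$. In either case $I\cap V$ is a non-zero normal $\mc{A}$-invariant subgroup of $V$, hence equals $V$, so $V\s I$. Since $\lambda_v=\id$ for $v\in V$ we get $B^{(3)}=V*B=0$; $V$ acts transitively on $X$ by construction; $X$ generates $(B,+)$ because $V=\free{v-Av:v\in V}_+=\free{(v+x-v)-x:v\in V}_+$ by hypothesis; and $B/V$ is a trivial skew left brace of cyclic type. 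Thus all conditions of Theorem~\ref{CorSummary}(2) hold and $(X,r)$ is simple.

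Finally one reads off $r$. With $a=v+x-v$, $b=w+x-w\in X$ and $\lambda_a=\lambda$, one gets $\lambda_a(b)=\lambda(w)+\lambda(x)-\lambda(w)=(A'w+u_0)+x-(A'w+u_0)$, i.e.\ $w'=A'w+u_0$; arguing as in the proof of Theorem~\ref{coro1*} that $\rho_b(a)=-b+\lambda^{-1}(a)+b$, and using $\lambda^{-1}(x)=-A'^{-1}u_0+x+A'^{-1}u_0$ together with $-x+u=A^{-1}u-x$ for $u\in V$, a direct manipulation rewrites $\rho_b(a)$ as $v'+x-v'$ with $v'=w-A^{-1}w+(A'A)^{-1}(v-u_0)$; well-definedness follows since replacing $v$ by $\tilde v$ with $v+x-v=\tilde v+x-\tilde v$ alters $v$ (and likewise $w$) only by an element of $V$ centralising $x$ in $(B,+)$, which one checks leaves $v'+x-v'$ and $w'+x-w'$ unchanged. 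The main obstacle is the commutation identity of the first paragraph and its assembly with the auxiliary identities to yield additivity of $\lambda$; that is the one genuinely technical induction, the remainder being bookkeeping parallel to the abelian case of Theorem~\ref{coro1*}.
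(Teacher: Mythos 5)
Your proposal is correct and follows essentially the same route as the paper's proof: the inductive commutation identity $v_n+A^nA'w=A'A^nw+v_n$ combined with $v_{s+t}=v_s+A^sv_t$ and $A'(qkx)=v_{qk}+qkx$ to get additivity of $\lambda$, the brace and $B^{(2)}=V$ verifications carried over from Theorem~\ref{coro1*}, the two-case minimality argument for $V$ via Lemma~\ref{Bprop} (splitting on $I\cap Z(B,+)$), and the same computation of $\lambda_a(b)$ and $\rho_b(a)=-b+\lambda^{-1}(a)+b$ yielding $w'$ and $v'$. No gaps; the only difference is that you spell out slightly more explicitly why $I\cap V\ne 0$ forces $I\cap V=V$ and why the formula for $r$ is independent of the chosen representatives.
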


\begin{rem}
    Observe that $k$ must divide $m$, the order of $C=\free{c}$. Indeed, suppose that $k$ does not divide $m$. Then $m=qk+l$ for some $q\ge 0$
    and $0<l<k$. Since $mx=0$ (as $mc=0$) and $kx\in V$, we get $lx=-qkx\in V$, and thus $k=|B/V|\le l$, a contradiction.
\end{rem}

We know that  if $B^{(2)}=0$ then $V=[B,B]$, and $V=B^{(2)}$ otherwise. Thus, in both cases,
$V$ is uniquely determined by the isomorphism class of the skew left brace $B$. Hence, the number $k=|B/V|$ is uniquely
determined and, by the Jordan--H\"older theorem, also the number $n$ of simple factors of $V$ is uniquely determined.

Skew left braces $B$ as in Theorem~\ref{coro1*} and Theorem~\ref{coro2*} are determined by a simple group $S$,
a positive integer $n$ (this data determines the additive group $V$, and $V$ is abelian if and only if the simple group
$S$ is cyclic), an integer $k>1$, two elements $A$ (determining conjugation by $x$ on $V$) and $A'$ (determining restriction
of $\lambda$ to $V$) in $\Aut(V)$, an element $x\in B$ such that $x+V\in B/V$ is an element of order $k$, and an element
$u_0\in V$ (also needed to determine $\lambda$).

\begin{prop}
    Two skew left braces $B$ and $\tilde{B}$ as in Theorem~\ref{coro1*} or in Theorem~\ref{coro2*} are isomorphic if and only if:
    \begin{itemize}
        \item the corresponding simple groups $S$ and $\tilde{S}$ are isomorphic,
        \item the corresponding integers $n$ and $\tilde{n}$ are equal (note that this condition together with the previous one imply that
        the corresponding groups $V$ and $\tilde{V}$ are isomorphic),
        \item the corresponding integers $k$ and $\tilde{k}$ are equal (in particular $B=V+\free{x}_+$ and $\tilde{B}=\tilde{V}+\free{\tilde{x}}_+$
        for some $x\in B$ and $\tilde{x}\in\tilde{B}$ such that $x+V\in V/B$ and $\tilde{x}+\tilde{V}\in\tilde{B}/\tilde{V}$ are elements of order $k$),
        \item there exists an isomorphism $f\colon V\to\tilde{V}$ of groups such that:
        \begin{itemize}
            \item $f(kx)=k\tilde{x}$,
            \item $f(u_0)+\tilde{x}-f(u_0)=\tilde{u}_0+\tilde{x}-\tilde{u}_0$ (i.e., $-\tilde{u}_0+f(u_0)$ centralizes $\tilde{x}$),
            \item for the corresponding automorphisms $A,A'\in\Aut(V)$ and $\tilde{A},\tilde{A}'\in\Aut(\tilde{V})$
            (conjugations by $x$ and $\tilde{x}$, and restrictions of $\lambda$ and $\tilde{\lambda}$)
            we have $\tilde{A}=fAf^{-1}$ and $\tilde{A}'=fA'f^{-1}$.
        \end{itemize}
    \end{itemize}
    \begin{proof}
        Suppose $F\colon B\to\tilde{B}$ is an isomorphism of skew left braces satisfying conditions as in Theorem~\ref{CorSummary}(2).
        By the previous paragraph we get $F(V)=\tilde{V}$. Let $f\colon V\to\tilde{V}$ denote the isomorphism induced by $F$ (its restriction).
        Take $x\in X$ and put $\tilde{x}=F(x)\in\tilde{X}$. Then $f(Av)=F(x+v-x)=\tilde{x}+f(v)-\tilde{x}=\tilde{A}f(v)$ for each $v\in V$.
        Hence, $\tilde{A}=fAf^{-1}$. Further, since $F\lambda=\tilde{\lambda}F$, we get $f(A'v)=F(\lambda(v))=\tilde{\lambda}(F(v))=\tilde{A}'f(v)$
        for each $v\in V$. Therefore, $\tilde{A}'=fA'f^{-1}$. Finally,
        \[f(u_0)+\tilde{x}-f(u_0)=F(u_0+x-u_0)=F(\lambda(x))=\tilde{\lambda}(F(x))=\tilde{\lambda}(\tilde{x})=\tilde{u}_0+\tilde{x}-\tilde{u}_0.\]
        
        Conversely, if the above conditions hold then the map $F\colon B\to\tilde{B}$, defined as $F(v+ix)=f(v)+i\tilde{x}$ for $v\in V$ and
        $0\le i<k$, is an isomorphism of skew left braces. Indeed, first note that $F$ is well-defined because each element of $B$ has
        a unique presentation of the form $v+ix$ with $v\in V$ and $0\le i<k$. Next, if $a=v+ix\in B$ and $b=w+jx\in B$ for some
        $v,w\in V$ and $0\le i,j<k$ then write $i+j=qk+l$ for some $q\ge 0$ and $0\le l<k$. Then
        \begin{align*}
            F(a+b) & =F(v+A^iw+qkx+lx)=f(v)+f(A^iw)+qf(kx)+l\tilde{x}\\
            & =f(v)+\tilde{A}^if(w)+qk\tilde{x}+l\tilde{x}=f(v)+i\tilde{x}+f(w)+j\tilde{x}=F(a)+F(b).
        \end{align*}
        Further, using the additivity of $F$, we get
        \begin{align*}
            F(\lambda(a)) & =F(A'v+i(u_0+x-u_0))=f(A'v)+i(f(u_0)+\tilde{x}-f(u_0))\\
            & =\tilde{A}'f(v)+i(\tilde{u}_0+\tilde{x}-\tilde{u}_0)=\tilde{\lambda}(f(v)+i\tilde{x})=\tilde{\lambda}(F(a)).
        \end{align*}
        Therefore, $F\lambda=\tilde{\lambda}F$ and thus
        \begin{align*}
            F(a\circ b) & =F(a+\lambda^{|a|}(b))=F(a)+F(\lambda^{|a|}(b))\\
            & =F(a)+\tilde{\lambda}^{|a|}(F(b))=F(a)+\tilde{\lambda}^{|F(a)|}(F(b))=F(a)\circ F(b)
        \end{align*}
        because $|F(a)|=|a|$. Since the bijectivity of $F$ is clear, $F$ is indeed an isomorphism of skew left braces.
    \end{proof}
\end{prop}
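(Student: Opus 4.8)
The plan is to prove both implications by tracking how the canonical data attached to a skew left brace of the type described in Theorems~\ref{coro1*} and~\ref{coro2*} behaves under isomorphism. The crucial preliminary observation, already recorded in the paragraph preceding the statement, is that the ideal $V$ is \emph{canonically} attached to $B$: one has $V=[B,B]$ when $B^{(2)}=0$ and $V=B^{(2)}$ otherwise. Hence any isomorphism of skew left braces $F\colon B\to\tilde{B}$ satisfies $F(V)=\tilde{V}$, so the restriction $f$ of $F$ to $V$ is a group isomorphism $V\to\tilde{V}$. Since for a finite characteristically simple (or elementary abelian) group the isomorphism type of the simple direct factors and their number are invariants, by the Jordan--H\"older theorem this forces $S\cong\tilde{S}$ and $n=\tilde{n}$; and $F$ induces an isomorphism $B/V\cong\tilde{B}/\tilde{V}$, whence $k=|B/V|=|\tilde{B}/\tilde{V}|=\tilde{k}$. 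Choosing $x\in X$ and taking $\tilde{x}:=F(x)\in\tilde{X}$, the class $\tilde{x}+\tilde{V}$ generates the cyclic group $\tilde{B}/\tilde{V}$ and so has order $k$, as required.

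Next I would read off the three compatibility conditions on $f$ from the fact that $F$ preserves $+$, $\circ$ (hence the $\lambda$-map) and restricts to $f$. Applying $F$ to $kx\in V$ gives $f(kx)=F(kx)=kF(x)=k\tilde{x}$. Applying $F$ to the identity $x+v-x=Av$, valid for $v\in V$, gives $\tilde{x}+f(v)-\tilde{x}=f(Av)$, i.e.\ $\tilde{A}=fAf^{-1}$; applying $F$ to $\lambda(v)=A'v$ and using $F\lambda=\tilde{\lambda}F$ gives $\tilde{A}'=fA'f^{-1}$. Finally, applying $F$ to $\lambda(x)=u_0+x-u_0$ and using $\tilde{\lambda}(\tilde{x})=\tilde{u}_0+\tilde{x}-\tilde{u}_0$ yields $f(u_0)+\tilde{x}-f(u_0)=\tilde{u}_0+\tilde{x}-\tilde{u}_0$. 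This settles necessity.

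For sufficiency I would reconstruct $F$ from the normal form. Every element of $B$ is uniquely $v+ix$ with $v\in V$ and $0\le i<k$, and likewise in $\tilde{B}$, so $F(v+ix):=f(v)+i\tilde{x}$ is a well-defined bijection with $|F(a)|=|a|$. Additivity is checked by writing $i+j=qk+l$ with $q\in\{0,1\}$ and $0\le l<k$: then $a+b=(v+A^iw+qkx)+lx$ in normal form, and using $f(kx)=k\tilde{x}$ together with $fAf^{-1}=\tilde{A}$ one gets $F(a+b)=f(v)+\tilde{A}^if(w)+qk\tilde{x}+l\tilde{x}=F(a)+F(b)$. The relation $F\lambda=\tilde{\lambda}F$ follows by expanding $\lambda(v+ix)=A'v+i(u_0+x-u_0)$, applying the additive map $F$, and invoking $fA'f^{-1}=\tilde{A}'$ and $f(u_0)+\tilde{x}-f(u_0)=\tilde{u}_0+\tilde{x}-\tilde{u}_0$. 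Then $F(a\circ b)=F(a+\lambda^{|a|}(b))=F(a)+\tilde{\lambda}^{|a|}(F(b))=F(a)\circ F(b)$ because $|F(a)|=|a|$, so $F$ is an isomorphism of skew left braces.

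The only mildly delicate point is the additivity computation in the sufficiency direction: one must see that $f(kx)=k\tilde{x}$ is precisely the hypothesis needed to transport the ``carry'' relation $(i+j)x=qkx+lx$ through $F$, since $qkx$ lies in $V$ and must be moved by $f$ rather than by the $\tilde{x}$-part. One should also keep in mind that in the non-abelian case $V$ genuinely embeds in $B$ as a normal subgroup, because $Z(V)=0$, which is what makes $kx$, $u_0$ and conjugation by $x$ meaningful elements and operations inside $B$; with that in place everything else is a direct unravelling of the definitions in Theorems~\ref{coro1*} and~\ref{coro2*}.
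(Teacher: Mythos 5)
Your proposal is correct and follows essentially the same route as the paper's own proof: necessity by restricting an isomorphism $F$ to $V$ (using that $V$, and hence $k$, $n$, $S$, are canonical invariants, as noted in the paragraph preceding the proposition) and reading off the compatibility conditions from $F(x+v-x)=\tilde{x}+f(v)-\tilde{x}$, $F\lambda=\tilde{\lambda}F$ and $F(\lambda(x))$; sufficiency by defining $F(v+ix)=f(v)+i\tilde{x}$ on normal forms and verifying additivity via the carry $i+j=qk+l$ together with $f(kx)=k\tilde{x}$, then $F\lambda=\tilde{\lambda}F$ and multiplicativity from $|F(a)|=|a|$. Your explicit remarks on the carry step and on $Z(V)=0$ in the non-abelian case are consistent with, and slightly more detailed than, the paper's argument.
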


\begin{rem}
    The automorphism $A$, that determines the conjugation by $x$ on $V=S_1+\dotsb+S_n$, is determined by a matrix in $\GL_n(\mb{F}_p)$
    is case $V$ is abelian. In case $V$ is non-abelian, $A$ can be written in a particularly simple form
    provided $A$ acts transitively on the set of simple components $\{S_1,\dotsc,S_n\}$. In this case, without loss of
    generality, we may assume that $A(S_i)=S_{i+1}$ for $1\le i<n$ and $A(S_n)=S_1$. Let $S=S_1$. Since $A\in\Aut(V)$
    and $S_1,\dotsc,S_n$ commute elementwise (i.e., $s_i+s_j=s_j+s_i$ for $s_i\in S_i$ and $s_j\in S_j$ with $i\ne j$),
    we get the group isomorphism $f\colon S^n\to V$ defined as \[f(s_1,\dotsc,s_n)=s_1+As_2+\dotsb+A^{n-1}s_n.\]
    If $T=\{ix:0\le i<k\}$ then this isomorphism can be naturally extended to a bijection $F\colon S^n\times T\to B$
    by defining $F(s,ix)=f(s)+ix$. Hence, we may assume that $V=S^n$ and then $A$ can be simply written as
    \[A(s_1,\dotsc,s_n)=(\theta(s_n),s_1,\dotsc,s_{n-1})\] for some $\theta\in\Aut(S)$ (actually, $\theta$ is just the 
    restriction of $A^n$ to $S$). In other words, the automorphism $A$ is determined by one automorphism of the simple
    group $S$. Therefore, we recover the result of Guralnick recalled in \cite[Theorem 3.7]{MR1994219} and the description
    of simple rack solutions (i.e., solutions with all $\lambda_x=\id$) obtained by Andruskiewitsch and Gra\~na given in that paper.

    In the general case (for $V$ non-abelian), we proceed as follows. Let $S_{ij}=A^{j-1}(S_i)$ for $1\le i\le n$ and $j\ge 1$. Clearly,
    each $S_{ij}$ is also a minimal normal subgroup of $V$. Hence, $S_{ij}\cap U=0$ or $S_{ij}\s U$ for any normal subgroup $U$ of $V$.
    Since $V$ is finite, there exists $k_1$ such that $S_{1,k_1+1}\s S_{11}+\dotsb+S_{1k_1}$. Take a minimal such $k_1$. Then
    $T_1=S_{11}+\dotsb+S_{1k_1}$ is a direct sum and $A(S_{1j})=S_{1,j+1}$ for $1\le j<k_1$. We claim that $A(S_{1k_1})=S_{11}$.
    To prove this it is enough to show that $A(S_{1k_1})\cap S_{11}\ne 0$ because then, by the minimality of $A(S_{1k_1})$ and $S_{11}$,
    we get $A(S_{1k_1})=A(S_{1k_1})\cap S_{11}=S_{11}$, as desired. So, suppose $A(S_{1k_1})\cap S_{11}=0$. Then $[A(S_{1k_1}),S_{11}]=0$.
    Moreover, if $1\le j<k_1$ then $[S_{1j},S_{1k_1}]=0$ implies \[[A(S_{1k_1}),S_{1,j+1}]=[A(S_{1k_1}),A(S_{1j})]=A([S_{1k_1},S_{1j}])=0.\]
    Thus $[A(S_{1k_1}),T_1]=0$ and since $A(S_{1k_1})\s T_1$ we conclude that $A(S_{1k_1})\s Z(T_1)=0$, a contradiction.
    If $T_1\ne V$ then $S_i\nsubseteq T_1$ and thus $S_i\cap T_1=0$ for some $2\le i\le n$. Without loss of generality we may assume that $i=2$.
    Let $k_2$ be minimal such that $S_{2,k_2+1}\s T_1+S_{21}+\dotsb+S_{2k_2}$. Again, the sum $T_2=T_1+S_{21}+\dotsb+S_{2k_2}$ is direct and 
    $A(S_{2j})=S_{2,j+1}$ for $1\le j<k_2$. As before, we show that $[A(S_{2k_2}),S_{1j}]=0$ for $1<j\le k_1$ and $[A(S_{2k_2}),S_{2j}]=0$
    for $1<j\le k_2$. Furthermore, since $A(S_{2k_2})\ne A(S_{1k_1})=S_{11}$, we get $A(S_{2k_2})\cap S_{11}=0$ and thus $[A(S_{2k_2}),S_{11}]=0$.
    Therefore, it follows that $A(S_{2k_2})=S_{21}$ because otherwise $A(S_{2k_2})\cap S_{21}=0$, which yields $[A(S_{2k_2}),S_{21}]=0$.
    Hence, in consequence, $A(S_{2k_2})\s Z(T_2)=0$, a contradiction. Continuing in this manner, reindexing the simple components $S_1,\dotsc,S_n$
    if needed and taking into account that $V$ is finite, we conclude that $V$ can be written as a direct sum $V=T_s=\sum_{i=1}^s\sum_{j=1}^{k_i}S_{ij}$
    such that $A(S_{ij})=S_{i,j+1}$ provided $1\le j<k_i$ and $A(S_{ik_i})=S_{i1}$. So, by the first paragraph, the automorphism $A$ is then
    completely determined by $s$ automorphisms $\theta_i\in\Aut(S)$, one for each $A$-orbit $\{S_{i1},\dotsc,S_{ik_i}\}$.
\end{rem}

\begin{rem}\label{T}
    Assume that $B$ is a skew left brace as constructed in Theorem~\ref{coro2*}.
    Note that in case $V\cap\free{x}_+=0$ (i.e., when $(B,+)=V\rtimes\free{x}_+$ is a semi-direct product)
    the condition $B=\free{X}_+$ implies $V=\free{T}_+$, where $T=\{v-Av:v\in V\}$. Indeed, let $U=\free{T}_+\s V$.
    As $X=\{v+x-v:v\in V\}=\{v-Av+x:v\in V\}=T+x=x+T$ (the last equality follows because $T$ is $A$-invariant),
    we get $B=\free{X}_+=\free{T+x}_+=U+\free{x}_+$. Hence, if $v\in V$ then $v=u+ix$ for some $u\in U$ and some $0\le i<k$,
    and thus $-u+v=ix\in V\cap\free{x}_+=0$. Therefore, $v=u\in U$, and thus $U=V$, as desired.
\end{rem}

\section{Examples }

In the first four examples of this section we include some concrete classes of skew braces that satisfy all the requirements
of the main theorems. In the fifth example we show that our results can be applied to simple skew left braces to generate further
families of simple solutions. And we finish with some remarks on simple solutions of prime cardinality.
    
\begin{ex}\label{ex1*}
    Assume that $S$ is a finite non-abelian simple group and fix an element $0\ne a\in S$ of order $n>1$.
    Put $V=S^n$ and $C=\free{x}$, a cyclic group of order $n$. Next, define $A,A'\in\Aut(V)$ by the following formulas
    \[A(s_1,\dotsc,s_n)=(s_n,s_1,\dotsc,s_{n-1})\quad\text{and}\quad A'(s_1,\dotsc,s_n)=(s_1,\gamma_a(s_2),\dotsc,\gamma_{(n-1)a}(s_n)),\]
    where $\gamma_a\in\Aut(S)$ denotes the conjugation by the element $a$, i.e., $\gamma_a(s)=a+s-a$. Clearly, $o(A)=n$ and also $o(A')=n$
    because $o(\gamma_a)=n$. Moreover, if $D=V\rtimes C$, where the action of $C$ on $V$ is determined by the automorphism $A$
    (as in Theorem~\ref{coro2*}), then taking into account that $Z(S)=0$ one easily verifies that $Z(D)=0$ and thus $(B,+)=D/Z(D)\cong D$.
    In particular, $B/V\cong C$ and $k=|B/V|=n$.
    
    Further, put $u_0=(0,a,2a,\dotsc,(n-1)a)\in V$ and $v_1=u_0-Au_0\in V$. Then $v_1=(a,\dotsc,a)\ne 0$ and $Av_1=A'v_1=v_1$
    (so $A$ has non-zero fixed points). If $v=(s_1,\dotsc,s_n)\in V$ then
    \begin{align*}
        [A,A']v & =AA'A^{-1}(s_1,\gamma_{-a}(s_2),\dotsc,\gamma_{-(n-1)a}(s_n))=AA'(\gamma_{-a}(s_2),\dotsc,\gamma_{-(n-1)a}(s_n),s_1)\\
        & =A(\gamma_{-a}(s_2),\dotsc,\gamma_{-a}(s_n),\gamma_{(n-1)a}(s_1))=(\gamma_{-a}(s_1),\gamma_{-a}(s_2),\dotsc,\gamma_{-a}(s_n))=-v_1+v+v_1
    \end{align*}
    because $\gamma_{(n-1)a}=\gamma_{-a}$ (as we have $na=0$). Moreover, since $kx=nx=0$ and $v_k=v_n=u_0-A^nu_0=0$, we also get $A'(kx)=v_k+kx$.
    
    Now, we claim that $B=\free{X}_+$, where $X=\{v+x-v:v\in V\}$. To prove our claim put $N=\free{X}_+$. Because $B=V+\free{x}_+$ and $x\in N$,
    it suffices to show that $V\s N$. So, let $0\ne s_1\in S$ and define $w=(s_1,0,\dotsc,0)\in V$. Since $N$ is a normal subgroup of $(B,+)$,
    we get $v_1=(s_1,-s_1,0,\dotsc,0)=w-Aw=(w+x-w)-x\in N$. As $Z(S)=0$, it follows that $t=[s_1,s_2]_+\ne 0$ for some $s_2\in S$. Therefore, if
    $v_2=(s_2,0,\dotsc,0)\in V$ then $v=(t,0,\dotsc,0)=([s_1,s_2]_+,0,\dotsc,0)=[v_1,v_2]_+=v_1+(v_2-v_1-v_2)\in N$.
    Further, if $s\in S$ then defining $u=(s,0,\dotsc,0)\in V$, we get $(s+t-s,0,\dotsc,0)=u+v-u\in N$.
    Since $\free{s+t-s:s\in S}_+=S$, we obtain $S\times\{0\}^{n-1}\s N$. Next, applying an analogous argument to elements
    $Av,\dotsc,A^{n-1}v\in N$ (each having just one non-zero component), we get $\{0\}^i\times S\times\{0\}^{n-i-1}\s N$ for each $1\le i<n$.
    Hence, $V=S^n\s N$, as claimed. Therefore, taking into account that $V\cap\free{x}_+=0$, we conclude that $V=\free{v-Av:v\in V}_+$ by Remark~\ref{T}.

    Finally, suppose $I$ is a non-zero $\mc{A}$-invariant normal subgroup of $V$. Choose $0\ne v=(s_1,\dotsc,s_n)\in I$. Since $v\ne 0$,
    we get $s_i\ne 0$ for some $1\le i\le n$. Then, replacing $v$ by $A^{n-i+1}v\in I$, we may assume that $i=1$. Now, repeating the same
    arguments as in the previous paragraph, we reduce to the situation in which $s_2=\dotsb=s_n=0$. Then, we consequently show
    that $S\times\{0\}^{n-1}\s N$ and, again in the same way as before, that $V=S^n\s I$. Hence, $I=V$. Concluding, all assumptions
    of Theorem~\ref{coro2*} are satisfied.
\end{ex}

\begin{ex}\label{ex-sn}
    Consider the symmetric group $\Sym(n)$ of degree $n\ge 5$ with the composition denoted as $+$. The only non-trivial normal subgroup
    of $\Sym(n)$ is the alternating group $\Alt(n)$. Let $X$ be the set of all transpositions in $\Sym(n)$; this is clearly
    a conjugacy class in $\Sym(n)$. Furthermore, if $c=(1,2)\in \Sym(n)$ then $X=\{x-c:x\in X\}+c$ and $\{x-c:x\in X\}$ is a set of
    generators of $\Alt(n)$. To consider $\Sym(n)$ as a skew left brace we need to define a product $\circ$ on $\Sym(n)$ or,
    equivalently, a map $\lambda\colon \Sym(n)\to\Aut(\Sym(n),+)$ satisfying
    \begin{equation}
        \lambda_{a+\lambda_a(b)}=\lambda_a \lambda_b\label{lambdacond}
    \end{equation}
    for all $a,b \in \Sym(n)$ (then $a\circ b=a+\lambda_a(b)$). Any element of $\Sym(n)$ can be written uniquely as $a$ or as $a+c$
    with $a\in \Alt(n)$. Define $\lambda_a=\id$ and $\lambda_{a+c}$ the conjugation by $c$ in $(\Sym(n),+)$, i.e., $\lambda_{a+c}(b)=c+b-c$
    for $b\in \Sym(n)$. With this definition, the condition \eqref{lambdacond} is satisfied. This gives an example of a finite skew
    left brace $B$ of type (2) in Theorem~\ref{CorSummary} with $B^{(2)}$ a non-commutative simple skew left brace. So, we have
    shown the existence of simple solutions of cardinality $|X|=\binom{n}{2}$ for any integer $n\ge 5$. If for example $n=5$ then
    $|X|=10=2\cdot 5 $ and again we have shown the existence of simple solutions of square-free cardinality that are not prime numbers.
    Moreover, considering the trivial skew left brace over $\Sym(n)$ one obtains a simple quandle on the same set $X$, i.e., a simple solution
    of square-free cardinality which is not a prime, see \cite[Example 3.6]{MR1994219}. This is in contrast with the involutive case (see \cite{CO2022})
    where Ced\'o and Okni\'nski have shown that in this case all simple involutive solutions have cardinality which is not
    a square-free number, provided it is not prime. This follows from a much more general result: a finite indecomposable non-degenerate
    involutive solution of the Yang--Baxter equation on a set $X$ of square-free cardinality is a multipermutation solution.
\end{ex}

\begin{ex}
    The same method as in the previous example shows that also the dihedral group $D_{2p}$ of order $2p$, with $p$ a prime number, is a skew
    left brace of type (2) in Theorem~\ref{CorSummary}. Here $X$ again is the set of all elements of order $2$ (see also Example~\ref{ex-ab}).
\end{ex}

\begin{ex}\label{ex-an-pr}
    Assume that $S=\Alt(m)$ is the alternating group of degree $m\ge 5$. Let $V=S^n$ for some $n>1$, and let $C=\free{x}$ be a cyclic
    group of order $2n$. Next, define $A=A'\in\Aut(V)$ by the formula \[A(s_1,\dotsc,s_n)=(s_n,a+s_1-a,s_2,\dotsc,s_{n-1}),\]
    where $a=(1,2)\in \Sym(m)$. Clearly, $o(A)=o(A')=2n$. Now, consider the semi-direct product $D=V\rtimes C$, where the action of $C$
    on $V$ is determined by the automorphism $A$. One easily verifies that $Z(D)=0$, and thus $(B,+)=D/Z(D)\cong D$. In particular,
    $B/V\cong C$ and $k=|B/V|=2n$.

    Further, put $u_0=0$ (hence $\lambda$ is the conjugation by $x$). Then clearly $v_i=u_0-A^iu_0=0$ for any $i\ge 0$.
    Because $A=A'$, we get $[A,A']v=-v_1+v+v_1$ for each $v\in V$ (as both sides are equal to $v$). Next, since $kx=2nx=0$ and
    $v_k=0$, it follows that $A'(kx)=v_k+kx$ (as both sides are equal to zero). Moreover, $A'^{k-1}v_1+\dotsb+A'v_1+v_1=0$.

    Finally, by using a similar method as in Example~\ref{ex1*}, we show that $V=\free{v-Av:v\in V}_+$ and that $V$
    does not have non-trivial $\mc{A}$-invariant (equivalently, $A$-invariant) normal subgroups.
    Concluding, all assumptions of Theorem~\ref{coro2*} are satisfied.
\end{ex}

Recall that any finite group $(G,\circ)$ can be considered as a skew left brace $(G,\circ,\circ)$, the trivial skew left brace
on the group $G$, in particular $G^{(2)}=0$. Note that in this skew left brace $\lambda_a=\id$ and $\rho_a=\sigma_a$ is the conjugation
by $a^{-1}$. Hence, the associated solution is $r_G(a,b)=(b, b^{-1}\circ a \circ b)$ is a left derived solution, called the left
conjugation solution. As mentioned earlier, in such a  trivial skew left brace the ideals are precisely the normal subgroups of $(G,\circ)$.
Of course in general one has several skew left brace structures on $G$. However, as a consequence on Hopf--Galois structures, Byott proves
in \cite{MR2011974} that if $(G,\circ) $ is a non-abelian simple group then there are only two possible skew left brace structures
on $G$, namely the trivial skew left brace $(G,\circ, \circ)$ and the almost trivial skew left brace $(G,\circ^{\op},\circ)$, where
$\circ^{\op}$ is the opposite operation of $\circ$. So, in this case we have $G^{(2)}=G$. The latter corresponds with the non-degenerate
solutions $(X,r)$ with all $\rho_x=\id$ and thus $\sigma_x=\lambda^{-1}_x$ for all $x\in X$. In this case the associated solution is
$r_G(a,b)=(a\circ b \circ a^{-1},a)$, called the right conjugation solution.

\begin{ex}\label{ex:byott}
    Byott in \cite{Byott-IP} exhibits the first class consisting  of an infinite family of simple skew left braces $(B,+,\circ)$
    which are not of abelian type and which do not arise from non-abelian simple groups. For any primes $p$ and $q$ such that $q$
    divides $p^p-1$ but does not divide $p-1$, there are (up to isomorphism) exactly two simple skew left braces of order $p^pq$,
    and with $(B,+)$ and $(B,\circ)$ solvable groups. The first three pairs of such $(p,q)$ and corresponding $n$ are: $(2,3)$ with
    $n=12$, $(3,13)$ with $n=351$, and $(5,11)$ with $n=34\,374$. The first instances of this family can be found in the \GAP\ package
    \emph{YangBaxter} \cite{YangBaxter}, i.e., \texttt{SmallSkewbrace(12,22)} and \texttt{SmallSkewbrace(12,23)}. Both of these skew
    left braces have order $12$ and $(B,+)\cong \Alt(4)\cong(C_2\times C_2)\rtimes C_3$ and $(B,\circ)\cong C_3 \rtimes C_4$.
    
    Byott mentions that there are only three types of groups of order $n=p^pq$. The Sylow $p$-subgroup $P$ or the Sylow $q$-subgroup
    $Q$ has to be normal (and thus invariant) and the three groups are respectively the direct product $P \times Q$ and the semi-direct
    products $P\rtimes Q$ and $Q \rtimes P$, of which he utilizes the latter two. A crucial property of those last two groups is that
    neither contains elements of order $pq$ and the latter does not contain a normal subgroup of order $p^i$ for some $i$. This is
    well-known and follows from the assumptions on $p$ and $q$ as $\GL_p(\mb{F}_p)$ will be the linear group of the smallest dimension
    containing an element of order $q$. To yield a skew left brace structure $B$ one needs that $P$ is normal and elementary abelian
    (hence one may identify $P$ with the additive group of the $\mb{F}_p$-vector  space $V=\mb{F}_p^p$), and the group has normal
    subgroups of order $1$, $p^p$ and $n$ and no subgroups of order $p^iq$ for $1\le i<p$. It is then shown that $(B,+)\cong P\rtimes Q$
    and $(B,\circ)\cong Q\rtimes P$.

    As an application of Theorem~\ref{simpleNL}, we show that these simple skew left braces yield simple non-degenerate solutions
    of size $p^p (q-1)$. To do so, we recall an explicit construction from \cite{Byott-IP}. Therefore, let $J\in\GL_p(\mb{F}_p)$ be
    a Jordan matrix with eigenvalue $1$. Then there exists a matrix $M \in\GL_{p}(\mb{F}_p)$ of order $q$ with $JMJ^{-1}=M^p$.
    Put $V=\mb{F}_p^p$ and consider the natural left action of the subgroup $\free{M}\s\Aut(V)$ on $V$ and the associated semi-direct product
    group $B=V\rtimes\free{M}=\{(v,M^i):v\in V\text{ and }0\le i<q\}$, written additively. Clearly $(B,+)\cong C_p^p\rtimes C_q$.
    Then, according to \cite{Byott-IP}, one can construct a regular subgroup $(B,\circ)$ of the holomorph $\Hol(B,+)=(B,+)\rtimes\Aut(B,+)$
    with generating elements: \[x=((0,M),\id),\quad y_v=((v,I),\gamma_{(-v,I)})\text{ for }v\in V_0,\quad z=((e_p,I),\gamma_{(-e_p,J)}),\]
    where $V_0=\mb{F}_pe_1\oplus\dotsb\oplus\mb{F}_pe_{p-1}$, $\{e_1,\dotsc,e_p\}$ is the standard basis of $V$, and $\gamma_a\in\Aut(B,+)$
    denotes the conjugation by $a\in B$, i.e., $\gamma_a(b)=a+b-a$ for $b\in B$. Then one can prove that the skew left brace $(B,+,\circ)$
    is simple. Denote by $Q$ the cyclic group of order $q$ generated by $x$. We claim that the set $X=\bigcup_{a\in B} \gamma_a(Q)\setminus\{1\}$
    generates $(B,+)$ and $B$ acts transitively on $X$. We first prove the latter. Note that $X$ coincides with the set of elements of order
    $q$ in $(B,+)$. Hence, $X$ is closed under the action of $B$, as all $\lambda_a$ and $\sigma_a$ maps are automorphisms of $(B,+)$.
    To show that the action is transitive, it is sufficient to prove that we can reach, under the action, any element of $Q$ starting from
    $x\in Q$, as $\sigma$ coincides with conjugation. For this, one first notices that
    \begin{align*}
        z+\lambda_z(x)-z &=z+(-e_p+JMJ^{-1}e_p,JMJ^{-1})-z\\
        &=(e_p-e_p+JMJ^{-1}e_p-JMJ^{-1}e_p,M^p)=(0,M^p)=px.
    \end{align*}
    As $p$ is invertible modulo $q$, one then can inductively reach all elements in $Q\setminus\{1\}$. Hence, this shows the transitivity on $X$.
    Rests to show that the set $X$ additively generates $B$. Hence, $X$ coincides with all elements of $B$ that are not of $p$ power order, i.e.,
    the complement of $V\rtimes \{I\}$. Thus, \[ X=\{((v,M^i),\lambda_{(v,M^i)}):v\in V\text{ and }0<i<q\}.\]
    As $(v,I)=(v, M)-(0,M)$, the claim follows and $X$ indeed generates $(B,+)$. Hence, by Theorem~\ref{CorSummary}, the solution restricted
    to $X$ is a simple solution of size $p^p(q-1)$. 
\end{ex}

We end with a general remark on solutions of prime order.

\begin{ex}\label{exsemipri}
    All indecomposable non-degenerate solutions of prime order are simple solutions. 
\end{ex}

This can easily be proven. First, we say that a morphism $f\colon(X,r)\to(Y,s)$ of solutions is a covering map if it is
surjective and all the fibres $f^{-1}(y)$ for $y\in Y$ have the same cardinality. A non-degenerate solution $(X,r)$ is said
to be transitive if $\mc{S}(X,r)$ acts transitively on $X$. By\cite[Lemma 1]{MR3958100} (for non-degenerate involutive solutions)
and by \cite[Lemma 3.2]{MR4388351} (for the general case), if $f\colon(X,r)\to(Y,s)$ is an epimorphism of solutions then $f$
is a covering map. In particular, if $|X|$ is prime then $(X,r)$ is a simple solution.

In \cite{MR1848966} Etingof, Guralnick, and Soloviev have described indecomposable finite non-degenerate solutions $(X,r)$ of the
Yang--Baxter equation that are of prime order. It turns out that then $X$ can be equipped with an abelian group structure $(X,+)$
(and thus we may assume that $X=\mb{Z}_p$) such that the solution $(X,r)$ is affine, i.e., it is of the form
\[r(x,y)=(\alpha(x)+\beta(y)+a,\gamma(x)+\delta(y)+b)\] for some $\alpha,\beta,\gamma,\delta\in\End(X,+)$ and $a,b\in X$.
They are precisely the solutions of the following type built on triples $a,b,c\in X$ with $a,b\ne 0$
such that $ab\ne 1$, and pairs $c_1,c_2\in X$ such that $(c_1,c_2)\ne(0,0)$:
\begin{enumerate}
    \item $(x,y)\mapsto (ay+(1-ab)x+c,bx-a^{-1}c)$,
    \item $(x,y)\mapsto (ay+c,bx+(1-ab)y-bc)$,
    \item $(x,y)\mapsto (y+c_1,x+c_2)$.
\end{enumerate}
The solutions of the third type are the solutions described in Proposition~\ref{SimpleLYU}
and the solutions of the first and second type are solutions determined by a simple skew left brace $B$.

It is interesting to note that all these solutions satisfy $\lambda_x=\lambda_y$ for all $x,y\in X$ or $\rho_x=\rho_y$ for all $x,y\in X$,
and thus are of the type investigated in \cite[Section 8]{CJKVA2023}, where it is shown that the Yang--Baxter algebras
corresponding to such solutions always are left, respectively right, Noetherian.

\section*{Acknowledgement}

The first named author is partially supported by the Fonds voor Wetenschappelijk Onderzoek (Flanders) -- Krediet voor wetenschappelijk
verblijf in Vlaanderen grant V512223N. The third author is supported by National Science Centre grant 2020/39/D/ST1/01852 (Poland).
The fourth author is supported by Fonds Wetenschappelijk Onderzoek Vlaanderen (grant 1229724N).

\bibliographystyle{amsplain}
\bibliography{refs}

\end{document}